\title{Generalized Persistence Algorithm for Decomposing  Multiparameter Persistence Modules}
\author{
    Tamal K. Dey \\
\texttt{tamaldey@purdue.edu} 
\And
Cheng Xin\\
\texttt{xinc@purdue.edu} 
}
\date{Department of Computer Science\\ Purdue University} 
\newtheorem{construction}{Construction}[section]
\theoremstyle{plain}
\newtheorem{theorem}{Theorem}[section]
\newtheorem{proposition}[theorem]{\bf Proposition}
\newtheorem{example}{Example}
\theoremstyle{definition}
\newtheorem{definition}{Definition}[section]
\newtheorem{fact}{Fact}
\newtheorem{remark}{Remark}[section]
\newtheorem*{theorem*}{Theorem}
\newtheorem*{proposition*}{Proposition}
\newtheorem*{lemma*}{Lemma}
\newtheorem*{note*}{Note}
\newtheorem*{remark*}{Remark}
\newtheorem*{notation*}{Notation}
 \newcommand{\propositionofref}{}
 \newcommand{\theoremofref}{}
\newtheorem{corollary}[theorem]{Corollary}
\newtheorem{observation}[theorem]{Observation}
\newcommand{\Int}{\mathbb{Z}}
\newcommand{\Real}{\mathbb{R}}
\newcommand{\N}{\mathbb{N}}
\newcommand{\A}{\mathbf{A}}
\newcommand{\X}{\mathbf{X}}
\newcommand{\Y}{\mathbf{Y}}
\newcommand{\I}{\mathbf{I}}
\newcommand{\ttt}{\mathbf{t}}
\newcommand{\uu}{\mathbf{u}}
\newcommand{\vv}{\mathbf{v}}
\newcommand{\BS}{\mathbf{S}}
\newcommand{\LL}{\mathbf{P}}
\newcommand{\RR}{\mathbf{Q}}
\newcommand{\Id}{\mathbb{1}}
\newcommand*{\rom}[1]{\expandafter\@slowromancap\romannumeral #1@}
\newcommand{\Img}{\mathrm{im\,}}
\newcommand{\dm}{\mathrm{dm}}
\newcommand{\field}[1] {{\mathbb{#1}}}
\newcommand{\col}{{\sf Col}}
\newcommand{\row}{{\sf Row}}
\newcommand{\low}{{\sf Low}}
\newcommand{\lin}{{\sf Lin}}
\newcommand{\B}{\mathcal{B}}
\newcommand{\rowop}{{\sf Rowop}}
\newcommand{\colop}{{\sf Colop}}
\newcommand{\cok}{\mathrm{coker}}
\newcommand{\gr}{\mathrm{gr}}
\newcommand{\dotr}{\mbox{$\boldsymbol{\cdot}$}}
\definecolor{darkred}{rgb}{1, 0.1, 0.3}
\definecolor{darkblue}{rgb}{0.1, 0.1, 1}
\begin{document}

\maketitle
\begin{abstract}
The classical persistence algorithm computes the unique decomposition of a persistence
module implicitly
given by an input simplicial filtration.
Based on matrix reduction, this algorithm is a cornerstone of the emergent area of topological data analysis. Its input is a simplicial filtration defined over the
integers $\mathbb{Z}$ giving rise to a $1$-parameter persistence module. It has been recognized that multiparameter version of persistence modules given by
simplicial filtrations over $d$-dimensional integer grids
$\mathbb{Z}^d$ is equally or perhaps more important in data science applications. However, in the multiparameter setting, one of the main challenges is that topological summaries based on algebraic structure such as decompositions and bottleneck distances
cannot be as efficiently computed as in the $1$-parameter case because there is no known extension of the persistence algorithm to
multiparameter persistence modules. 
We present an efficient algorithm to compute the unique decomposition of a finitely presented persistence module $M$ defined over the multiparameter $\mathbb{Z}^d$. 
The algorithm first assumes that the
module is presented with a set of $N$ generators and relations that are \emph{distinctly graded}.
Based on a generalized matrix reduction technique it runs in $O(N^{2\omega+1})$ time where $\omega<2.373$ is the exponent for matrix multiplication. This is much better than
the well known algorithm called Meataxe which runs in $\tilde{O}(N^{6(d+1)})$ time on such an input. In practice, persistence modules are usually induced by simplicial filtrations. With such an input consisting of $n$ simplices, our algorithm runs in $O(n^{(d-1)(2\omega + 1)})$ time for $d\geq 2$. For the special case of zero dimensional homology, it runs in time $O(n^{2\omega +1})$.


\end{abstract}

\section{Introduction} \label{sec: intro}
Persistence modules defined over a single parameter such as $\mathbb{Z}$ or $\mathbb{R}$ have 
become a central object of study in topological data analysis (TDA). It is an indexed set of vector
spaces connected by linear maps most commonly arising from applying a homology functor to
a simplicial filtration--another well known construct in TDA. Under some mild conditions~\cite{oudot2015persistence},
such a module decomposes uniquely into interval modules called \emph{bars}. These bars or its equivalent 
persistence diagrams encode the input module completely. Naturally, computing these bars 
from an input persistence module efficiently becomes an important endeavor in TDA. Starting with the
persistence algorithm~\cite{edelsbrunner2000topological}, a number of improvements and extensions have been proposed for computing the bar decompositions in
the single parameter case. However, the problem in the multi parameter case 
has not received as much attention. 
Other than some specific cases~\cite{BLO20,cai2020elder,CO20,DX18},
the only known algorithm for the purpose can be derived from the so-called Meataxe algorithm which 
applies to much more general modules than we consider in TDA at the expense of high computational cost.
Sacrificing this generality and still encompassing a large class
of modules that appear in TDA, we can design a much more efficient algorithm.
Specifically, we present an algorithm that can decompose a finitely
presented module (unique decomposition is guaranteed by Krull-Schmidt
theorem~\cite{Atiyah1956}) with
a time complexity that is far better than the Meataxe algorithm though we loose the generality as
the module needs to be \emph{distinctly graded}, that is, 
no two generators and no two relations of the module have the same grade. 
If this condition is not satisfied, a simple observation 
implies that the algorithm
still produces an output that can be viewed as a decomposition of a module close
under the interleaving distance.

For measuring algorithmic efficiency, it is imperative to specify how the input module is presented.
Assuming an index set of size $m$ and vector spaces of dimension $O(s)$, 
a one-parameter persistence module can be presented by a set of $O(s)\times O(s)$ matrices
each representing a linear map $M_i\rightarrow M_{i+1}$ between two consecutive vector spaces $M_i$ and $M_{i+1}$. This input format is costly as it takes $O(ms^2)$
 space ($O(s^2)$-size matrix for each index) and also does not appear to offer any benefit in time complexity for computing the bars. 
An alternative presentation is obtained by considering the persistence module
as a graded module over a polynomial ring $\mathbb{k}[t]$ and presenting it with the so-called \emph{generators} 
$\{g_i\}$ of the module and \emph{relations} $\{\sum_{i}\alpha_ig_i=0\,\mid\,\alpha_i\in\mathbb{k}[t]\}$ 
among them. A presentation matrix encoding the
relations in terms of the generators characterizes the module completely. 
Then, a matrix reduction algorithm akin to the persistence algorithm~\cite{CEM06}
provides the desired decomposition. Figure~\ref{fig:oneparam}
illustrates the advantage of this presentation over the other costly presentation. 
In practice, when the $1$-parameter persistence module is given by an
implicit simplicial filtration, one can apply the matrix reduction algorithm directly on a boundary
matrix rather than first computing a presentation matrix from it and then decomposing it. If there
are $O(N)$ generators (creator simplices) and relations (destructor simplices), the algorithm
runs in $O(N^3)$ time with simple matrix reductions and in $O(N^\omega)$ time with more sophisticated
matrix multiplication techniques where $\omega<2.373$ is the exponent for matrix multiplication.

\begin{figure}[!htb]
        \centering
         \includegraphics[width=\linewidth, page=3]{./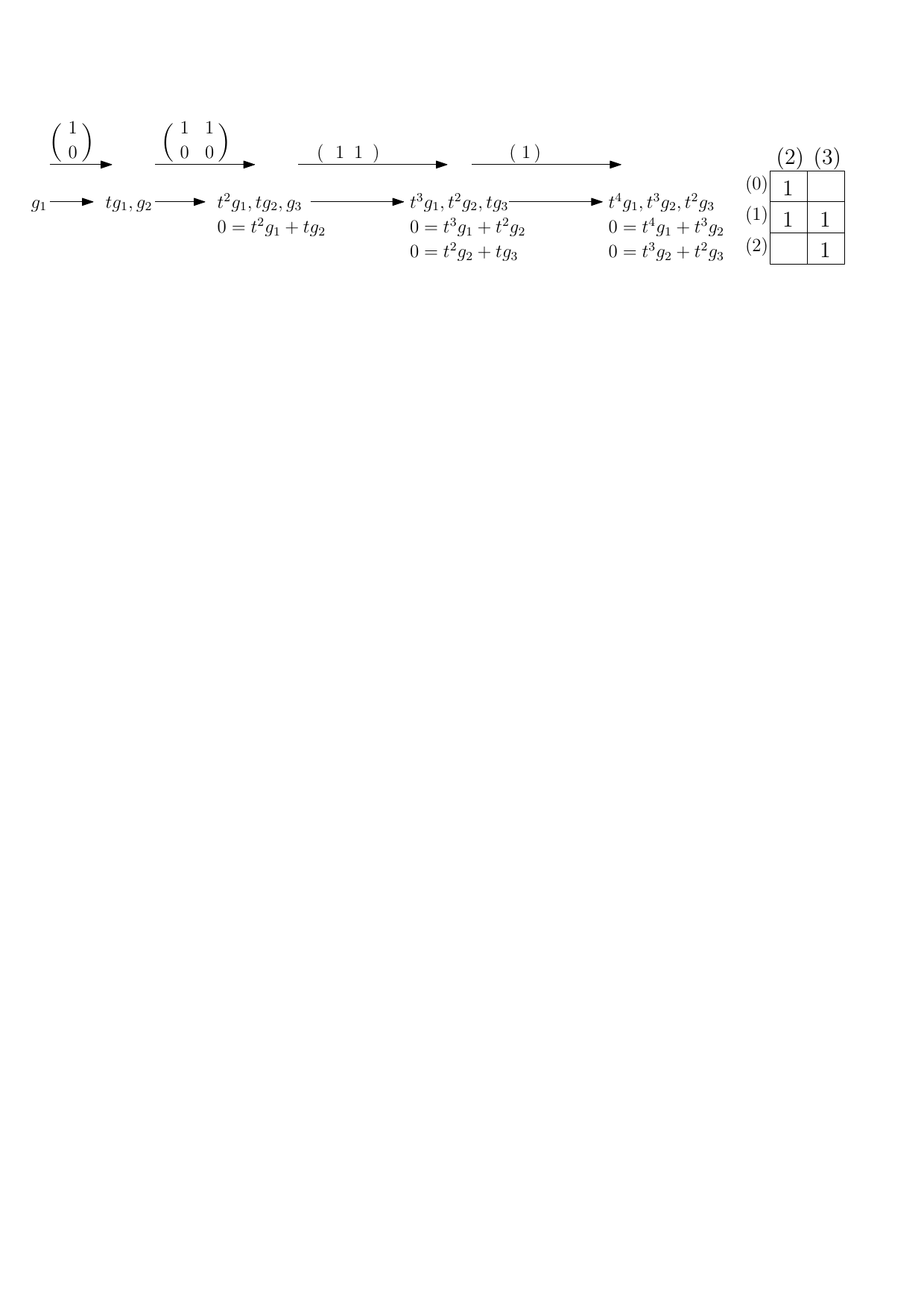}
         \caption{Costly presentation (top) vs. graded presentation (bottom,right). The top chain can be summarized by three generators $g_1, g_2, g_3$ at grades $(0), (1), (2)$ respectively, and two relations $0=t^2g_1+tg_2$, $0=t_2g_2+tg_3$ at grades $(2),(3)$ respectively. The grades of generators and relations are given by the first times they appear in the chain. Finally, these information can be summarized succinctly by the presentation matrix on the right. 
         }\label{fig:oneparam}
\end{figure}

The Meataxe algorithm for multiparameter persistence modules follows the costly approach
analogous to the one in the one-parameter case that expects the presentation of 
each individual linear map explicitly.
In particular, It expects the input $d$-parameter module $M$ over a finite
subset of $\mathbb{Z}^d$ to be given as a large matrix in $\field{k}^{D\times D}$ with entries in a fixed field $\field{k}=\mathbb{F}_q$, where $D$
is the sum of dimensions of vector spaces over all points in $\mathbb{Z}^d$ supporting $M$. The time complexity of the Meataxe algorithm is $O(D^6\log q)$ ~\cite{holtmeataxe}. 
In general, $D$ might be quite large. It is not clear what is the most efficient way to transform an input that specifies generators and relations ( or a simplicial filtration)
to a representation matrix required by the Meataxe algorithm. A naive approach
is to consider the minimal sub-grid in $\mathbb{Z}^d$ that supports the non-trivial maps.
In the worst-case, with $N$ being the total number of generators and relations, one has to consider
$O({N\choose d})=O(N^d)$ grid points in 
$\Int^d$ each with a vector space of dimension $O(N)$.
Therefore, $D=O(N^{d+1})$ 
giving a worst-case time complexity of $O(N^{6(d+1)}\log q)$.
Even allowing approximation, the algorithm~\cite{holt1994testing} runs in  
$O(N^{3(d+1)}\log q)$ time.

We take the alternate approach where the module is treated as a finitely presented graded module over multivariate polynomial ring $R=\field{k}[t_1,\cdots,t_d]$ \cite{Corbet2018} and presented with
a set of generators and relations graded appropriately. The fact that the persistence modules in TDA can be 
modeled as a graded module studied in algebraic geometry and commutative algebra~\cite{eisenbud2005geometry,miller2004combinatorial}
was recognized in~\cite{carlsson2009computing,Carlsson2009,knudson2007refinement} and further studied in  ~\cite{lesnick2015theory,Lesnick_compute_minimal_present_19}. 
Given a presentation matrix encoding relations with generators,
our algorithm computes a diagonalization of the matrix giving a presentation of each module called
\emph{indecompsable} which the input module decomposes into. These indecomposables are the higher
dimensional analogues of the bars. Compared to the one-parameter case,
we have to cross two main barriers for computing the indecomposables. First, we need to allow row operations along with column operations for
reducing the input matrix. In one-parameter case, row operations become redundant because 
column operations already produce the bars. Second, differently from the one-parameter case,
we cannot allow all left-to-right column or bottom-to-top row operations for the matrix
reduction because the parameter space $\mathbb{Z}^d$, $d>1$, unlike $\mathbb{Z}$ only
induces a partial order on these operations. We show how these two
difficulties can be overcome by an incremental approach combined with a linearization trick.
Given a presentation matrix with a total of $N$ generators and relations that are graded
distinctly, our algorithm
runs in $O(N^{2\omega+1})$ time. Surprisingly, 
the complexity does not depend on the parameter $d$.
with time complexity $O(N^{2\omega+1})$ where $\omega<2.373$ is the matrix multiplication exponent.

In practice, we are often given a simplicial filtration instead of a presentation matrix
relating the generators of the induced persistence module. In this case, one has to
compute presentation matrices from the input filtration consisting of $n$ simplices.
For $2$-parameter persistence modules, we can compute a presentation matrix of size $O(n)\times O(n)$ using the algorithm of Lesnick and Wright~\cite{Lesnick_compute_minimal_present_19} in $O(n^3)$ time whereas for $d$-parameter persistence modules, we can adapt an algorithm of Skryzalin~\cite{skryzalin} to compute the presentation in $O(n^{d+1})$ time. 
For $d \geq 2$, this algorithm produces a presentation matrix of dimension $O(n^{d-1})\times O(n^{d-1})$.
 Therefore, with $N=O(n^{d-1})$, the decomposition algorithm takes $O(n^{(d-1)(2\omega+1)})$ time. Combining the costs for computing a presentation and its decomposition, the time complexity of our algorithm becomes $O(n^{(d-1)(2\omega+1)})$ for $d \geq 2$. The time complexity of the Meataxe algorithm remains the same, $O(n^{6(d+1)}\log q)$, with a simplicial filtration of $n$ simplices because the highest dimension of the vector space at each grid point is $O(n)$. Our algorithm is better than the Meataxe algorithm in this case too.


As a generalization of the traditional persistence algorithm, it is expected that our algorithm can be interpreted as computing invariants such as persistence diagrams~\cite{Cohen-Steiner2007} or barcodes~\cite{zomorodian2005computing}.
A roadblock to this goal is that $d$-parameter persistence modules do not have complete discrete  invariants for $d\geq 2$~\cite{Carlsson2009,lesnick2015theory} . Consequently, one needs to invent other invariants suitable for multiparameter persistence modules. A natural way to generalize the invariant in traditional persistent homology would be to consider the decomposition and take the discrete invariants in each indecomposable component. This gives us invariants which are no longer complete but still contain rich information.

We offer two interpretations of the output of our algorithm as two different invariants: \emph{persistent graded Betti numbers} as a generalization of persistence diagrams and \emph{blockcodes} as a generalization of barcodes. The persistent graded Betti numbers are linked to the graded Betti numbers studied in commutative algebra, which is introduced in TDA for multiparameter persistence modules in the work of~\cite{Carlsson2009} and~\cite{knudson2007refinement}. The bigraded Betti numbers are further studied in~\cite{Lesnick_compute_minimal_present_19}. By constructing the free resolution of a persistence module, we can compute its graded Betti numbers and then decompose them according to each indecomposable module, which results into the presistent graded Betti numbers. For each indecomposable, we apply the dimension function which is also known as the Hilbert function in commutative algebra
to summarize the graded Betti numbers for each indecomposalbe module. This constitutes a blockcode for the indecomposable module of the persistence module. The blockcode is a good vehicle for visualizing lower dimensional persistence modules such as $2$- or $3$-parameter persistence modules.

\subsection{Other related work}
Since it is known that there is no complete discrete invariant for multiparameter persistence, researchers have proposed various reasonable summaries that can be computed in practice. Among them the rank invariant proposed by Carlsson et al.~\cite{carlsson2009computing,Carlsson2009} is a popular one. Cerri et al.~\cite{Cerri_Betti_numbers} propose multiparameter persistent Betti number as a stable invariant. Lesnick and Wright introduce the computational tool of fibered barcode in~\cite{2015arXiv151200180L,Lesnick_compute_minimal_present_19} as an interactive vehicle to visualize the one-parameter restriction of biparameter persistence modules.

Another related line of work focuses on defining distances and their stabilities in the space of
multiparameter persistence modules.
The interleaving distance~\cite{Bjerkevik_complexity_intl_dist,Bjerkevik_intl_dist_nphard,bjerkevik2016stability,lesnick2015theory}, and multi-matching distance~\cite{Cerri_Betti_numbers,2018arXiv180106636C} 
are some of the work to mention a few. The relation between interleaving distance and bottleneck distance is studied in~\cite{bjerkevik2016stability,botnan2016algebraic,Emerson_19}. On the computational front, Dey and Xin showed that the bottleneck distance can be computed in polynomial time for the special cases of interval decomposable modules~\cite{DX18} though the general problem is proved to be NP-hard~\cite{Bjerkevik_complexity_intl_dist,Bjerkevik_intl_dist_nphard}.
A recent work of Kerber et al. shows that the matching distance~\cite{kerber2018exact} can be computed efficiently in polynomial time.

\subsection{Outline}
The rest of the paper is organized as follows. In Section 2, we introduce some background materials on persistence modules in the language of graded modules. In Section 3, we introduce the presentation of a persistence module and its presentation matrix which assists in computing the decomposition of persistence modules. The 1-1 correspondence between the decompositions of the persistence module and its presentation is a fundamental fact which is presented as our first main theorem. Based on this correspondence, we observe that two main computational problems need to be solved, (i) computing the decomposition of the presentation matrix, and (ii) constructing a valid presentation.
In Section 4, we handle the first problem by designing an algorithm for computing a decomposition of the presentation matrix. We observe that this problem can be transformed to a what we call generalized matrix reduction problem. Based on that, we propose an algorithm to solve this problem and prove the correctness of our algorithm, and illustrate it with an example. In Section 5, we introduce the strategies for the second problem of computing presentations and analyze the total time complexity for computing presentations together with matrix reduction.
In Section 6, we give two interpretations of the results of our decomposition of persistence modules as two different invariants, persistent graded Betti numbers as a generalization of persistence diagrams and blockcodes as a generalization of barcodes. In Section 7, we conclude with suggesting some future direction.

\section{Persistence modules}
We want to study the \emph{total decomposition} of a persistence module arising from a simplicial filtration in the multiparameter setting.
We first present some preliminary concepts from commutative algebra that lay the foundation
of this work. For more details on multiparameter persistent homology and commutative algebra, we refer the readers 
to~\cite{CM_rings_bruns1998,Carlsson2009,cox2006usingAG,miller2004combinatorial}.
Mainly, we need the concept of \emph{graded modules} because as in~\cite{Carlsson2009} we treat the familiar persistence modules in topological data analysis as graded modules.
Let $R=\field{k}[t_1, \cdots, t_d]$ be the $d$-variate Polynomial ring for some $d\in \Int_+$ with $\field{k}$ being a field. Throughout this paper, we assume coefficients are in $\field{k}$. Hence homology groups are vector spaces. 
\begin{definition}
A $\Int^d$-graded $R$-module (graded module in brief) is an $R$-module $M$ that is a direct sum of $\field{k}$-vector spaces $M_\mathbf{u}$ indexed by $\mathbf{u}\in\Int^d$, i.e. $M= \bigoplus_{\mathbf{u}}M_\mathbf{u}$,
such that the ring action satisfies that $\forall i, \forall\mathbf{u}\in \Int^d$, $t_i\cdot  M_\mathbf{u}\subseteq M_{\mathbf{u}+e_i}$, where $\{e_i\}_{i=1}^d$ is the standard basis in $\Int^d$.
\end{definition}
Another interpretation of graded module is that, for each $\mathbf{u}\in \Int^d$, the action of $t_i$ on $M_\mathbf{u}$ determines a linear map $t_i\bullet :M_\mathbf{u}\rightarrow M_{\mathbf{u}+e_i}$ by $(t_i\bullet)(m)=t_i\cdot m$. So, we can also describe a graded module equivalently as a collection of vectors spaces $\{M_\mathbf{u}\}_{\mathbf{u}\in \Int^d}$ with a collection of linear maps $\{t_i\bullet:M_\mathbf{u}\rightarrow M_{\mathbf{u}+e_i},\forall i, \forall \mathbf{u}\}$
 where
the commutative property
$(t_j\bullet)\circ(t_i\bullet)=(t_i\bullet)\circ(t_j\bullet)$ holds.
The commutative diagram in Figure~\ref{bigrade-fig} shows a graded module for $d=2$, also called a bigraded module.
\begin{figure}[ht!]
\begin{center}
\begin{tikzcd}
                      & \cdots   & \cdots   & \cdots  &   \\
\cdots \arrow[r]      & {M_{0,2}} \arrow[r] \arrow[u]                                       & {M_{1,2}} \arrow[r] \arrow[u]                                          & {M_{2,2}} \arrow[r] \arrow[u] & \cdots \\
\cdots\longrightarrow & {M_{0,1}} \arrow[u, "t_2\bullet" description] \arrow[r, "t_1\bullet"]                                                                                                                                                            & {M_{1,1}} \arrow[u] \arrow[r]                                          & {M_{2,1}} \arrow[u] \arrow[r] & \cdots \\
\cdots \arrow[r]      & {M_{0,0}} \arrow[r, "t_1\bullet" description] \arrow[u, "t_2\bullet" description] \arrow[ru, "t_1\bullet t_2\bullet" description, dashed] \arrow[uu, "t_2^2\bullet", dashed, bend left] \arrow[rr, "t_1^2\bullet"', dashed, bend right] & {M_{1,0}} \arrow[r, "t_1\bullet" description] \arrow[u, "t_2\bullet"'] & {M_{2,0}} \arrow[u] \arrow[r] & \cdots \\
                      & \cdots \arrow[u]                                                                                                                                                                                                                 & \uparrow                                                               & \cdots \arrow[u]              &
\end{tikzcd}
\end{center}
\caption{A graded $2$-parameter module. All sub-diagrams of maps and compositions of maps are commutative.}
\label{bigrade-fig}
\end{figure}

We call a graded module $M$ \emph{finitely generated}
if there exists a finite set of elements $\{g_1, \cdots, g_n\}\subseteq M$ such that each element $m\in M$ can be written as an $R$-linear combination of these elements, i.e. $m=\sum_{i=1}^n \alpha_i g_i$ with $\alpha_i\in R$. 
We call this set $\{g_i\}$ a \emph{generating set} of $M$.  
In this paper, we assume that all modules are finitely generated.
Such modules always admit a minimal generating set. 

\begin{definition}
A graded module morphism, called \emph{morphism} in short, between two modules $M$ and $N$ is defined as an $R$-linear map $f: M\rightarrow N$ preserving grades: $f(M_\mathbf{u})\subseteq N_{\mathbf{u}}, \forall \mathbf{u}\in \Int^d$. Equivalently, it can also be described as a collection of linear maps $\{f_\mathbf{u}: M_\mathbf{u}\rightarrow N_{\mathbf{u}}\}$ which gives the following commutative diagram for each $\mathbf{u}$ and $i$:
\[
\begin{tikzcd}
M_\mathbf{u} \arrow[r, "t_i"] \arrow[d, "f_\mathbf{u}"'] & M_{\mathbf{u}+e_i} \arrow[d, "f_{\mathbf{u}+e_i}"] \\
N_\mathbf{u} \arrow[r, "t_i"]                            & N_{\mathbf{u}+e_i}
\end{tikzcd}
\]
Two graded modules $M,N$ are isomorphic if there exist two morphisms $f:M\rightarrow N$ and $g:N\rightarrow M$ such that $g\circ f$ and $f\circ g$ are identity maps. 

\end{definition}

For a graded module $M$, define a shifted graded module $M_{\rightarrow \mathbf{u}}$ for some $\mathbf{u}\in \Int^d$ by requiring $(M_{\rightarrow \mathbf{u}})_{\mathbf{v}}=M_{\mathbf{v}-\mathbf{u}}$ for each $\mathbf{v}$. 

\begin{definition}[Free module]
We say a graded module is \emph{free} if it is isomorphic to the direct sum of a collection of $R_{\rightarrow \mathbf{u}_j}$'s for some $\mathbf{u}_j$'s in $\Int^d$, denoted as
$\bigoplus_{j}R_{\rightarrow \mathbf{u}_j}$. 
\end{definition}
\begin{definition}[homogeneous]
We say an element $m\in M$ is \emph{homogeneous} if $m\in M_\mathbf{u}$ for some $\mathbf{u}\in \Int^d$. We denote $\gr(m)=\mathbf{u}$ as the \emph{grade} of such homogeneous element.
To emphasize the grade of a homogeneous element, we also write $m^{\gr{(m)}}:=m$.
\end{definition}
A minimal generating set of a free module is called a \emph{basis}.
We usually further require that all the elements in a basis, also called \emph{generators}, are homogeneous.
For a free module $F\simeq \bigoplus_{j}R_{\rightarrow \mathbf{u}_j}$, $\{e_j:j=1,2,\cdots\}$ is a
homogeneous basis of $F$, where $e_j$ indicates the multiplicative identity in $R_{\rightarrow \mathbf{u}_j}$.
The generating set $\{e_j:j=1,2,\cdots\}$ is often referred to as the standard basis of
$\bigoplus_{j}R_{\rightarrow \mathbf{u}_j}=<\{e_j:j=1,2,\cdots\}>$.

A \emph{$d$-parameter persistence module} is a graded $R$-module 
obtained by applying the homology functor with some field $\field{k}$ on a $d$-parameter simplical filtration defined below. In this paper, it can be treated as a synonym for a $\mathbb{Z}^d$-graded $R$-module.
Formally, a ($d$-parameter) \emph{simplicial filtration} is a family of simplicial complexes $\{X_\mathbf{u}\}_{\mathbf{u}\in \Int^d}$ such that for each grade $\mathbf{u}\in \Int^d$ and each $i=1,\cdots, d$, $X_\mathbf{u}\subseteq X_{\mathbf{u}+e_i}$.
We obtain a
simplicial chain complex $(C_{\dotr}(X_\mathbf{u}), \partial_{\dotr})$ 
for each $X_\mathbf{u}$ 
in this simplicial filtration. 
For each chain complex $C_{\dotr}(X_\mathbf{u})$, we have the cycle spaces $Z_p(X_{\mathbf{u}})$'s and boundary spaces $B_p(X_{\mathbf{u}})$'s as kernels and images of boundary maps $\partial_p$'s respectively, and the homology group $H_p(X_{\mathbf{u}})=Z_p(X_{\mathbf{u}})/B_p(X_{\mathbf{u}})$ as the cokernel of the inclusion maps $B_p(X_{\mathbf{u}})\hookrightarrow Z_p(X_{\mathbf{u}})$. Taking $M_{\mathbf u}=H_p(X_{\mathbf u})$ and the linear maps $H_p(X_{\mathbf u})\rightarrow H_p(X_{\mathbf v})$ induced by inclusions $X_{\mathbf u}\subseteq X_{\mathbf v}$
define a $d$-parameter persistence module. More details of this construction
is given later in Section~\ref{sec:compte_presentation}.
For illustration purpose, we describe a working example of a $2$-parameter persistence module induced from a $2$-parameter simplicial filtration shown in Figure~\ref{fig:working_example}. We will use this example throughout to show its induced persistence module and computational results of our algorithm. Later in the context, when we mention an example without reference, we refer to this working example.
\begin{figure}
    \centering
    \includegraphics[page=1, width=0.5\textwidth]{./img/working_example.pdf}
    \caption{The working example on a $2$-parameter simplicial filtrations. Each square box indicates what is the current (filtered) simplical complex at the grade of the box. It has one connected component in $0$th homology groups at grades except $(0,0)$ and $(1,1)$, and has two connected components at grade $(1,1)$.}
    \label{fig:working_example}
\end{figure}

\begin{example}\label{ex:working_example}[Working example]
In practice, 
the most common simplical filtration is obtained from the sublevel sets $\{X_{\mathbf{u}}:=f^{-1}(-\infty, \uu]\}_{\mathbf{u}\in \Real^n}$ of a given (one-critical) filtration function $f:X\rightarrow\Int^d$ on a topological space represented by a simplicial complex $X$. 


For example, let the space $X$ be a simplicial $1$-complex with $0$-simplices consisting of three vertices, blue vertex $v_b$, red vertex $v_r$, and green vertex $v_g$, connected by three edges, blue edge $e_b$, red edge $e_r$, and green edge $e_g$ as $1$-simplices. Assign a filtration function $f:X\rightarrow \Int^2$ as follows:
\begin{align*}
f(v_b)=(0,1), f(v_r)=(1,0), f(v_g)=(1,1)\\
f(e_b)=(1,2), f(e_r)=(1,1), f(e_g)=(2,1)
\end{align*}

Based on this filtration function, the subcomplex $X_\uu$ for each $\uu\in \Int^2$ is illustrated in Figure~\ref{fig:working_example}. Take vertices as basis of each $C_0(X_\uu)$ and edges as basis of $C_1(X_\uu)$. Recall that to emphasize the grades, we denote $v_*^\uu\in C_0(X_\uu)$ to be the basic element in the vector space $C_0(X_\uu)$. All these $v_*^\uu$ are homogeneous element in the graded module $C_0(X)$.
For each vertex $v_*\in \{v_b, v_r, v_g\}$, there is a unique smallest grade $\gr(v_*)\triangleq f(v_*)$ such that $v_*^{\gr(v_*)}$ is a homogeneous basic element in $C_0(X_{\gr(v_*)})$ and $\uu'\ngeq\gr(v_*)\implies v_*\notin C_0(X_{\uu'})$. 
We call this grade $\gr(v_*)$ the \emph{birth time} of $v_*$. 
Then for all $\uu\geq\gr(v_*)$, by the definition of scaler multiplication of graded modules, 
$\ttt^{\uu-\gr(v_*)} v_*^{\gr(v_*)}=v_*^{\uu} \in C_0(X_{\uu})$ 
is the image of $v_*^{\gr(v_*)}$ under the inclusion map
$C_0(X_{\gr(v_*)})\hookrightarrow C_0(X_{\uu})$, 
which is the homogeneous basis element of $C_0(X_{\uu})$ corresponding to the vertex $v_*$. 
Sometime, we omit the upper index by writing $v_*=v_*^{f(v_*)}$ for brevity.
With these conventions, we can see that for each $\uu\in\Int^2$, the vector space $C_0(X_{\uu})$ is generated by all $v_*^{\uu}=t^{\uu-\gr(v_*)}v_*^{\gr(v_*)}$ such that $v_*$ is born before or at $\uu$, which means $\gr(v_*)\leq \uu$.
In fact, $C_0(X)$ is a free module and 
$\{v_b^{\gr(v_b)}, v_r^{\gr(v_r)}, v_g^{\gr(v_g)}\}$ forms a basis of $C_0(X)$.
That means, any element of $C_0(X)$ can be written as a $R$-linear combination of these $v_*^{\gr(v_*)}$'s and all these $v_*^{\gr(v_*)}$'s are linearly independent.

Similarly, for each $e_*\in\{e_b, e_r, e_g\}$,
we have the earliest basic element $e_*^{\gr(e_*)}$ of $C_1(X_{\gr(e_*)})$ for $\gr(e_*)\triangleq f(e_*)$. 
The set $\{e_b^{\gr(e_b}, e_r^{\gr(e_r)}, e_g^{\gr(e_g)}\}$ forms a basis of the free module $C_1(X)$. 
Furthermore, by the commutative property of morphisms, we have for each $\uu\geq\gr(e_*)$,
\begin{equation}
    \partial_{1,\uu}(e_*^{\uu})=
    \partial_{1,\uu}(\ttt^{\uu-\gr(e_*)} e_*^{\gr(e_*)})=
    \ttt^{\uu-\gr(e_*)}\circ\partial_{1,\gr(e_*)}(e_*)=
    \ttt^{\uu-\gr(e_*)}\circ\partial_1(e_*^{\gr(e_*)})
\end{equation}
In fact, as a morphism between two free modules, $\partial_{1}$ is fully determined by $\partial_1{(e_*)}$. Consider, for example, the red edge $e_r$ connecting $v_b$ and $v_r$. With the field chosen to be $\mathbb{F}_2$, one has $\partial_1(e_r^{\gr(e_r)})=\partial_1(e_r^{(1,1)})=v_b^{(1,1)}+v_r^{(1,1)}=\ttt^{(1,0)} v_b^{(0,1)}+\ttt^{(0,1)}v_r^{(1,0)}$. Similar for $\partial_1(e_b^{(1,2)})$ and $\partial_1(e_g^{(2,1)})$.
Therefore, $\partial_{1}$ can be represented as a matrix with entries in $R=\field{k}[\ttt]$ as follows:
\[
    \bordermatrix{[\partial_1]  &e_r^{(1,1)}            &   e_b^{(1,2)}         & e_g^{(2,1)}          \cr
    v_b^{(0,1)} & \mathbf{t}^{(1,0)}    &   \mathbf{t}^{(1,1)}   & 0                    \cr
    v_r^{(1,0)} & \mathbf{t}^{(0,1)}    &   0                   & \mathbf{t}^{(1,1)}    \cr
    v_g^{(1,1)} & 0                     &   \mathbf{t}^{(0,1)}   & \mathbf{t}^{(1,0)}   }
\]

\begin{figure}
    \centering
    \includegraphics[width=0.5\textwidth, page=2]{./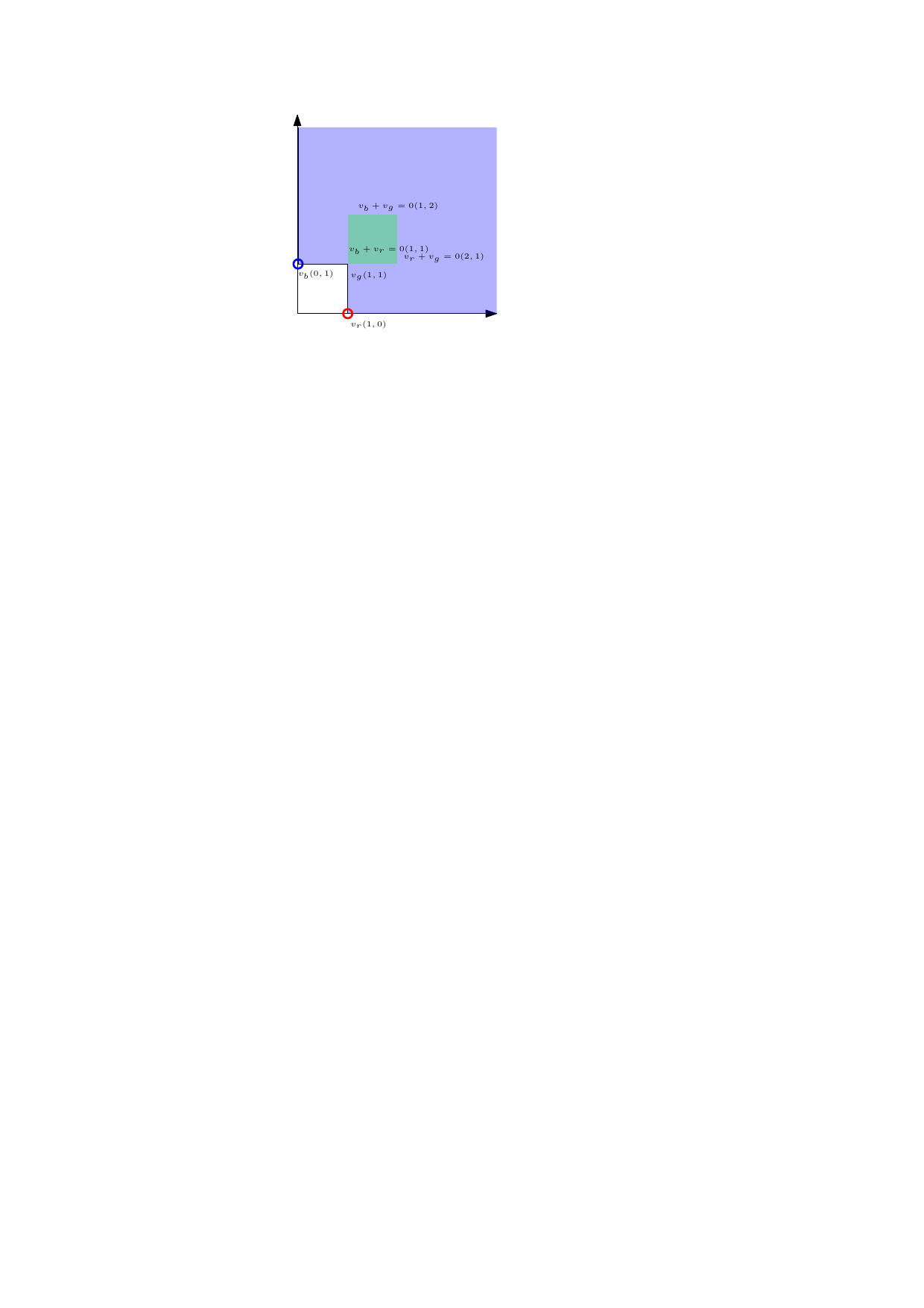}
    \caption{Persistence module whose presentation matrix is $[\partial_1]$ described
    in the working example. 
    }\label{fig:genrel}
\end{figure}

Now consider the $0^{th}$ persistence homology module induced from boundary morphism $\partial_1:C_1(X)\rightarrow C_0(X)$. Note that the $0^{th}$ homology is a space of connected components. For each $\uu$, $H_0(X_\uu)=\frac{Z_0(X_\uu)}{B_0(X_\uu)}=\frac{C_0}{\Img\partial_{1, \uu}}$. With bases of $C_0$ and $C_1$ chosen above, we have the $0^{th}$ persistence homology on grades from $(0,0)$ (bottom-left corner) to $(2,2)$ (top-right corner) described as following diagram (also
illustrated in Figure~\ref{fig:genrel}):

\[
\begin{tikzcd}
    \field{k} \arrow[rr, "1" description]                                        &  & \field{k} \arrow[rr, "1" description]                                        &  & \field{k}                             \\
                                                                         &  &                                                                      &  &                               \\
    \field{k} \arrow[uu, "1" description] \arrow[rr, "{[1,0]^\top}" description] &  & \field{k}^2 \arrow[uu, "{[1,1]}"] \arrow[rr, "{[1,1]}" description]          &  & \field{k} \arrow[uu, "1" description] \\
                                                                         &  &                                                                      &  &                               \\
    0 \arrow[rr, "0" description] \arrow[uu, "0" description]                                              &  & \field{k} \arrow[rr, "1" description] \arrow[uu, "{[1,0]^\top}" description] &  & \field{k} \arrow[uu, "1" description]
\end{tikzcd}
\]

\end{example}


In what follows, we take the liberty of omitting $X$ and $p$ if they are clear from the context. Thus, we may denote $Z_p(X)$, $B_p(X)$, and $H_p(X)$ as $Z$, $B$ and $H$ respectively.

\begin{definition}[decomposition]
For a finitely generated module $M$, we call $M\simeq \bigoplus M^i$ a  \emph{decomposition} of $M$ for some collection of modules $\{M^i\}$.
We say a module $M$ is \emph{indecomposable} if $M\simeq M^1\oplus M^2 \implies M^1=0$ or $M^2=0$.
By the Krull-Schmidt theorem~\cite{Atiyah1956},
there exists an essentially unique (up to permutation and isomorphism) decomposition $M\simeq \bigoplus M^i$ with every $M^i$ being indecomposable. We call it the \emph{total} decomposition of $M$.
\end{definition}
For example, the free module $R$ is generated by $<e_1^{(0,0)}>$ and the free module $R_{\rightarrow({0,1})}\oplus R_{\rightarrow{(1,0)}}$ is generated by $<e_1^{(0,1)}, e_2^{(1,0)}>$. A free module $M$ generated by $<e_j^{\mathbf{u}_j}:j=1,2,\cdots>$ has a (total) decomposition $M \simeq \bigoplus_{j}R_{\rightarrow \mathbf{u}_j}$.

\begin{definition}
Two morphisms $f:M\rightarrow N$ and $f':M'\rightarrow N'$ are isomorphic, denoted as $f\simeq f'$, if there exist isomorphisms $g:M\rightarrow M'$ and $h:N\rightarrow N'$ such that the following diagram commutes:
\begin{center}
\begin{tikzcd}
M \arrow[r, "f"] \arrow[d, " \overset{g}{\simeq}"' ] & N \arrow[d, "\overset{h}{\simeq}" ] \\
M' \arrow[r, "f'"]                                   & N'
\end{tikzcd}
\end{center}
\end{definition}
Essentially, like isomorphic modules, two isomorphic morphisms can be considered the same.
For two morphisms $f_1:M^1\rightarrow N^1$ and $f_2:M^2\rightarrow N^2$, there exists a canonical morphism $g:M^1\oplus M^2 \rightarrow N^1\oplus N^2, g(m_1, m_2)=(f_1 (m_1),\; f_2 (m_2))$, which is essentially uniquely determined by $f_1$ and $f_2$ and is denoted as $f_1\oplus f_2$.
We denote a trivial module by bold $\mathbf{0}$, and a trivial morphism by $\mathbb{0}$.
Analogous to the decomposition of a module, we can also define a decomposition of a morphism.



\begin{definition}
A morphism $f$ is indecomposable if $f\simeq f_1\oplus f_2 \implies f_1$ or $f_2$ is the trivial morphism $\mathbb{0}:\mathbf{0}\rightarrow \mathbf{0}$.
We call $f\simeq \bigoplus f_i$ a decomposition of $f$. If each $f_i$ is indecomposable, we call it a \emph{total} decomposition of $f$.
\end{definition}
Like decompositions of modules, the total decompositions of a morphism is also essentially unique.

\section{Presentation and its decomposition}\label{sec:presentation}
To study total decompositions of persistence modules as graded modules, we borrow the idea of \emph{presentations} of graded modules and build a bridge between decompositions of persistence modules and corresponding presentations. The later ones can be transformed to
a matrix reduction problem with possibly nontrivial constrains which we will introduce in Section~\ref{sec:computing_decomposition}. Our first main result is that there are 1-1 correspondences between persistence modules, presentations, and presentation matrices.
Recall that, by assumption, all modules are finitely generated.
A graded module hence a persistence module accommodates a description called its \emph{presentation} that aids finding its decomposition.
\begin{definition}
A presentation of a graded module $H$ is an exact sequence
\begin{tikzcd}[row sep=small, column sep = small]
F^1 \arrow[r, "f"] & F^0 \arrow[r, two heads] & H \arrow[r] & 0.
\end{tikzcd}
We call $f$ a presentation map. We say a graded module $ H $ is \emph{finitely presented} if there exists a presentation of $H$ with both $F^1$ and $F^0$ being finitely generated.
\end{definition}

It follows from the definition that a presentation of $H$ is determined by the presentation map $f$ where $\cok f \simeq H$.

\begin{remark}\label{rmk:presentation_and_minimal}
    Presentations of a given graded module are not unique. However, there exists an essentially unique (up to isomorphism) presentation $f$ of a graded module in the sense that any presentation $f'$ of that module can be written as $f'\simeq f\oplus f''$ with $\cok f''=0$. We call this unique presentation \emph{the minimal presentation}. See more details of the construction and properties of minimal presentation in Appendix~\ref{construction:appendix:free_resoln}.
\end{remark}

Fixed bases of nonzero free modules $F^1$ and $F^0$ provide a matrix form $[f]$ of the presentation map $f$, which we call a {\em presentation matrix} of $H$. It has entries in $R$.
In the special case that $H$ is a free module with $F^1$ being a zero module, we define the presentation matrix $[f]$ of $H$ to be a \emph{null column matrix} with matrix size $\ell\times 0$ for some $\ell\in \N$.
An important property of a persistence module $H$ is that a decomposition of its presentation $f$ corresponds to
a decomposition of $H$ itself.  The decomposition of
$f$ can be computed by
\emph{diagonalizing} its presentation matrix
$[f]$.
Informally, a diagonalization of a matrix $\mathbf{A}$ is an equivalent matrix $\mathbf{A}'$ in the following form (see formal Definition~\ref{def:diagonalization} later):


\[
\mathbf{A}'=
\left[
\begin{array}{c c c c}
\mathbf{A}_1 & 0 &\cdots & 0 \\
0 & \mathbf{A}_2 &  \cdots & 0\\
\vdots & \vdots & \ddots & \vdots \\
0 & 0 &\cdots & \mathbf{A}_k
\end{array}
\right]
\]

All nonzero entries are in $\mathbf{A}_i$'s and we write $\mathbf{A}\simeq \bigoplus \mathbf{A}_i$. It is not hard to see that for a map $f\simeq \bigoplus f_i$, there is a corresponding diagonalization $[f]\simeq \bigoplus [f_i]$.
With these definitions and
the fact that persistence modules are
graded modules, we have the following theorem that motivates our decomposition algorithm (proof in Appendix~\ref{sec:free_resoln_graded_betti}).


\begin{theorem} \label{thm:decomposition_correspondence}
There are 1-1 correspondences between the following three structures arising from a minimal presentation map $f: F^1\rightarrow F^0$ of a persistence  module $H$, and its presentation matrix $[f]$:
\begin{enumerate}
    \item A decomposition of the persistence module $H\simeq\bigoplus H^i$;
    \item A decomposition of the presentation map $f\simeq \bigoplus f_{i}$
    \item A diagonalization of the presentation matrix $[f]\simeq \bigoplus [f]_{i}$
\end{enumerate}

\end{theorem}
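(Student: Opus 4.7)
The plan is to prove the theorem by establishing three implications forming the chain $(3) \Leftrightarrow (2) \Rightarrow (1) \Rightarrow (2)$, leveraging minimality of the presentation $f$ crucially in the final step.

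First, I would handle the equivalence $(2) \Leftrightarrow (3)$, which is essentially bookkeeping. A diagonalization $[f] \simeq \bigoplus [f]_i$ of the presentation matrix is given by choosing change-of-basis matrices on the free modules $F^1$ and $F^0$ that split their (homogeneous) bases into groups indexed by $i$. Such a basis split induces direct sum decompositions $F^1 \simeq \bigoplus F^1_i$ and $F^0 \simeq \bigoplus F^0_i$ of graded free modules, and the block-diagonal form means precisely that $f$ restricted to $F^1_i$ lands in $F^0_i$, yielding $f \simeq \bigoplus f_i$ with each $f_i: F^1_i \to F^0_i$. The converse is immediate: any decomposition of $f$ as a morphism between free modules determines compatible basis splittings, hence a diagonal block form of $[f]$.

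Next, for $(2) \Rightarrow (1)$ I would apply the fact that the cokernel functor commutes with finite direct sums. Given $f \simeq \bigoplus f_i$, we obtain $H \simeq \cok f \simeq \cok\!\left(\bigoplus f_i\right) \simeq \bigoplus \cok f_i$, producing a decomposition $H \simeq \bigoplus H^i$ with $H^i := \cok f_i$.

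The main step, and the main obstacle, is the reverse direction $(1) \Rightarrow (2)$, where minimality of $f$ is essential (without minimality the implication fails, since one can always add a zero-cokernel summand as in Remark~\ref{rmk:presentation_and_minimal}). Given a decomposition $H \simeq \bigoplus H^i$, I would choose a minimal presentation $f_i: F^1_i \to F^0_i$ of each summand $H^i$ (these exist for finitely presented graded modules over $R$ since $R$ is Noetherian and graded-local with respect to the irrelevant ideal). The candidate decomposition of $f$ is then $\bigoplus f_i : \bigoplus F^1_i \to \bigoplus F^0_i$, whose cokernel is $\bigoplus H^i \simeq H$, so $\bigoplus f_i$ is a presentation of $H$. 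The crucial sub-claim is that this direct sum is itself a minimal presentation: this follows because minimality of each $f_i$ means the generators of $F^0_i$ project to a minimal generating set of $H^i$ and the generators of $F^1_i$ map to a minimal generating set of $\ker(F^0_i \twoheadrightarrow H^i)$, and both properties are preserved under direct sums. By the uniqueness of minimal presentations (Remark~\ref{rmk:presentation_and_minimal}), we then conclude $f \simeq \bigoplus f_i$.

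Finally, to verify that the correspondences are genuinely bijective (up to isomorphism and permutation of summands), I would check compatibility in both directions: starting from $H \simeq \bigoplus H^i$, building $\bigoplus f_i$, and taking cokernels returns the original decomposition; starting from $f \simeq \bigoplus f_i$, taking cokernels and then choosing minimal presentations of each $\cok f_i$ returns a decomposition isomorphic to the original, again by minimality and Krull--Schmidt applied to morphisms. In particular, indecomposability is preserved: $f_i$ is an indecomposable morphism if and only if $H^i = \cok f_i$ is an indecomposable module, so the correspondence restricts to total decompositions. The entire argument is driven by the uniqueness of minimal presentations, and I expect that verifying minimality is preserved by direct sums, together with carefully tracking the isomorphisms in Remark~\ref{rmk:presentation_and_minimal}, is the technical heart of the proof.
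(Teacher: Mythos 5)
Your proposal is correct and follows essentially the same route as the paper: $(2)\leftrightarrow(3)$ via homogeneous basis splittings, $(2)\Rightarrow(1)$ via additivity of cokernels, and $(1)\Rightarrow(2)$ by showing that the direct sum of minimal presentations of the summands is again a minimal presentation and invoking uniqueness of the minimal presentation. The only cosmetic difference is that you verify minimality of the direct sum directly through minimal generating sets (a graded Nakayama argument), whereas the paper's appendix does it by additivity of graded Betti numbers, which amounts to the same fact.
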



    

\begin{remark}\label{rmk:total_decomposition_correspondence}
    In practice, we might be given a presentation which is not necessarily minimal. One way to handle this case is to compute the minimal presentation of the given presentation first. For 2-parameter modules, this can be done by the algorithm in~\cite{Lesnick_compute_minimal_present_19}. The other choice is to compute the decomposition of the given presentation (not necessarily minimal) directly, which is 
    sufficient to get the decomposition of the module thanks to the following proposition 
    (proof at the end of Appendix~\ref{sec:free_resoln_graded_betti}).
    \end{remark}
    
    \begin{proposition}\label{prop:decomposition_correspondence}
    Let $f$ be any presentation of a graded module $H$.
    \begin{enumerate}
        \item For a decomposition of $H\simeq \bigoplus H^i$, there exists a decomposition of $f\simeq \oplus f^i$ so that $\cok f^i=H^i, \forall i $. 
        
        \item The total decomposition of $H$ follows from the total decomposition of $f$.
    \end{enumerate}
    \end{proposition}
    
    \begin{remark}\label{rmk:decompostion_with_trivial}
    
            By Remark~\ref{rmk:presentation_and_minimal}, any presentation $f$ can be written as $f\simeq f^*\oplus f'$ with $f^*$ being the minimal presentation and $\cok f'=0$. Furthermore, $f'$ can be written as $f'\simeq g\oplus h$  where $g$ is an identity map and $h$ is a zero map. The corresponding matrix form is $[f']\simeq [f^*]\oplus [g] \oplus [h]$ with $[g]$ being an identity submatrix and $[h]$ being a collection of zero column vectors. Therefore, one can easily read these trivial parts from the result of matrix diagonalization if it is total, meaning that none of its constituents ($\A_i$s) can be decomposed (diagonalized) further (Definition~\ref{def:diagonalization}).
           See the following diagram for an illustration. 
    
        \renewcommand{\kbldelim}{(}
        \renewcommand{\kbrdelim}{)}
    \[
        [f]=\kbordermatrix{
         &       & f^*     &     \vrule & & g & & \vrule    &h &   \\ \cline{2-10}   
        &       &         &     \vrule & &  & &  \vrule    &  &  \\
        &       &         &     \vrule & &  & &  \vrule    &  &  \\ 
        &       &[f^*]    &     \vrule & &  & &  \vrule    &  &  \\ 
        &       &         &     \vrule & &  & &  \vrule    &  &  \\
        &       &         &     \vrule & &  & &  \vrule    &  &  \\ \cline{2-7}    
        &       &         &     \vrule & 1&  & & \vrule &  &  \\
        &       &         &     \vrule & & 1 & & \vrule &  &  \\
        &       &         &     \vrule & &  & 1& \vrule  &  &  
     }
    \]
    \end{remark}

It follows from Theorem~\ref{thm:decomposition_correspondence} that we have to address the problem of 
\emph{total diagonalization} 
of a presentation matrix $[f]$, $f: F^1\rightarrow F^0$
for computing a total decomposition of the module
$H$ it represents. Each row $r_i$ and column $c_j$ of $[f]$ represent a generator $g_i$ and
a relation $s_j$ respectively where $[f]_{ij}=\alpha_{ij}$ if $s_j=\sum_i \alpha_{ij}g_i$ for $\alpha_{ij}\in R$. We label the rows and columns with the grades of the elements they represent, that is, $\gr(r_i):=\gr(g_i)$ and
$\gr(c_j):=\gr(s_j)$. Furthermore, we can simplify $[f]$ by observing that $\alpha_{ij}$
has the form $\alpha_{ij}= k\cdot t_1^{u_1}t_2^{u_2}$ where $k\in \field{k}$ and
${\bf u}=(u_1,u_2)=\gr(c_j)-\gr(c_i)$. For all
homogeneous transformations of bases, only
the value of $k$ changes in $\alpha_{ij}$. Therefore, we can replace $\alpha_{ij}$ with the
value of $k$ which is either $0$ or $1$ when
$\field{k}=\mathbb{F}_2$. 
See the matrices in Example~\ref{eg:eg0_matrix} given in Section~\ref{sec:simplification}.
With this change, the
matrix $[f]$ becomes a matrix over the field $\mathbb{F}_2$ with the following operations for transformations as summarized below:
\begin{enumerate}
    \item $[f]_{ij}=1$ if and only if $\alpha_{ij}=1$ in the relation
    $s_j=\sum_{i}\alpha_{ij}g_i$.
    \item A column $c_i$ can be added to column $c_j$, denoted as $c_i\rightarrow c_j$, only if $i\neq j \textrm{ and } \gr(c_i)\leq \gr(c_j)$. 
    A row $r_i$ can be added to row $r_j$ denoted $r_i\rightarrow r_j$ only if $i\neq j \textrm{ and } \gr(r_j)\leq \gr(r_i)$.
\end{enumerate}

\section{Computing decomposition}\label{sec:computing_decomposition}
In this section, we present an algorithm for computing a total decomposition of a distinctly graded module, which means that no two columns and no two rows in the presentation matrix have the same grades.
All modules are assumed to be finitely presented and
we take $\field{k}=\field{F}_2$ for simplicity though our method works for any finite field.
We have observed that a total decomposition of a module can be achieved by computing a total decomposition of its presentation $f$. This in turn means a total diagonalization of the presentation matrix $[f]$. Here we formally define some notations about the diagonalization.

Given an $\ell\times m$ matrix $\mathbf{A}=[{\mathbf{A}}_{i,j}]$, with row indices $\row(\mathbf{A})=[\ell]:=\{1,2,\cdots, \ell\}$ and column indices $\col({\mathbf{A}})=[m]:=\{1,2,\cdots, m\}$, 
we define an \emph{index block} $B$ of $\A$ as a pair 
$\big[\row(B),\, \col(B)\big]$ 
with $\row(B)\subseteq\row(\A), \col(B)\subseteq \col(\A)$. 
We say an index pair $(i,j)$ is in $B$ if $i\in \row(B)$ and $j\in \col(B)$, denoted as $(i,j)\in B$. 
We denote a \emph{block} of ${\mathbf{A}}$ on $B$ as the matrix restricted to the index block $B$, i.e. $[{\mathbf{A}}_{i,j}]_{(i,j)\in B}$, denoted as ${\mathbf{A}}|_B$. 
We call $B$ the index of the block ${\mathbf{A}}|_B$. We abuse the notations $\row(\A|_B):=\row(B)$ and $\col(\A|_B):=\col(B)$. For example, the $i$th row 
$r_i={\mathbf{A}}_{i,*}={\mathbf{A}}|_{[\{i\}, \col({\mathbf{A}})]}$ and the $j$th column 
$c_j={\mathbf{A}}_{*,j}={\mathbf{A}}|_{[\row({\mathbf{A}}), \{j\}]}$ are blocks with indices ${\big[\{i\}, \col({\mathbf{A}})]}$ and ${\big[\row({\mathbf{A}}), \{j\}]}$ respectively. Specifically, $\big[\emptyset, \{j\}\big]$ represents an index block of a single column $j$ and $[\{i\}, \emptyset]$ represents an index block of a single row $i$. We call $[\emptyset, \emptyset]$ the empty index block.


A morphism can have presentation matrices in different equivalent forms depending on the bases chosen. 

\begin{definition}
We say a matrix $\A'$ is equivalent to $\A$, denoted as $\A'\sim \A$, if they present the same morphism.  
\end{definition}


\begin{definition}\label{def:diagonalization}
A matrix ${\mathbf{A}}'\sim \mathbf{A}$ is called a \emph{diagonalization of $\A$} with a set of nonempty index blocks $\B=\{B_1, B_2, \cdots, B_k \}$
if rows and columns of $\A$ are partitioned into these blocks, i.e., 
$\row({\mathbf{A}})=\coprod_i \row(B_i)$ and $\col(\mathbf{A})=\coprod_i \col(B_i)$, and all the nonzero entries of $\A'$ have indices in some $B_i$.
We write $\mathbf{A}'= \bigoplus_{B_i\in \B} \A'|_{B_i}$. We say $\mathbf{A}'= \bigoplus_{B_i\in \B} \A'|_{B_i}$ is \emph{total} if no block in this diagonalization can be decomposed further into smaller nonempty blocks. That means, for each block $\A'|_{B_i}$, there is no nontrivial diagonalization. Specifically, when $\A$ is a null column matrix (the presentation matrix of a free module), we say $\A$ is itself a total diagonalization with index blocks $\{[\{i\}, \emptyset]\mid i\in \row(\A)\}$.
\end{definition}

Note that each nonempty matrix $\mathbf{A}$ has a trivial diagonalization with the set of index blocks being $\{(\row(\A), \col(\A))\}$. 
Guaranteed by Krull-Schmidt theorem~\cite{Atiyah1956}, 
all total diagonalizations are unique up to permutations of their rows and columns, and equivalent transformation within each block. The total diagonalization of $\A$ is denoted generically as ${\mathbf{A}}^*$. All total diagonalizaitons of $\mathbf{A}$ have the same set of index blocks, 
denoted as $\B^*$, unique up to permutations of rows and columns.



\subsection{Simplification of presentation matrix}
\label{sec:simplification}
First we want to transform the diagonalization problem to an equivalent problem that involves matrices with a simpler form. The idea is to simplify the presentation matrix to have entries only in $\field{k}$.
There is a correspondence between diagonalizations of the original presentation matrix and certain constrained diagonalizations of the corresponding transformed $\field{k}$-matrix under this subset of basic operations.

Inspired by the ideas from \cite{Carlsson2009}, we first make some observations about the homogeneous property of presentation maps and presentation matrices.
Equivalent matrices actually represent
isomorphic presentations $f'\simeq f$ that admit commutative diagram,
\[
\begin{tikzcd}
F^1 \arrow[r, "f"] \arrow[d, " \overset{h^1}{\simeq}"'] & F^0 \arrow[d, "\overset{h^0}{\simeq}"] \\
F^1 \arrow[r, "f'"] & F^0
\end{tikzcd}
\]
where $h^1$ and $h^0$ are endomorphisms on $F^1$ and $F^0$ respectively. The endomorphisms are realized by basis changes between corresponding presentation matrices $[f]\simeq [f']$.
Since all morphisms between graded modules are required to be homogeneous by definition, we can use homogeneous bases (all the basis elements chosen are homogeneous elements\footnote{Recall that an element $m\in M$ is homogeneous with grade $\gr(m)=\mathbf{u}$ for some $\mathbf{u}\in \Int^d$ if $m\in M_\mathbf{u}$.}) for $F^0$ and $F^1$ to represent matrices, say $F^0=<g_1, \cdots, g_n>$ and $F^1=<s_1, \cdots, s_m>$.
Therefore, for simplicity we can consider only equivalent presentation matrices under homogeneous basis changes. Each entry $[f]_{i,j}$ is also homogeneous. That means, $[f]_{i,j}=t^{\mathbf{u}}$ with $\uu=\gr(s_j)-\gr(g_i)$. We call such presentation matrix \emph{homogeneous presentation matrix}.

For example, given $F^0=<g_1^{(1,1)}, g_2^{(2,2)}>$,
the basis change $g_2^{(2,2)}\leftarrow g_2^{(2,2)}+g_1^{(1,1)}$ is not homogeneous since $g_2^{(2,2)}+g_1^{(1,1)}$ is no longer a homogeneous element.
However, $g_2^{(2,2)}\leftarrow g_2^{(2,2)}+\mathbf{t}^{(1,1)} g_1^{(1,1)}$ is a homogeneous change with $\gr(g_2^{(2,2)}+\mathbf{t}^{(1,1)} g_1^{(1,1)})=\gr(g_2^{(2,2)})= (2,2)$, which results in a new homogeneous basis,
$\{g_1^{(1,1)}, g_2^{(2,2)}+\mathbf{t}^{(1,1)} g_1^{(1,1)}\}$.
Homogeneous basis changes always result in homogeneous bases.

\vspace{0.1in}
\noindent
\textbf{Notation.}
Let $[f]$ be a homogeneous presentation matrix of $f:F^1\rightarrow F^0$ with bases $F^0=<g_1, \cdots, g_n>$ and $F^1=<s_1, \cdots, s_m>$.
We extend the notation of grading to every row $r_i$ and every column $c_j$ from the basis elements $g_i$ and $s_j$ they represent respectively, that is, $\gr(r_i):=\gr(g_i)$ and $\gr(c_j):=\gr(s_j)$. We define a partial order $\leq_{\gr}$ on rows $\{r_i\}$ by asserting $r_i\leq_{\gr} r_j$ iff $\gr(r_i)\leq \gr(r_j)$. Similarly, we define a partial order on columns $\{c_j\}$.

For such a homogeneous presentation matrix $[f]$, we have the following observations:
\begin{enumerate}
    \item  $\gr([f]_{i,j})=\gr(c_j)-\gr(r_i)$
    \item  a nonzero entry $[f]_{i,j}$ can only be zeroed out by  column operations from columns $c_k\leq_{\gr} c_j$ or by row operations from rows $r_\ell\geq_\gr r_i$.
\end{enumerate}


Observation (2) indicates which subset of column and row operations is sufficient to zero out the entry $[f]_{i,j}$. We restate the diagonalization problem as follows:

Given an $n\times m$ homogeneous presentation matrix ${\mathbf{A}}=[f]$ consisting of entries in $\field{k}[t_1,\cdots, t_d]$ with grading on rows and columns, find a total diagonalization of $\mathbf{A}$ under the following admissible row and column operations:

\begin{itemize}
    \item multiply a row or column by nonzero $\alpha \in \field{k}$; (For $\field{k}=\field{F}_2$, we can ignore these operations).
    \item for two rows $r_i, r_j$ with $j\neq i$ and $r_j\leq_{\gr} r_i$, set $r_j\leftarrow r_j+\mathbf{t}^{\mathbf{u}}\cdot r_i$ where $\mathbf{u}=\gr(r_i)-\gr(r_j)$
    \item for two columns $c_i, c_j$ with $j\neq i$ and $c_i \leq_{\gr} c_j$, set $c_j\leftarrow c_j+\mathbf{t}^{\mathbf{v}}\cdot c_i$ where $\mathbf{v}=\gr(c_j)-\gr(c_i)$ 
\end{itemize}

The above operations realize all possible homogeneous basis changes. That means, any homogeneous presentation matrix can be realized by a combination of the above operations.

In fact, the values of nonzero entries in the matrix are redundant under the homogeneous property $\gr(\A_{i,j})=\gr(c_j)-\gr(r_i)$ given by observation (1). 
So, we can further simplify the matrix by replacing all the nonzero entries with 
their $\field{k}$-coefficients. For example, we can replace $2\dotr\mathbf{t}^\uu$ with $2$. What really matters are the partial orders defined by the grading of rows and columns.
With our assumption of $\field{k}=\field{F}_2$, all nonzero entries are replaced with $1$.
Based on above observations, we further simplify the diagonalization problem to be the one as follows.

Given a $\field{k}$-valued matrix $\mathbf{A}$ with a partial order on rows and columns, find a total diagonalization $\mathbf{A}^*\sim \mathbf{A}$ with the following admissible operations:
\begin{itemize}
    \item multiply a row or column by nonzero $\alpha \in \field{k}$; (For $\field{k}=\field{F}_2$, we can ignore these operations).
    \item Add $c_i$ to $c_j$ only if $j\neq i \mbox{ and } \gr(c_i) \leq \gr(c_j)$; denoted as $c_i\rightarrow c_j$.
    \item Add $r_k$ to $r_l$ only if $l\neq k \mbox{ and } \gr(r_\ell)\leq \gr(r_k)$; denoted as $r_k\rightarrow r_l$.
\end{itemize}

The assumption of $\field{k}=\field{F}_2$ allows us to 
ignore the first set of multiplication operations on the binary matrix obtained after transformation. Also, with the assumption of distinct grading, the second two sets of admissible operations become:
\begin{itemize}
    \item Add column $c_i$ to column $c_j$, denoted as $c_i\rightarrow c_j$, only if $\gr(c_i) < \gr(c_j)$. 
    \item Add row $r_i$ to row $r_j$, denoted $r_i\rightarrow r_j$, only if $\gr(r_i) > \gr(r_j)$.
\end{itemize}

We denote the set of all admissible column and row operations as
\begin{align*}
  \colop=&\{(i,j)\mid c_i\rightarrow c_j \mbox{ is an admissible column operation}\},\\
  \rowop=&\{(k,l)\mid r_k\rightarrow r_l \textrm{ is an admissible row operation}\}.
\end{align*}
Under the assumption that no two columns nor rows have same grades,
$\colop$ and $\rowop$ are closed under transitive relation.

\begin{proposition}\label{prop:transitive relation}
$(i,j), (j,k)\in \colop \ (\rowop) \implies (i,k)\in \colop \ (\rowop)$.
\end{proposition}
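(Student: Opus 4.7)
The plan is to unpack the definitions of $\colop$ and $\rowop$ and invoke transitivity of the partial order $\leq$ on $\Int^d$, together with the standing assumption that no two columns (respectively, no two rows) share the same grade. Both cases are symmetric, so I would treat the column case in detail and then observe that the row case follows by the same argument with the order on grades reversed.

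For the column case, given $(i,j), (j,k)\in \colop$, unfolding the definition gives $i\neq j$, $j\neq k$, $\gr(c_i)\leq \gr(c_j)$, and $\gr(c_j)\leq \gr(c_k)$. Transitivity of $\leq$ on $\Int^d$ immediately yields the grade inequality $\gr(c_i)\leq \gr(c_k)$, which is the bulk of what is required for $(i,k)\in\colop$. The only remaining obligation is to verify the distinctness condition $i\neq k$.

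Here the distinct-grades hypothesis is essential. If $i=k$, then $\gr(c_i)=\gr(c_k)$, and the chain $\gr(c_i)\leq \gr(c_j)\leq \gr(c_k)=\gr(c_i)$ combined with antisymmetry of $\leq$ would force $\gr(c_j)=\gr(c_i)$, contradicting the uniqueness of column grades since $j\neq i$. Hence $i\neq k$, and therefore $(i,k)\in\colop$. The row case proceeds verbatim: $(k,\ell)\in\rowop$ requires $\gr(r_\ell)\leq\gr(r_k)$, so transitivity is applied to the reversed order, but the distinct-grades step is identical.

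I do not anticipate any real obstacle; the proposition is essentially a short syllogism. The one subtlety worth flagging is that the distinct-grades assumption is not a convenience but a genuine necessity: without it, one could have three columns with $\gr(c_i)=\gr(c_j)=\gr(c_k)$ and $i=k\neq j$, yielding $(i,j),(j,i)\in\colop$ yet $(i,i)\notin\colop$ because the admissibility condition explicitly forbids $i=j$. Thus any proof must use the distinct-grades hypothesis precisely to rule out the pathological case $i=k$ after transitivity has been applied to the grades.
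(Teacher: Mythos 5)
Your argument is correct and is exactly the reasoning the paper leaves implicit (it states the proposition without proof, conditioned on the no-repeated-grades assumption): transitivity of $\leq$ on $\Int^d$ gives the grade inequality, and the distinct-grades hypothesis together with antisymmetry rules out $i=k$. Your remark that the hypothesis is genuinely needed for the distinctness condition is a nice observation consistent with the paper's placement of the assumption immediately before the proposition.
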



Given a solution of the diagonalization problem in the simplified form, one can reconstruct a solution of the original problem on the presentation matrix by reversing the above process of simplification. We will illustrate it by running our algorithm on the working example~\ref{ex:working_example} at the end of this section.
The matrix reduction we employ for diagonalization may be viewed as a \emph{generalized matrix reduction} because the matrix is reduced under constrained operations $\colop$ and $\rowop$ which might be a nontrivial subset of all basic operations.
\begin{remark}
There are two extreme but trivial cases: (i) there are no $\leq_\gr$-comparable pair of rows and columns. In this case, $\colop=\rowop=\emptyset$ and the original matrix is a trivial solution. (ii) All pairs of rows and all pairs of columns are $\leq_\gr$-comparable. Or equivalently, both $\colop$ and $\rowop$ are totally ordered. In this case,
one can apply traditional matrix reduction algorithm to reduce the matrix to a diagonal matrix with all nonzero blocks being $1\times 1$ minors.
This is also the case for traditional 1-parameter persistence module if one further applies row reduction after column reduction. Note that row reductions are not necessary for reading out persistence information because it essentially does not change the persistence information. However, in multiparameter cases, both column and row reductions are necessary to obtain a diagonalization from which the persistence information can be read. From this view-point, our algorithm can be thought of as a generalization of the traditional persistence algorithm.
\end{remark}

\begin{example}\label{eg:eg0_matrix}
Consider our working example~\ref{ex:working_example}. One can see later in Section~\ref{sec:compte_presentation} (Case 1) that the presentation matrix of this example can be chosen to be the same as the matrix of the boundary morphism $\partial_1: C_1\rightarrow C_0$. With fixed bases
$C_0 = <v_b^{(0,1)}, v_r^{(1,0)}, v_g^{(1,1)}>$ and $C_1=<e_r^{(1,1)}, e_b^{(1,2)}, e_g^{(2,1)}>$,
this presentation matrix $[\partial_1]$ and the corresponding binary matrix $\A$ can be written as follows (recall that superscripts indicate the grades) :
\[
    \bordermatrix{[\partial_1]  &e_r^{(1,1)}            &   e_b^{(1,2)}         & e_g^{(2,1)}          \cr
    v_b^{(0,1)} & \mathbf{t}^{(1,0)}    &   \mathbf{t}^{(1,1)}   & 0                    \cr
    v_r^{(1,0)} & \mathbf{t}^{(0,1)}    &   0                   & \mathbf{t}^{(1,1)}    \cr
    v_g^{(1,1)} & 0                     &   \mathbf{t}^{(0,1)}   & \mathbf{t}^{(1,0)}   }
    \longrightarrow
    \bordermatrix{\mathbf{A}  &c_1^{(1,1)}            &   c_2^{(1,2)}         &  c_3^{(2 ,1)}          \cr
    r_1^{(0,1)} & 1    &   1 & 0                    \cr
    r_2^{(1,0)} & 1    &   0                   & 1    \cr
    r_3^{(1,1)} & 0                     &   1   & 1   }
\]
Four admissible operations are:
$r_3\rightarrow r_1, r_3\rightarrow r_2, c_1\rightarrow c_2, c_1\rightarrow c_3$.
\end{example}


\subsection{Total diagonalization algorithm}

Recall that we assume distinct grading, i.e., no two columns nor two rows have same grades.
We make some comments on the output of our algorithm without this assumption later in the conclusion.


Let $\mathbf{A}$ be the presentation matrix whose total diagonalization we are looking for.
We order the rows and columns of the matrix $\mathbf{A}$ according to any topological order that extends the partial  order on the grades to a total order, e.g., dictionary order. We fix the indices $\row(\A)=\{1,2,\cdots, \ell\}$ and $\col(\A)=\{1,2,\cdots, m\}$ according to this order. With this ordering, observe that, for each admissible column operation $c_i\rightarrow c_j$, we have $i< j$, and for each admissible row operation $r_l\rightarrow r_k$, we have $l> k$.


For any column $c_t$, let $\mathbf{A}_{\leq t}:=\mathbf{A}|_C$ denote the left submatrix on $C=\big[\row(\mathbf{A}), \{j\in \col(\mathbf{A})\mid j\leq t\}\big]$ and $\mathbf{A}_{<t}$ denote its stricter version obtained by excluding column $c_t$ from $\mathbf{A}_{\leq t}$. Our algorithm starts with the finest decomposition and coarsens it as necessary. The main steps of our algorithm runs as follows (see Figure~\ref{Fig:incremental1} for an illustration): 

\begin{figure}[!htb]
        \centering
         \includegraphics[width=\linewidth, page=3]{./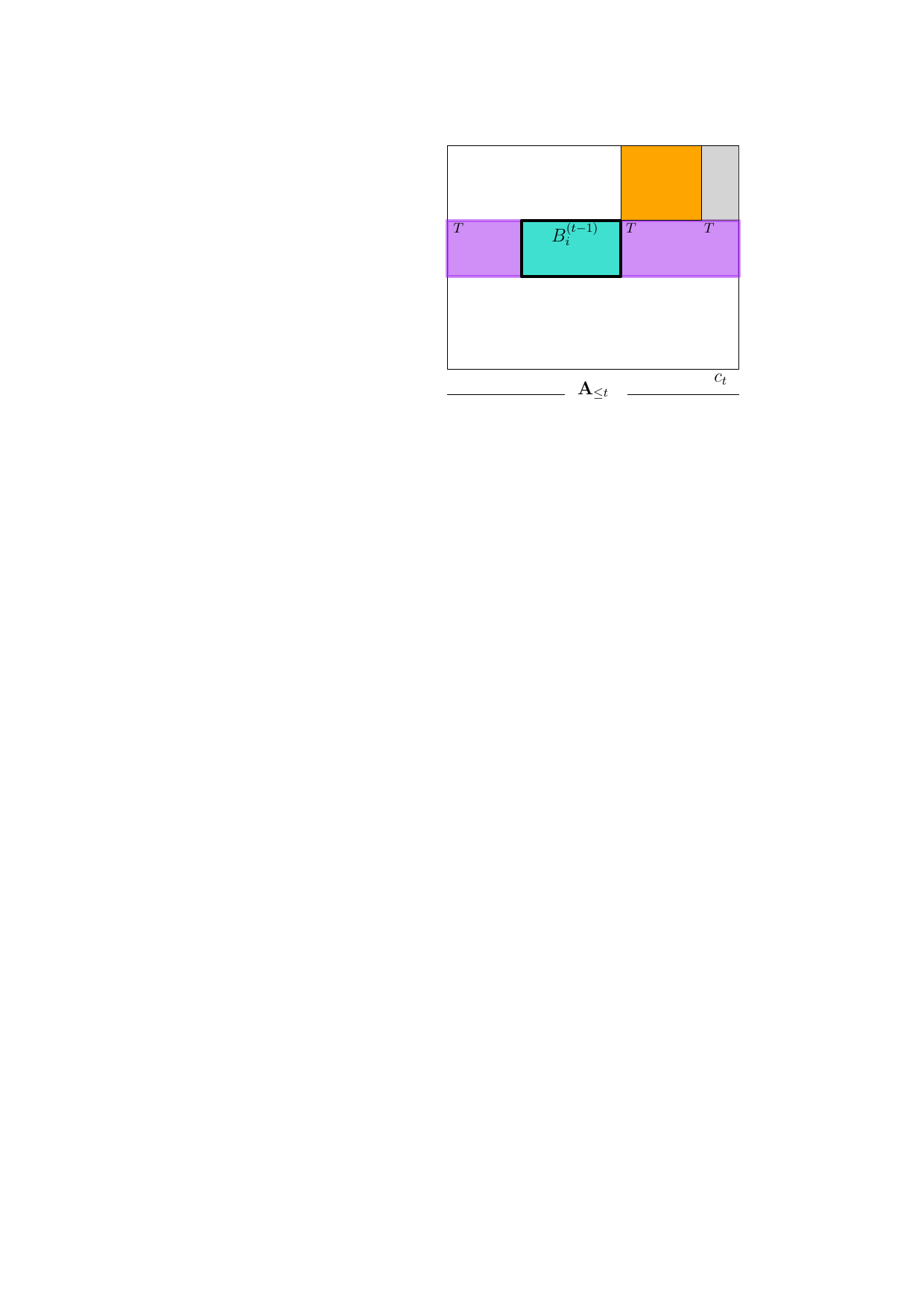}
         \caption{
            (left) $\A$ at the beginning of iteration $t$ with $\A_{<t}$ being totally diagonalized with three index blocks $\B^{(t-1)}=\{B_1^{(t-1)}, B_2^{(t-1)},B_3^{(t-1)}\}$.
            (right) A sub-column update step:  
            $c_t|_{\row B_1^{(t-1)}}$ has already been reduced to zero. So, $B_1^{(t)}=B_1^{(t-1)}$ is added into $\B^{(t)}$. White regions including $c_t|_{\row B_1^{(t-1)}}$ must be preserved afterward. 
            Now for $i=2$, we attempt to reduce purple sub-column $c_t|_{\row B_2^{(t-1)}}$.
            We extend it to block on ${T}:=\big[\row(B_2^{(t-1)}),\,  (\col(A_{\leq t})\setminus \col(B_2^{(t-1)}))\big]$ and try to reduce it in {\sc{BlockReduce}}.
         }\label{Fig:incremental1}
\end{figure}

\begin{enumerate}
\item[0.]{\bf Initialization}: 
Initialize the collection of index blocks $\mathcal{B}^{(0)}:=\{B_i^{(0)}:=\big[\{i\},\, \emptyset\big]\mid i\in \row(\A)\}$, for the total diagonalization of null column matrix $\A_{\leq 0}$.

\item[1.]{\bf Main iteration}: Process $\A$ from left to right incrementally by introducing a column $c_t$
and considering left submatrices $\mathbf{A}_{\leq t}$ for $t=1,2,\cdots, m$. We update and maintain the collection of index blocks $\B^{(t)}\leftarrow\{B_i^{(t)}\}$ for the current submatrix $\A_{\leq t}$ in each iteration by using column and block updates stated below. Here we use upper index $(\cdot)^{(t)}$ to emphasize the iteration $t$. 

\item[2.]{\bf Sub-column update}: 
Partition the column $c_t$ into sub-columns $c_t|_{\row B_i^{(t-1)}}:=\A_{[\row(B_i^{(t-1)}),\, \{t\}]}$, one for the set of rows $\row(B_i^{(t-1)})$ for each block from the previous iteration.
We process each such sub-column $c_t|_{\row B_i^{(t-1)}}$ one by one, checking whether there exists a sequence of admissible operations that are able to reduce the sub-column to zero while {\emph{preserving the prior}}, 
according to the definition below.
\begin{definition}
A \emph{prior} with respect to a sub-column $c_t|_{\row B_i^{(t-1)}}$ is defined to be the left submatrix $A_{<t}$ and sub-columns $c_t|_{\row B_j^{(t-1)}}$ for all $j<i$.
\end{definition}
That is to say, the operations that preserve prior, together \emph{change neither $\A_{<t}$ nor other sub-columns $c_t|_{\row B_j^{(t-1)}}$ for all $j<i$.} If such operations exist, we apply them on the current $\A$ to get an equivalent matrix with the sub-column $c_t|_{\row B_i^{(t-1)}}$ being zeroed out and we set $B_i^{(t)}\leftarrow B_i^{(t-1)}$. Otherwise, we leave the matrix $\A$ unchanged and add the column index $t$ to those of
$B_i^{(t-1)}$, i.e., we set $B_i^{(t)}\leftarrow \big[\row(B_i^{(t-1)}), \col(B_i^{(t-1)})\cup \{t\}\big]$. 
After processing every sub-column $c_t|_{\row B_i^{(t-1)}}$
one by one, all index blocks $B_i^{(t)}$ containing column index $t$ are merged into one single index block.
At the end of iteration $t$, we get an equivalent matrix $\A$ with $\mathbf{A}_{\leq t}$ being totally diagonalized with index blocks $\B^{(t)}$.

\item[3.]{\bf Block reduce}: To update the entries of each sub-column of $c_t$ described in 2, we propose a block reduction algorithm {\sc BlockReduce} to compute the correct entries. Given
$T:=\big[\row(B_i^{(t-1)}),\, (\col(\A_{\leq t})\setminus\col(B_i^{(t-1)}))\big]$,
this routine checks whether the block $T$
 can be zeroed out by some collection of admissible operations. If so, $c_t$ does not join the block $B_i^{(t)}$ and $\A$ is updated with these operations.
\end{enumerate}


For two index blocks $B_1,B_2$, we denote the merging $B_1\oplus B_2$ of these two index blocks as an index block $\big[\row({B_1})\cup \row({B_2}),\, \col({B_1})\cup \col({B_2})\big]$.
In the following algorithm, we treat the given matrix $\A$ to be a global variable which can be visited and modified anywhere by every subroutines called. That means, every time we update values in $\A$ by some operations, these operations are applied to the latest $\A$.

\begin{algorithm}[H]
 \caption{{\sc TotDiagonalize}($\mathbf{A}$)}
 \KwIn{$\mathbf{A}=$ input matrix treated as a global variable whose columns and rows are totally ordered respecting some fixed partial order given by the grading.}
\SetAlgoLined
\KwResult{a total diagonalization $\A^*$ with index blocks $\mathcal{B}^{*}$}

 $\B^{(0)}\leftarrow \{B_i^{(0)}:=\big[\{i\}, \emptyset\big] \mid {i\in \row(\mathbf{A})}\}$\; 

 \For{ $t\gets 1$ to $m:=|\col(\mathbf{A})|$
 }{
    $B_0^{(t)}\leftarrow\big[\emptyset, \{t\}\big]$\;
    \For{each $B_i^{(t-1)}\in \B^{(t-1)}$}{
        $T:=\big[\row({B_i^{(t-1)}}),\, \col(\mathbf{A}_{\leq t})\setminus\col({B_i^{(t-1)}})\big]$\;
        \If{{\sc BlockReduce} $(T)$== {\tt false}}{
            
            $B_i^{(t)}\leftarrow B_i^{(t-1)}\oplus B_0^{(t)}$; \tcp*[f]{update $B_i$ by appending $t$}\\
        }
        \Else{
            $B_i^{(t)}\leftarrow B_i^{(t-1)}$; \tcp*[f]{$B_i$ remains unchanged}\\
        }
    }
    $\B^{(t)}\leftarrow \{B_i^{(t)}\}$ with all $B_i^{(t)}$ containing $t$ merged as one block.\\
    \tcp{$\A$ and $c_t$ are updated in {\sc BlockReduce} whenever it returns {\tt true} }
}
\KwRet{$(\A, \B^{(m)})$\;}
\end{algorithm}

\begin{remark}
 Our algorithm does not require the input presentation matrix to be minimal. As indicated in Remark~\ref{rmk:decompostion_with_trivial}, the trivial parts result in either identity blocks or single column blocks like $\big[\emptyset, \{j\}\big]$. A single column block corresponds to a zero morphism and is not merged with any other blocks. Therefore, $c_j$ is a zero column. For a single row block $\big[\{i\},\, \emptyset\big]$ which is not merged with any other blocks, $r_i$ is a zero row vector. It represents a free indecomposable submodule in the total decomposition of the input persistence module.
\end{remark}

We first prove the correctness of {\sc TotDiagonalize} assuming that {\sc BlockReduce} routine works
as claimed, namely, it
checks if a sub-column of the current column $c_t$ can be zeroed out while preserving the prior,
that is, without changing the left submatrix from
the previous iteration and also the other sub-columns of $c_t$ that have already been zeroed out.
\begin{proposition}\label{prop:alg_correctness}
At the end of each iteration $t$, $\mathbf{A}_{\leq t}$ is a total diagonalization. 
\label{main-iter}
\end{proposition}
\begin{proof}

    We prove it by induction on $t$. For the base case $t=0$, it follows trivially by definition.
    Now assume $\A^{(t-1)}$ is the matrix we get at the end of iteration $(t-1)$ with $\A^{(t-1)}_{\leq t-1}$ totally diagonalized. That means, $\A^{(t-1)}_{\leq t-1}=\A_{\leq t-1}^*$ where $\A=\A^{(0)}$ is the original given matrix. 
    For contradiction, assume at the end of iteration $t$, the matrix we get, $\A^{(t)}$, has left submatrix $\A^{(t)}_{\leq t}$ which is not totally diagonalized. That means, some index block $B\in\B^{(t)}$ can be decomposed further. 
    Observe that such $B$ must contain $t$ because all other index blocks (not containing $t$)
    in $\B^{(t)}$ are also in $\B^{(t-1)}$ which cannot be decomposed further by our inductive assumption. We denote this index block containing $t$ as $B_t$. Let $\A'$ be the equivalent matrix of $\A^{(t)}$ such that $\A'_{\leq t}$ is a total diagonalization with index blocks $\B'$. 
    Let $F$ be an equivalent transformation from $\A^{(t)}$ to $\A'$, which decomposes $B_t$ into at least two distinct index blocks of $\B'$, say $B_0, B_1, \cdots$. Only one of them contains $t$, say $B_0$. Then $B_1$ consists of only indices that are from $\A_{\leq t-1}$, 
    which means $B_1$ equals some index block $B_i\in \B^{(t-1)}$. Therefore, the transformation $F$ gives a sequence of admissible operations which can reduce the sub-column $c_t|_{\row(B_i)}$ to zero in $\A^{(t)}$. 
    Note that we just use $F$ to decompose the block of $B_t$. Therefore, we can choose a sequence of admissible operations which only involves indices of $B_t$. This gives us a sequence of admissible operations that does not change other sub-columns $c_t|_{\row(B_j)}$ for $B_j\neq B_t$.
    Starting with this sequence of admissible operations, we construct another sequence of admissible operations which further keeps $\A^{(t)}_{\leq t-1}$ unchanged to reach the contradiction. Note that $\A^{(t)}_{\leq t-1}=\A^{(t-1)}_{\leq t-1}$

    Observe that all index blocks of $\B'$ other than $B_0$ are also index blocks in $\B^{(t-1)}$, i.e. $\B'\setminus\{B_0\}\subseteq \B^{(t-1)}$. $B_0$ can be written as $B_0=\bigoplus_{B_j \in \B^{(t-1)}\setminus\B'} B_{j}\oplus [\emptyset, \{t\}]$. Let $B_a$ be the merge of index blocks that  are in $\A^{(t-1)}$ and also in $\A'$ and $B_b$ be the merge of the rest of the index blocks of $\A^{(t-1)}$, i.e., $B_a = \bigoplus_{B_j\in \B'\cap \B^{(t-1)}} B_j$ and $B_b = \bigoplus_{B_j \in \B^{(t-1)}\setminus\B'} B_{j}$. Then $\{B_a, B_b\}$ can be viewed as a coarser decomposition on $\A^{(t-1)}_{\leq t-1}$ and also on $\A'_{\leq t-1}$. By taking restrictions, 
    we have $\A'|_{B_a}\sim \A^{(t-1)}|_{B_a}$ with equivalent transformation $F_a$ and $\A'|_{B_b}\sim \A^{(t-1)}|_{B_b}$ with equivalent transformation $F_b$. Then $F_a$ gives a sequence of admissible operations with indices in $B_a$ and $F_b$ gives a sequence of admissible operations with indices in $B_b$. By applying these operations on $\A'$, we can transform $\A'_{\leq t-1}$ to $\A^{(t-1)}_{\leq t-1}$ with sub-column $[\row(\A)\setminus\row(B_0), \{t\}]$ unchanged, which consists of the sub-columns that have already been reduced to zero. 
    Combining all admissible operations from the three transformations $F, F_a$ and $F_b$ together, we get a sequence of admissible operations that reduce sub-column $[\row(B_i),\, \{t\}]$ to zero without changing $\A^{(t)}_{< t}$ and also those sub-columns which have already been reduced. But, then
    {\sc BlockReduce} would have returned `true' signaling that $B_i$ should not be merged with any other block
    required to form the block $B_t$ reaching a contradiction. 
\end{proof}

Now we design the {\sc BlockReduce} subroutine as required. 
With the requirement of prior preservation,
observe that reducing the sub-column $c_t|_{\row B}$ for some $B\in\B^{(t-1)}$ is the same as reducing ${T}=[\row(B),\, (\col(\A_{\leq t})\setminus\col(B))]$ called the {\emph{target block}} (see Figure~\ref{Fig:incremental1} on the right) . The main idea of {\sc BlockReduce} is to consider a 
specific subset of admissible operations called \emph{independent operations}. 
Within $\A_{\leq t}$,
these operations only change entries in ${T}$ and this change is independent of their order of application. Our {\sc BlockReduce} is designed to search for a sequence of admissible operations within this subset and
reduce ${T}$ with it, if it exists. Clearly, the prior is preserved with these operations. 
The only thing we need to ensure is that searching within 
the set of independent operations is sufficient. That means, if there exists a sequence of admissible operations that can reduce ${T}$ to $0$ and meanwhile preserves the prior, then we can always find one such sequence with only independent operations. This is what we show next.


Consider the following matrices for each admissible operation.
For each admissible column operation $c_i\rightarrow c_j$, 
let
\[
 \Y^{i,j}:=\A\dotr[\delta_{i,j}]
\]
where $[\delta_{i,j}]$ is the $m\times m$ square matrix with only one non-zero entry at $(i,j)$. Observe that $\A\dotr[\delta_{i,j}]$ is a matrix with the only nonzero column at $j$ with entries copied from $c_i$ in $\A$. 
Similarly, for each admissible row operation $r_l\rightarrow r_k$, let $[\delta_{k,l}]$ be the $\ell\times\ell$ matrix with only non-zero entry at $(k,l)$.
let 
\[
\X^{k,l}:=[\delta_{k,l}]\dotr\A
\]

Application of a column operation $c_i\rightarrow c_j$ 
can be viewed as
updating $\A$ to $\A\dotr(\I+[\delta_{i,j}])=\A+\Y^{i,j}$.
Similar observation holds for row operations as well.
For a target block $T=[\row(B), \col(\A_{\leq t})\setminus\col(B)]$ defined on some $B\in\B^{(t-1)}$, we say an admissible column (row)  operation, $c_i\rightarrow c_j$ ($r_l\rightarrow r_k$ resp.) is \emph{independent} on $T$ if $i\notin \col(T), j\in col(T)$ ($l\notin \row(T), k\in \row(T)$ resp.). Briefly, we just call such operations \emph{independent operations} if $T$ is clear from the context.



We have two observations about independent operations that are important.
The first one follows from the definition that ${T}=[\row(B),\, \col(\A_{\leq t})\setminus\col(B)]$.
\begin{observation}
Within $\A_{\leq t}$, an independent column or row operation only changes entries on $T$.
\label{obs:prior}
\end{observation}
\begin{observation}\label{prop:cross_term_0}
For any independent column operation $c_i\rightarrow c_j$ and row operation $r_l\rightarrow r_k$,
we have $[\delta_{k,l}]\dotr {\A}\dotr [\delta_{i,j}]=0$. Or, equivalently
\begin{equation}\label{eq:cross_term_0}
(\I_{\ell\times\ell} + [\delta_{k,l}] )\dotr \A \dotr (\I_{m\times m}+[\delta_{i,j}])=\A + [\delta_{k,l}]\A + \A [\delta_{i,j}]= \A+\X^{k,l} + \Y^{i,j}
\end{equation}
\end{observation}
\begin{proof}
$[\delta_{k,l}]\dotr {\A}\dotr [\delta_{i,j}]=\A_{l,i}[\delta_{k,j}]$(see Fig~\ref{fig:cross_term} for an illustration). By definitions of independence and $T$, we have $l\notin\row(B), i\in \col(B)$. That means they are row index and column index from different blocks. Therefore, $\A_{l,i}=0$.
\end{proof}
\begin{figure}
    \centering
    \includegraphics[width=0.3\textwidth]{./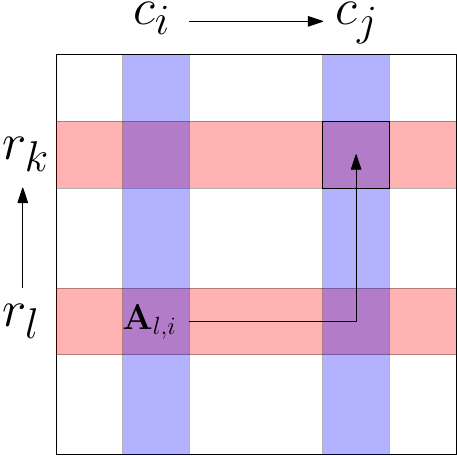}
    \caption{$[\delta_{k,l}] \A [\delta_{i,j}]$ is a matrix with the only nonzero entry at $(k,j)$ being a copy of $\A_{l,i}$.}
    \label{fig:cross_term}
\end{figure}

The following proposition reveals why we are after the independent
operations.

\begin{proposition}\label{prop:block_reduction_linearization}
The target block $\A|_{{T}}$ can be reduced to 0 while preserving the prior if and only if $\A|_{{T}}$ can be written as a linear combination of independent operations. That is,
\begin{equation}
    \A|_{{T}}=
\sum_{\mathclap{\substack{l\notin\row(T)\\ k\in\row({T}) }}}\alpha_{k,l} \X^{k,l}|_{{T}}+\sum_{\mathclap{\substack{i\notin\col(T)\\ j\in \col({T}) }}} \beta_{i,j}\Y^{i,j}|_{{T}} 
\label{eq:ops}
\end{equation}
where 
$\alpha_{k,l}$'s and $\beta_{i,j}$'s are coefficient in $\mathbb{k}=\mathbb{F}_2$. 
\end{proposition}
\begin{proof}

    The full proof is in Appendix~\ref{sec:missing_proofs}, here we give some intuitive explanation. 
    Reducing the target block $\A|_{{T}}$ to $0$ is equivalent to finding matrices $\LL$ and $\RR$
    encoding sequences of admissible row operations and admissible column operations respectively so that $\LL\A\RR|_{{T}}=0$. 
    For $\Leftarrow$ direction, 
    we can build 
    $\LL=\I+\sum\alpha_{k,l}[\delta_{k,l}]$ and 
    $\RR=\I+\sum\beta_{i,j}[\delta_{i,j}]$ 
    with binary coefficients $\alpha_{k,l}$'s and $\beta_{i,j}$'s given in Equation~\ref{eq:ops}. Then using
    Observations~\ref{obs:prior} and~\ref{prop:cross_term_0},  
    we show $\LL\A\RR$ indeed reduces $\A|_{{T}}$ to $0$ with the prior being preserved.
    
    For $\Rightarrow$ direction, as long as we show that the existence of a transformation reducing $\A|_{{T}}$ to $0$ implies the existence of a transformation reducing $\A|_{{T}}$ to $0$ by independent operations, we are done. This is formally captured as Proposition~\ref{lm:linear_comb} and proved in Appendix~\ref{sec:missing_proofs}. 
\end{proof}
\begin{figure}[h!]
    \centering
    \includegraphics[width=\textwidth, page=18]{./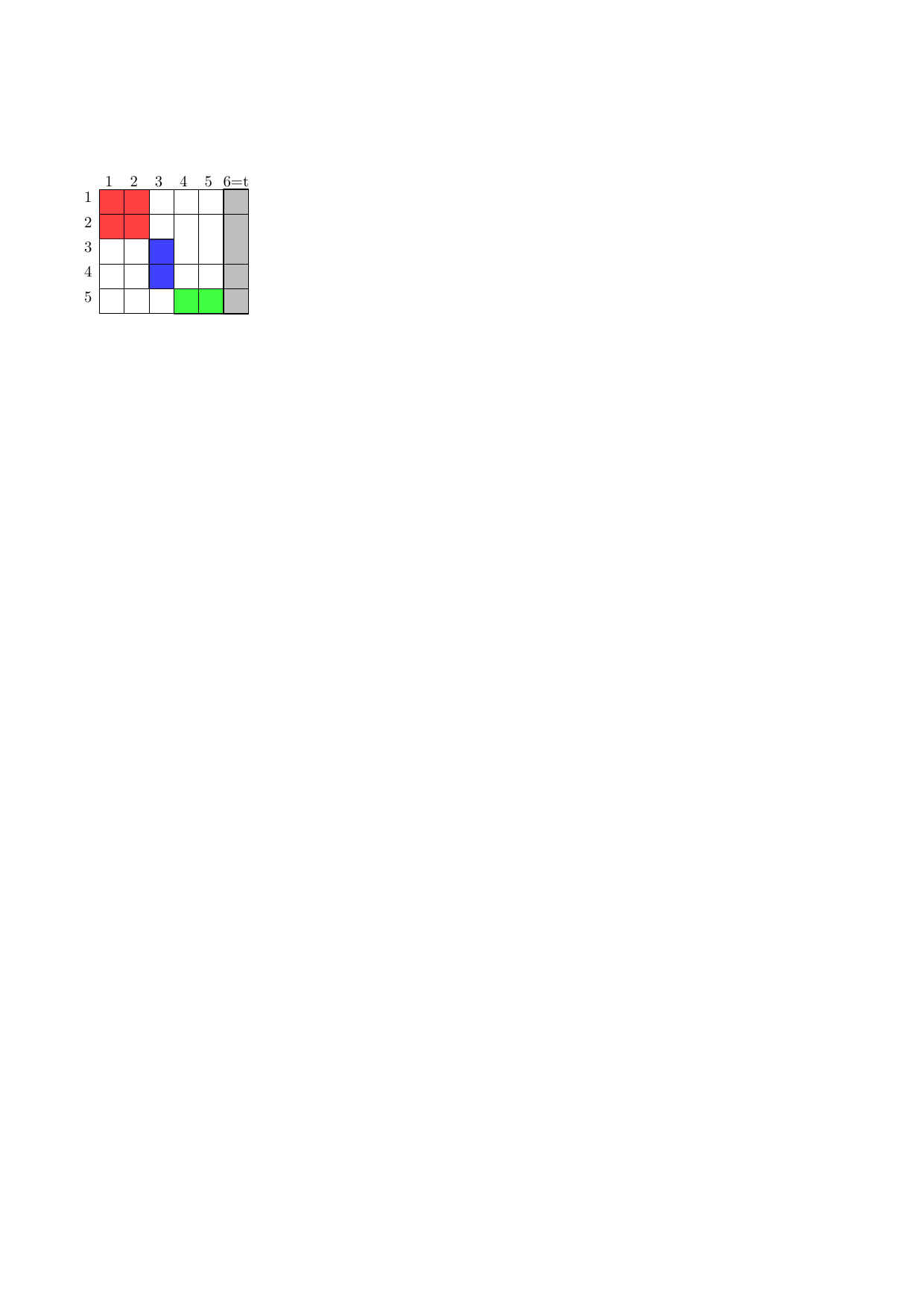} 
    \caption{$t=6$ is the current column. For the current block, the red one, our target block $T$ is the purple one. We build $\X^{k,l}|_T$'s for admissible row operations from blue or green into purple. For example, $r_5\rightarrow r_1$ illustrated on the left. Also, build $\Y^{i,j}|_T$'s for admissible column operations from the red block to the purple block. For example, $c_1\rightarrow c_5$ on top-left. The middle picture shows $\X^{1,5}|_{{T}}$ and  $\Y^{1,5}|_{{T}}$ for these two operations. After linearizing, the corresponding  vectors are added into the source matrix, which is finally used to reduce the target $\A|_{T}$.
     }
    \label{fig:linearization_trick_illustration}
\end{figure}
We can view $\A|_T, \Y^{i,j}|_T, \X^{k,l}|_T$ as binary vectors in the same $|T|$-dimensional space.
Proposition~\ref{prop:block_reduction_linearization}
tells us that it is sufficient to
check if $\A|_{{T}}$ can be a linear combination of the vectors
corresponding to a set of independent operations. So, we first
\emph{linearize} each of the matrices $\Y^{i,j}|_T$'s, $\X^{k,l}|_T$'s, and
$\A|_{T}$ to a column vector as described later (see Figure~\ref{fig:linearization_trick_illustration}). Then, we check
if $\A|_{{T}}$ is in the span of $\Y^{i,j}|_{{T}}$'s and
$\X^{k,l}|_{{T}}$'s. This is done by collecting all vectors $\X^{i,j}|_T$'s
and $\Y^{k,l}|_T$'s into a matrix $S$ called the \emph{source matrix} (Figure~\ref{fig:linearization_trick_illustration}(right)) and then
reducing the vector $c:=\A|_T$ with $S$ by some standard matrix reduction algorithm with left-to-right column additions, which is the subroutine called {\sc ColReduce} in {\sc BlockReduce} described below.
If $c=\A|_T$ can be reduced to
$0$, we apply the corresponding independent operations to update $\A$.
Observe that all column operations used in reducing $\A|_T$ together only change
the sub-column $c_t|_{\row B}$ while row operations 
may change $\A$ to the right of the column $t$.

Here we provide a short description and the pseudo-code of the subroutine {\sc ColRdeuce}.

For a column $c_j$, we use $\low(c_j)$ to indicate the lowest row number such that $c_j$ has $1$ in that row. Let $\low(c_j)=-1$ if $c_j$ is a zero column.
We call a matrix $\mathbf{S}'\sim \mathbf{S}$  lowest-conflict-free for $\mathbf{S}$ if for each row index $i=\low(c_j)\neq -1$ there is no $j'\not = j$ so that $\low(c_{j'})=i$.
Notice that $\mathbf{S}'$ is not necessarily unique. However, all the claims do not depend on the choice of $\mathbf{S}'$.
The algorithm {\sc ColReduce}$(\mathbf{S}, c)$ transforms the matrix $[\mathbf{S}|c]$ to a lowest-conflict-free matrix and as a result reduces the column $c$.
We say this procedure {\em reduces $c$ with $\mathbf{S}$.}
Note that this algorithm is the traditional persistence algorithm.
\begin{algorithm}[H]
    \caption{\sc{ColReduce}$(\mathbf{S
    }, c)$ }
    \SetAlgoLined
    \KwIn{$\mathbf{S}$=source matrix, $c$=target column to reduce.}
    \KwResult{return the reduced target column}
    $\BS'\gets [\BS|c]$\;
    \For(\tcp*[f]{Transform $[\mathbf{S}|c]$ to be lowest-conflict-free}){$i\gets 1$ to $|\col(\mathbf{S})|$}
    {
        $\ell\gets \low(c_i)$\;
        \If{$\ell\neq -1$}{
        \For{ $j\gets 1$ to $i-1$}{
            \If{ $\low(c_j)==\ell$}{
                $c_i\gets c_j+c_i$\; go to 3
            }
        }
        }
    }
    \KwRet{$c$}
\end{algorithm}

The following fact is well known and is the basis of the classical matrix based
persistence algorithm.

\begin{fact}\label{fact:col_red}
There exists a set of column operations adding a column only to its right
such that the matrix $[\mathbf{S}|c]$ is reduced to $[\mathbf{S}'|0]$
if and only if {\sc ColReduce}$(\mathbf{S},c)$ returns a zero vector.
\end{fact}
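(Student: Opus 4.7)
The plan is to prove the equivalence via the classical pivot-uniqueness invariant underlying the standard persistence algorithm: the profile of low-values in any lowest-conflict-free form obtained by left-to-right additions is determined by the column spans of the prefixes of the starting matrix, not by the particular sequence of operations.

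The ``only if'' direction is essentially immediate from inspection of the pseudocode of {\sc ColReduce}. Every operation executed on line 7 has the form $c_i \gets c_j + c_i$ with $j < i$, that is, an addition of a column to one strictly to its right. Hence, whenever {\sc ColReduce}$(\BS, c)$ terminates with the last column equal to zero, the sequence of operations it performed is already a witness that $[\BS | c]$ reduces to some $[\BS' | 0]$ through admissible left-to-right additions.

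For the ``if'' direction, the main step is the following invariance claim. Let $V_i$ denote the $\field{k}$-span of the first $i$ columns of $[\BS | c]$. I would prove, by induction on $i$, that for any matrix $M^{\mathrm{red}}$ obtained from $[\BS | c]$ by left-to-right column additions and reduced to a lowest-conflict-free form, the $i$-th column $c_i^{\mathrm{red}}$ satisfies: $c_i^{\mathrm{red}} = 0$ iff the $i$-th original column lies in $V_{i-1}$, and otherwise $\low(c_i^{\mathrm{red}})$ is the largest row index $\ell$ such that $V_i$ contains a vector whose support in rows $\{\ell,\ell+1,\dots,n\}$ is exactly $\{\ell\}$ and no such vector lies in $V_{i-1}$. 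Since this characterization refers only to the spans $V_1 \subseteq V_2 \subseteq \cdots$, the low-profile is an invariant of the sequence of prefix spans. Applying this with $i$ equal to the last column index: if $[\BS | c]$ reduces by left-to-right additions to $[\BS' | 0]$, then the span of the full matrix equals the span of $\BS$, so $c \in \mathrm{span}(\BS) = V_{|\col(\BS)|}$, and therefore any lowest-conflict-free reduction produces a zero last column. Since {\sc ColReduce} does produce a lowest-conflict-free matrix (the inner loops at lines 5--8 restart via the ``go to 3'' until no lowest-conflict remains), its output must be zero as well.

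The main obstacle is a clean justification of the pivot-invariance lemma. The cleanest route is to argue that if two lowest-conflict-free forms $M^{\mathrm{red}}$ and $\tilde M^{\mathrm{red}}$ had different low-value profiles on some prefix, then a symmetric-difference combination of their columns would produce a nonzero vector in $V_i$ whose low-value simultaneously equals a pivot row used in one form and no pivot row used in the other, contradicting the lowest-conflict-free property. This is standard in the persistent homology literature, and only the bookkeeping has to be made explicit; no new ideas beyond Gaussian elimination restricted to left-to-right moves are required.
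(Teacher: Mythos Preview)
Your argument is correct in substance and is exactly the standard pivot-uniqueness proof for the persistence algorithm. Note, however, that the paper does not actually supply a proof of this fact: it states it as a well-known fact underlying the classical matrix-based persistence algorithm and moves on. So there is nothing to compare against; you have simply filled in what the paper cites as folklore.

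One small slip: you have the labels ``if'' and ``only if'' reversed. As the statement is phrased (``there exists a reduction \ldots\ if and only if {\sc ColReduce} returns zero''), the direction that follows immediately from the pseudocode is the ``if'' direction (zero output witnesses a reduction), and the pivot-invariance argument handles the ``only if'' direction (any admissible reduction to zero forces {\sc ColReduce} to also return zero). The mathematics is unaffected.
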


Now we describe the linearization used in routine {\sc BlockReduce} as presented in
Algorithm~\ref{alg:blockreduce}:{\sc BlockReduce}.
We fix a linear order $\leq_{\lin}$ on the set of matrix indices, 
$\row(\A)\times \col(\A) $, as follows:
$(i,j)\leq_{\lin}(i',j')$ if $j>j'$ or $j=j', i<i'$. 
Explicitly, we linearly order the indices as: 
\begin{equation*}
    (({1,m}), ({2,m}),\dots, ({\ell,n}),({1,m-1}),({2,m-1}),\dots).
\end{equation*}
For any index block $B$, let $\lin(\mathbf{A}|_B)$ be the vector of dimension
$|\col(B)|\cdot |\row(B)|$ 
obtained by linearizing $\mathbf{A}|_B$ 
to a vector in the above linear order on the indices.

\begin{algorithm}[H]
    \caption{{\sc BlockReduce}$(T)$}\label{alg:blockreduce}
    \SetAlgoLined
    \KwData{$\A$=global variable of the given matrix.}
    \KwIn{
    $T$=index of target block to be reduced;
    $t$=index of current column
    }
    \KwResult{Return a boolean to indicate whether $\A|_{T}$ can be reduced. Reduce block $\A|_{T}$ if possible.}
    Compute $c:=\lin(\mathbf{A}|_T)$ and initialize empty matrix $\mathbf{S}$\;
    \For{each admissible column operation $c_i\rightarrow c_j$ with $i\notin \col(T), j \in \col(T)$,}{
        compute $\Y^{i,j}|_T:=(\A\dotr[\delta_{i,j}])|_T$ and $y^{i,j}=\lin(\Y^{i,j}|_T)$; update $\mathbf{S}\leftarrow [\mathbf{S}|y^{i,j}]$\;
    }
    \For{each admissible row operation $r_l\rightarrow r_k$ with $l\notin \row(T), k\in \row(T)$}{
        compute $\X^{k,l}|_T:= ([\delta_{k,l}]\dotr\A)|_T$ and $x^{k,l}:=\lin(\X^{k,l}|_T)$; update $\mathbf{S}\leftarrow [\mathbf{S}|x^{k,l}]$\;
    }
    {\sc ColReduce} $(\mathbf{S},c)$ returns indices of $y^{i,j}$'s and $x^{k,l}$'s used to reduce $c$ if possible\;
    For every such index of $y^{i,j}$ or $x^{k,l}$ apply $c_i\rightarrow c_j$ or $r_l\rightarrow r_k$ to transform $\A$\;
    \KwRet{$\A|_{{T}}==0$};

\end{algorithm}


\begin{proposition}\label{prop:alg_block_reduce_correctness}
The target block on ${T}$ can be reduced to zero in $\A$ while preserving the prior if and only if {\sc{BlockReduce}($T$)} returns true.
\end{proposition}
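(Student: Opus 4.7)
The plan is to reduce the claim to Proposition~\ref{prop:block_reduction_linearization} combined with a careful analysis of how {\sc ColReduce} interacts with the linearization order $\leq_{\lin}$. First I would check that the two for-loops in {\sc BlockReduce} enumerate exactly the generators $\Y^{i,j}$ (for $i \in \col(B)$, $j \in \col(T_{\leq t})$) and $\X^{k,l}$ (for $l \in \row(R)$, $k \in \row(T)$) appearing in Proposition~\ref{prop:block_reduction_linearization}: $i \notin \col(T)$ is equivalent to $i \in \col(B)$; $l \notin \row(T)$ is equivalent to $l \in \row(R)$ since $\row(T) = \row(B)$; and $\row(T_{\leq t}) = \row(T)$. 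Hence the columns of $\mathbf{S}$ are precisely $\lin(\Y^{i,j})$ and $\lin(\X^{k,l})$, while $c = \lin(\A|_T)$. Since $\lin$ is a linear bijection that commutes with restriction to $T_{\leq t}$, Proposition~\ref{prop:block_reduction_linearization} becomes: $\A|_{T_{\leq t}}$ can be zeroed out without changing $\A_{<t}$ iff $c|_{T_{\leq t}}$ lies in the $\field{F}_2$-span of the $T_{\leq t}$-restrictions of the columns of $\mathbf{S}$.

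The main step is then the claim that after {\sc ColReduce}$(\mathbf{S},c)$ returns $c'$, we have $c'|_{T_{\leq t}} = 0$ iff $c|_{T_{\leq t}}$ belongs to that span; combined with the previous equivalence this yields the desired biconditional since {\sc BlockReduce} returns true exactly when $c'|_{T_{\leq t}} = 0$. The easy direction is immediate: $c' = c + v$ for some $v \in \mathrm{span}(\mathbf{S})$, so if $c'|_{T_{\leq t}} = 0$ then $c|_{T_{\leq t}} = v|_{T_{\leq t}}$ lies in the span. For the hard direction I would exploit the specific definition of $\leq_{\lin}$, which orders matrix positions by decreasing $j$ then increasing $i$: since $T_{\leq t}$ collects columns with $j \leq t$ while $T \setminus T_{\leq t}$ collects those with $j > t$, the $T_{\leq t}$ entries of every linearized vector occupy its bottom segment. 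Consequently $\low(v)$ of any vector whose $T_{\leq t}$ portion is nonzero must lie inside the $T_{\leq t}$ region.

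With this order observation in hand, I would replace $\mathbf{S}$ by its lowest-conflict-free form $\{s_i'\}$ using the same left-to-right column additions that {\sc ColReduce} performs (this preserves the column span), and partition the indices into $L = \{i : \low(s_i') \in T_{\leq t}\}$ and its complement. By the order observation, $s_i'|_{T_{\leq t}} = 0$ for $i \notin L$, and $\{s_i'|_{T_{\leq t}} : i \in L\}$ is linearly independent with pairwise distinct lowest entries, so it spans the $T_{\leq t}$-projection of $\mathrm{span}(\mathbf{S})$. Now suppose for contradiction that $c|_{T_{\leq t}}$ lies in this projection yet $c'|_{T_{\leq t}} \neq 0$; then $\low(c') = \low(c'|_{T_{\leq t}})$ is in $T_{\leq t}$, and expressing $c'|_{T_{\leq t}}$ as a nontrivial $\field{F}_2$-combination of the independent set $\{s_i'|_{T_{\leq t}}\}_{i \in L}$ forces $\low(c') = \low(s_i')$ for some $i \in L$, contradicting the lowest-conflict-free property of the output of {\sc ColReduce}. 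The hard part is precisely this last step: one must leverage the fact that $\leq_{\lin}$ is tailored to place $T_{\leq t}$ at the bottom so that the standard echelon-form reasoning underpinning Fact~\ref{fact:col_red} localizes to the target region.
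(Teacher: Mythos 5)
Your proposal is correct and follows essentially the route the paper intends: reduce the statement to Proposition~\ref{prop:block_reduction_linearization}, observe that the two loops of {\sc BlockReduce} enumerate exactly the generators $\Y^{i,j}$ and $\X^{k,l}$, and then use the behaviour of {\sc ColReduce} together with the choice of $\leq_{\lin}$ (which places the $T_{\leq t}$ entries in the bottom segment) to conclude that the reduced column has zero $T_{\leq t}$ part iff $\A|_{T_{\leq t}}$ lies in the span of the restricted generators. The paper leaves this last localization step as ``not hard to see''; your echelon-form argument restricted to the bottom segment spells out precisely the detail that Fact~\ref{fact:col_red} alone does not literally cover, and it is sound.
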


\vspace{0.1in}
\noindent
\textbf{Time complexity}.
First we analyze the
time complexity of {\sc TotDiagonalize} assuming that the input matrix has size $\ell\times m$.
Clearly, $\max\{\ell,m\}=O(N)$ where $N$ is the total number of generators and relations.
For each of $O(N)$ columns, we attempt to zero out every sub-column with row indices coinciding with each block $B$ of the previously determined $O(N)$ blocks. Let $B$ has $N_B$ rows. Then, the block $T$ in step 5 has $N_B$ rows and $O(N)$ columns.

To zero-out a sub-column, we create a source matrix out of $T$ which has size $O(NN_B)\times O(N^2)$ because each of
$O({N\choose 2})$ possible operations is converted to a column of size
$O(NN_B)$ in the source matrix. The source matrix $\BS$ with the target vector $c$
can be reduced with an efficient algorithm~\cite{BH74,IMH82} in $O(a+N^2(NN_B)^{\omega-1})$ time where $a$ is the total number of nonzero elements
in $[\BS|c]$ and $\omega\in[2, 2.373)$ is the exponent for matrix multiplication. We have $a=O(NN_B\cdot N^2)=O(N^3N_B)$. Therefore, for each block $B$ we spend $O(N^3N_B + N^2(NN_B)^{\omega -1})$ time in step 6. Then, observing $\sum_{B\in\B}N_B=N$, for each column we spend a total time of
\begin{equation}\label{eq:time_complexity_matrix_reduction}    
    \sum_{B\in\B} O(N^3N_B +N^2(NN_B)^{\omega-1})=O(N^4+N^{\omega+1}\sum_{B\in \B} N_B^{\omega-1})=O(N^4+N^{2\omega})=O(N^{2\omega})
\end{equation}
    Therefore, counting for all of the $O(N)$ columns, the total time for decomposition takes $O(N^{2\omega+1})$ time.

\subsection{Running {\sc TotDiagonalize} on the working example ~\ref{ex:working_example}}\label{sec:algorithm_run}


\begin{example}\label{last-example}
    Consider the binary matrix after simplication as illustrated in Example~\ref{eg:eg0_matrix}.
    $$
    \bordermatrix{\mathbf{A}  &c_1^{(1,1)}            &   c_2^{(1,2)}         &  c_3^{(2 ,1)}          \cr
                    r_1^{(0,1)} & 1    &   1 & 0                    \cr
                    r_2^{(1,0)} & 1    &   0                   & 1    \cr
                    r_3^{(1,1)} & 0                     &   1   & 1   }
    $$
    It has 4 admissible operations:
    $r_3\rightarrow r_1, r_3\rightarrow r_2, c_1\rightarrow c_2, c_1\rightarrow c_3$.
    
\begin{figure}[ht!]
\begin{center}
    \includegraphics[width=0.8\textwidth]{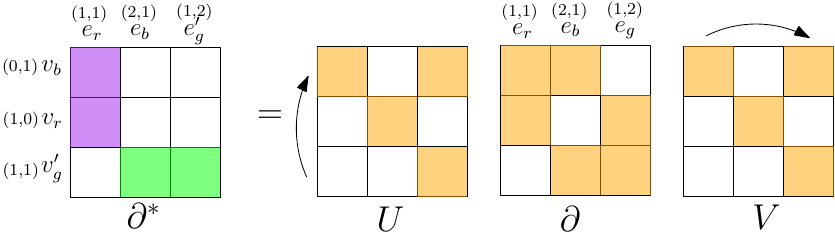}
\end{center}
\caption{
Diagonalizing the binary matrix given in Example~~\ref{eg:eg0_matrix}:
It is equivalent to multiplying the original
matrix $\partial$ with a left matrix $U$ that represents
the row operation and a right matrix $V$ that represents
the column operations.
}
\end{figure}

Before the first iteration, $\B$ is initialized to be $\B=\{B_1=(\{1\},\emptyset), B_2=(\{2\}, \emptyset), B_3=(\{3\}, \emptyset)\}$.
In the first iteration when $t=1$,
we have block $B_0=(\emptyset, \{1\})$ for column $c_1$.
For $B_1=(\{1\},\emptyset)$, the target block we hope to zero out is $T=(\{1\},\{1\})$. So we call {\sc{BlockReduce($T$)}} to check if $\A|_{T}$ can be zeroed out and update the entries on $T$ according to the results of {\sc{BlockReduce($T$)}}. 
There is only one admissible operation from outside of $T$ into it, namely, $r_3\rightarrow r_1$.  The target vector $c=\lin(\A|_{T})$ and the source matrix $\mathbf{S}=\{\lin(([\delta_{1,3}]\A)|_T)\}$ are:

\[
\kbordermatrix{ \mathbf{S}& \lin(([\delta_{1,3}]\A)|_T) & \vrule &  c=\lin(\A|_{T}) \\
 &    0 &  \vrule &  1
}
\]
The result of {\sc{ColReduce($\mathbf{S}, c$)}} stays the same as its input. That means we cannot reduce $c$ at all. Therefore, {\sc{BlockReduce($T,t$)}} returns {\sc{false}} and nothing is updated in the original matrix.

It is not surprising that the matrix remains the same because the only admissible operation that can affect $T$ does not change any entries in $T$ at all. So there is nothing one can do to reduce it, which results in merging $B_1\oplus B_0=(\{1\}, \{1\})$.
Similarly, for $B_2$ with $T=(\{2\},\{1\})$, the only admissible operation $r_3\rightarrow r_2$ does not change anything in $T$. Therefore, the matrix does not change and $B_2$ is merged with $B_1\oplus B_0$, which results in the block $(\{1,2\}, \{1\})$.
For $B_3$ with $T=(\{3\},\{1\})$, there is no admissible operation. So the matrix does not change. But $\A|_{T}=\A|_{(\{3\},\{1\})}=0$. That means {\sc{BlockReduce}} returns {\sc{true}}. Therefore, we do not merge $B_3$.
In summary, $B_0, B_1, B_2$ are merged to be one block $(\{1,2\}, \{1\})$ in the first iteration. So after the first iteration, there are two index blocks in $\B^{(1)}$:  $(\{1,2\}, \{1\})$ and $(\{3\}, \emptyset)$.

In the second iteration $t=2$, we process the second column $c_2$. Now $B_1=(\{1,2\}, \{1\}), B_2=(\{3\}, \emptyset)$ and $B_0=(\emptyset, \{2\})$.
For the block $B_1=(\{1,2\}, \{1\})$, the target block we hope to zero out is $T=(\{1,2\}, \{2\})$. There are three admissible operations from outside of $T$ into $T$, $r_3\rightarrow r_1, r_3\rightarrow r_2, c_1\rightarrow c_2$. {\sc BlockReduce}$(T)$ constructs the target vector $c=\lin(\A|_{T})$ and the source matrix $\mathbf{S}=\{\lin(([\delta_{1,3}]\A)|_T), \lin(([\delta_{2,3}]\A)|_T), \lin((\A[\delta_{1,2}])|_T)\}$ illustrated as follows:

\[
\kbordermatrix{ \mathbf{S}& \lin(([\delta_{1,3}]\A)|_T) & \lin([(\delta_{2,3}]\A)|_T) & \lin((\A[\delta_{1,2}])|_T) & \vrule &  c=\lin(\A|_{T}) \\
 &    1 & 0 & 1 & \vrule &  1 \\
 &    0 & 1 & 1 & \vrule &  0
}
\]

The result of {\sc ColReduce}$(\mathbf{S}, c)$ is
\[
\kbordermatrix{ &  &  \mathbf{S} &   & \vrule &  c \\
 &    1 & 0 & 0 & \vrule &  0 \\
 &    0 & 1 & 0 & \vrule &  0
}
\]

So the {\sc{BlockReduce}} updates $\A|_{T}$ to get the following updated matrix:
$$
\bordermatrix{\mathbf{A}'  &c_1^{(1,1)}            &   c_2^{(1,2)}        &  c_3^{(2 ,1)}   \cr
r_1^{(0,1)} +r_3^{(1,1)}    & 1    &   0    & 1  \cr
r_2^{(1,0)}                 & 1    &   0    & 1    \cr
r_3^{(1,1)}                 & 0    &   1    & 1   }
$$

and return {\sc{true}} since $\A'|_{T}==0$. Therefore, we do not merge $B_1$.
We continue to check for the block $B_2=(\{3\}, \emptyset)$ and $T=(\{3\}, \{1,2\})$, whether $\A'|_{T}$ can be reduced to zero. There is no admissible operation for this block at all. Therefore, the matrix stays the same and {\sc{BlockReduce}} returns {\sc{false}}. We merge $B_2\oplus B_0=(\{3\}, \{2\})$.

Continuing the process for the last column $c_3$ in the third iteration $t=3$, we see that $B_1=(\{1,2\}, \{1\}), B_2=(\{3\}, \{2\})$ and $B_0=(\emptyset, \{3\})$.
For the block $B_1=(\{1,2\}, \{1\})$, the target block we hope to zero out is $T=(\{1,2\}, \{2,3\})$. There are four admissible operations from outside of $T$ into $T$, $r_3\rightarrow r_1, r_3\rightarrow r_2, c_1\rightarrow c_2,  c_1\rightarrow c_3$. {\sc BlockReduce}$(T)$ constructs the target vector $c=\lin(\A|_{T})$ and the source matrix $\mathbf{S}=\{\lin(([\delta_{1,3}]\A)|_T), \lin(([\delta_{2,3}]\A)|_T), \lin((\A[\delta_{1,2}])|_T)\}, \lin((\A[\delta_{1,3}])|_T)\}$ illustrated as follows:
\[
\kbordermatrix{ \mathbf{S}& \lin(([\delta_{1,3}]\A)|_T) & \lin([(\delta_{2,3}]\A)|_T) & \lin((\A[\delta_{1,2}])|_T) & \lin((\A[\delta_{1,3}])|_T) & \vrule &  c=\lin(\A|_{T}) \\
 &    1 & 0 & 0 & 1 & \vrule &  1 \\
 &    0 & 1 & 0 & 1 & \vrule &  1 \\ 
 &    1 & 0 & 1 & 0 & \vrule &  0 \\
 &    0 & 1 & 1 & 0 & \vrule &  0
}
\]

The result of {\sc ColReduce}$(\mathbf{S}, c)$ is
\[
\kbordermatrix{ &  &  \mathbf{S} &  & & \vrule &  c \\
 &    1 & 0 & 1 & 0 & \vrule &  0 \\
 &    0 & 1 & 1 & 0 & \vrule &  0 \\
 &    1 & 0 & 0 & 0 & \vrule &  0 \\
 &    0 & 1 & 0 & 0 & \vrule &  0
}
\]

So the {\sc{BlockReduce}} updates $\A|_{T}$ to get the following updated matrix:
$$
\bordermatrix{\mathbf{A}'  &c_1^{(1,1)}            &   c_2^{(1,2)} + c_1^{(1,1)}        &  c_3^{(2 ,1)}   \cr
r_1^{(0,1)}                 & 1    &   0    & 0   \cr
r_2^{(1,0)} +r_3^{(1,1)}    & 1    &   0    & 0   \cr
r_3^{(1,1)}                 & 0    &   1    & 1   }
$$
and returns {\sc{true}} since $\A'|_{T}==0$. 
Therefore, we do not merge $B_1$ with any other block.
We continue to check for the block $B_2=(\{3\}, \{2\})$ and $T=(\{3\}, \{1,3\})$, whether $\A'|_{T}$ can be reduced to zero. There is no admissible operation for this block at all. Therefore, the matrix stays the same and {\sc{BlockReduce}} returns {\sc{false}}. We merge $B_2\oplus B_0=(\{3\}, \{2, 3\})$.

Finally the algorithm returns the matrix $\A'$ shown above as the final result. It is the correct total diagonalization with two index blocks in $\B^{\A^*}$: $B_1=(\{1,2\}, \{1\})$ and $B_2=(\{3\}, \{2,3\})$.
    An examination of {\sc ColReduce}$(\mathbf{S}, c)$ in all three iterations over columns reveals that the entire matrix $\A$ is updated by operations $r_3\rightarrow r_2$ and $c_1\rightarrow c_2$.


We can further transform it back to the original form of the presentation matrix $[\partial_1]$. Observe that a row addition $r_i\leftarrow r_i+r_j$ reverts to a basis change in the opposite direction.
$$
\bordermatrix{[\partial_1]  &e_r^{(1,1)}            &   e_b^{(1,2)}         & e_g^{(2,1)}          \cr
                v_b^{(0,1)} & \mathbf{t}^{(1,0)}    &   \mathbf{t}^{(1,1)}   & 0                    \cr
                v_r^{(1,0)} & \mathbf{t}^{(0,1)}    &   0                   & \mathbf{t}^{(1,1)}    \cr
                v_g^{(1,1)} & 0                     &   \mathbf{t}^{(0,1)}   & \mathbf{t}^{(1,0)}   }
$$
$\Longrightarrow$
$$
\bordermatrix{[\partial_1]^*  &e_r^{(1,1)}            &  e_b^{(1,2)}+\mathbf{t}^{(0,1)} e_r^{(1,1)}         &  e_g^{(2,1)}       \cr
v_b^{(0,1)}     & \mathbf{t}^{(1,0)}     &   0    & 0   \cr
v_r^{(1,0)}    & \mathbf{t}^{(0,1)}    &   0    & 0   \cr
v_g^{(1,1)}+\mathbf{t}^{(0,1)}v_r^{(1,0)}    & 0 &  \mathbf{t}^{(0,1)}    & \mathbf{t}^{(1,0)}   }
$$
\end{example}

\section{Computing presentations}\label{sec:compte_presentation}
Now that we know how to decompose a presentation by diagonalizing its
matrix form, we describe how to construct and compute these matrices in this section.
In practice, as described in Example~\ref{ex:working_example}, a persistence module is given implicitly with a simplicial filtration from which a graded module of simplicial chain complex can be inferred as we discussed before.
We always assume that the simplicial filtration is $1$-critical, which means that each simplex has a unique earliest birth time. 
For the case which is not $1$-critical, called multi-critical, one may utilize the \emph{mapping telescope},
a standard algebraic construction~\cite{AT_algtop}, which transforms a multi-critical filtration to a $1$-critical one. However, notice that this transformation  increases the input size depending on the multiplicity of the incomparable birth times of the simplices. 
For $1$-critical filtrations, each module $C_p$ is free. With a fixed basis for each free module $C_p$, a concrete matrix $[\partial_p]$ for each boundary morphism $\partial_p$ based on the chosen bases can be constructed.



With this input, we discuss our strategies for different cases that depend on two parameters, $d$, the number of parameters of filtration function, and $p$, the dimension of the homology groups in the persistence modules.

We already stated that a simplicial filtration induces a persistence module. Here,
we first give the details of this construction.
Recall that
    a ($d$-parameter) \emph{simplicial filtration} is a family of simplicial complexes $\{X_\mathbf{u}\}_{\mathbf{u}\in \Int^d}$ such that for each grade $\mathbf{u}\in \Int^d$ and each $i=1,\cdots, d$, $X_\mathbf{u}\subseteq X_{\mathbf{u}+e_i}$.
    A \emph{$d$-parameter persistence module} is a graded $R$-module where the vector spaces $M_\mathbf{u}$ are homology groups and linear maps among them are
induced by a $d$-parameter simplical filtration.

We obtain a
simplicial chain complex $(C_{\dotr}(X_\mathbf{u}), \partial_{\dotr})$ 
for each $X_\mathbf{u}$ 
in this simplicial filtration. For each comparable pairs in the grading $\mathbf{u}\leq \mathbf{v}\in\Int^d$, 
a family of inclusion maps $C_{\dotr}(X_\mathbf{u})\hookrightarrow C_{\dotr}(X_\mathbf{v})$ 
is induced by the canonical inclusion $X_\mathbf{u}\hookrightarrow X_\mathbf{v}$ giving rise to the following diagram:
\begin{center}
\begin{tikzcd}
C_{\dotr}(X_\mathbf{u}):\; \cdots \arrow[r, "\partial_{p+2}"] \arrow[d, hook] & C_{p+1}(X_\mathbf{u}) \arrow[r, "\partial_{p+1}"] \arrow[d, hook] & C_{p}(X_\mathbf{u}) \arrow[r, "\partial_{p}"] \arrow[d, hook] & C_{p-1}(X_\mathbf{u}) \arrow[r, "\partial_{p-1}"] \arrow[d, hook] & \cdots \\
C_{\dotr}(X_\mathbf{v}):\;\cdots \arrow[r, "\partial_{p+2}"]                  & C_{p+1}(X_\mathbf{v}) \arrow[r, "\partial_{p+1}"]                 & C_{p}(X_\mathbf{v}) \arrow[r, "\partial_{p}"]                 & C_{p-1}(X_\mathbf{v}) \arrow[r, "\partial_{p-1}"]                 & \cdots
\end{tikzcd}
\end{center}


For each chain complex $C_{\dotr}(X_\mathbf{u})$, we have the cycle spaces $Z_p(X_{\mathbf{u}})$'s and boundary spaces $B_p(X_{\mathbf{u}})$'s as kernels and images of boundary maps $\partial_p$'s respectively, and the homology group $H_p(X_{\mathbf{u}})=Z_p(X_{\mathbf{u}})/B_p(X_{\mathbf{u}})$ as the cokernel of the inclusion maps $B_p(X_{\mathbf{u}})\hookrightarrow Z_p(X_{\mathbf{u}})$.
In line with category theory we use the notations $\Img$, $\ker$, $\cok$ for indicating both the modules of kernel, image, cokernel and the corresponding morphisms uniquely determined by their constructions\footnote{e.g. $\ker\partial_p$ denotes the inclusion of $Z_p$ into $C_p$}. We obtain the following commutative diagram:
\begin{center}
\begin{tikzcd}
                                                                                                                  & B_p(X_{\mathbf{u}}) \arrow[r, hook] & Z_p(X_{\mathbf{u}}) \arrow[rd, "\ker \partial_p" description, hook] \arrow[r, "\cok" description, two heads] & H_p(X_{\mathbf{u}})         \\
\cdots C_{p+1}(X_{\mathbf{u}}) \arrow[rrr, "\partial_{p+1}"] \arrow[ru, "\Img{\partial_{p+1}}" description, two heads] &                                &                                                                                                          & C_p(X_{\mathbf{u}})\;\cdots
\end{tikzcd}
\end{center}

In the language of graded modules, for each $p$, the family of vector spaces and linear maps (inclusions) $(\{C_p(X_\mathbf{u})\}_{\mathbf{u}\in \Int^d}, \{C_p(X_\mathbf{u})\hookrightarrow C_p{(X_\mathbf{v})}\}_{\mathbf{u}\leq\mathbf{v}})$ can be summarized
as a $\Int^d$-graded $R$-module:
\[
C_p(X):=\bigoplus_{\mathbf{u}\in \Int^d} C_p(X_\mathbf{u}),
\mbox{ with the ring action } t_i\cdot C_p(X_{\mathbf{u}}): C_p(X_{\mathbf{u}}) \hookrightarrow C_p(X_{\mathbf{u}+e_i})\,\,\forall i, \,\forall \mathbf{u}.
\]
That is, the ring $R$ acts as the linear maps (inclusions) between pairs of vector spaces in $C_p(X_{\dotr})$ with comparable grades. It is not too hard to check that this $C_p(X_{\dotr})$ is indeed a graded module. Each $p$-chain in a chain space $C_p(X_\mathbf{u})$ is a homogeneous element with grade $\mathbf{u}$. 
Then we have a chain complex of graded modules $(C_*(X), \partial_*)$  where $\partial_*:C_*(X)\rightarrow C_{*-1}(X)$ is the boundary morphism given by $\partial_*\triangleq\bigoplus_{\mathbf{u}\in \Int^d} \partial_{*,\uu}$ with 
 $\partial_{*,\uu}:C_*(X_\mathbf{u})\rightarrow C_{*-1}(X_\mathbf{u})$ being the boundary map on $C_*(X_\mathbf{u})$.



The kernel and image of a graded module morphism are also graded modules as submodules of domain and codomain respectively whereas
the cokernel is a quotient module of the codomain. They can also be defined grade-wise in the expected way: 
\[
\mbox{For } f:M\rightarrow N, (\ker f)_{\mathbf{u}}=\ker f_\mathbf{u}, (\Img f)_\mathbf{u} = \Img f_\mathbf{u}, (\cok f)_{\mathbf{u}}=\cok f_\mathbf{u}. 
\]
All the linear maps are naturally induced from the original linear maps in $M$ and $N$.
 In our chain complex cases, the kernel and image of the boundary morphism $\partial_p: C_{p}(X)\rightarrow C_{p-1}(X)$ is the family of cycle spaces $Z_p(X)$ and family of boundary spaces $B_{p-1}(X)$ respectively with linear maps induced by inclusions. Also, from the inclusion induced morphism $B_p(X) \hookrightarrow Z_{p}(X)$, we have the cokernel module $H_p(X)$, consisting of homology groups $\bigoplus_{\uu\in\Int^d} H_p(X_{\mathbf{u}})$ and linear maps induced from inclusion maps  $X_\uu\hookrightarrow X_\vv$ for each comparable pairs $\uu\leq \vv$. This $H_p(X)$ is an example of \emph{persistence module} $M$ we mentioned in the beginning of this section, which we will study.
 It is called a persistence module $M$ because not only does it encode the information of homology groups by each graded component $M_{\mathbf{u}}$, but, roughly speaking, also tracks birth, death, merging and persistence of the homological cycles through all admissible linear maps $M_{\mathbf{u}}\rightarrow M_{\mathbf{v}}, \forall \mathbf{u}\leq \mathbf{v}$.
Classical persistence modules arising from a filtration of a simplicial complex over $\Int$ is an example of a $1$-parameter persistence module where the action
$t_1\cdot  M_\mathbf{u}\subseteq M_{\mathbf{u}+e_1}$ signifies the linear map $M_\mathbf{u}\rightarrow M_{\mathbf{v}}$ between homology groups induced by the inclusion of the complex at $\mathbf{u}$ into the complex at $\mathbf{v}=\mathbf{u}+e_1$.


In our case, we have chain complex of graded modules and induced homology groups which can be succinctly described by the following diagram:\\
\begin{center}
\begin{tikzpicture}[baseline= (a).base]
\node[scale=.9] (a) at (0,0){
\begin{tikzcd}
                                                                                                                  & B_p(X) \arrow[r, hook] & Z_p(X) \arrow[rd, "\ker(\partial_p)" description, hook] \arrow[r, two heads] & H_p(X)                                                                                      & B_{p-1}(X) \arrow[r, hook] & Z_{p-1}(X) \arrow[r, two heads] \arrow[rd, "\ker \partial_{p-1}" description, hook] & H_{p-1}(X)       \\
\cdots C_{p+1}(X) \arrow[rrr, "\partial_{p+1}"] \arrow[ru, "\Img{\partial_{p+1}}" description, two heads] &                                &                                                                                      & C_p(X) \arrow[rrr, "\partial_{p}"] \arrow[ru, "\Img \partial_{p}" description, two heads] &                                    &                                                                                             & C_{p-1}(X)\cdots
\end{tikzcd}
};
\end{tikzpicture}
\end{center}

Now we show how to compute presentations of persistence modules.

Note that a presentation gives an exact sequence $F^1\rightarrow F^0\twoheadrightarrow H\rightarrow0$.
To reveal further details of a presentation of $H$, we recognize that it respects the following commutative diagram,

\begin{center}
\begin{tikzcd}
                                                                    & Y^1 \arrow[rd, "\ker f^0" description, hook] &                                                       &  &   \\
F^1 \arrow[rr, "f^1"] \arrow[ru, "\Img f^1" description, two heads] &                                              & F^0 \arrow[rr, "f^0=\cok f^1", two heads] &  & H
\end{tikzcd}
\end{center}

where $Y^1\hookrightarrow F^0$ is the kernel of $f^0$.
With this diagram being commutative, all maps in this diagram are essentially determined by the presentation map $f^1$. We call the surjective map $f^0: F^0\rightarrow H$ \emph{generating map}, and $Y^1=\ker{f^0}$ the $1^{st}$ \emph{syzygy module} of $H$.

We introduce the following useful properties of graded modules which are used in the justifications later. They are similar to Proposition $(1.3)$ in Chapter 6 of~\cite{cox2006usingAG}.

\begin{fact}\label{fact:cox_equivalence_morphism_generators}
Let $M$ be a persistence module.
\begin{enumerate}
    \item Choosing a homogeneous element in $M$ with grade $\mathbf{u}$ is equivalent to choosing a morphism $R_{\rightarrow \mathbf{u}}\rightarrow M$.
    \item Choosing a set of homogeneous elements in $M$ with grades $\mathbf{u}_1, \cdots, \mathbf{u}_n$ is equivalent to choosing a morphism $\bigoplus_{i=1}^{n} R_{\rightarrow \mathbf{u}_i}\rightarrow M$.
    \item  Choosing a generating set of $M$ consisting of $n$ homogeneous elements with grades $\mathbf{u}_1, \cdots, \mathbf{u}_n$ is equivalent to choosing a surjective morphism
    $\bigoplus_{i=1}^{n} R_{\rightarrow  \mathbf{u}_i}\twoheadrightarrow M$.
    \item If $M\simeq \bigoplus_i R_{\rightarrow \mathbf{u}_i}$ is a free module, choosing a basis of $M$ is equivalent to choosing an isomorphism $\bigoplus R_{\rightarrow \mathbf{u}_i}\rightarrow M$.
\end{enumerate}

\end{fact}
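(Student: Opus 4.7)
The plan is to prove the four parts sequentially, each leveraging the previous, and resting on one central observation: the module $R_{\rightarrow \mathbf{u}}$ is the free graded module on a single generator---namely, the multiplicative identity $1 \in R$ sitting in grade $\mathbf{u}$ after the shift. Write $e^{\mathbf{u}}$ for this generator. Since graded morphisms must commute with the ring action, a morphism out of $R_{\rightarrow \mathbf{u}}$ is completely controlled by where $e^{\mathbf{u}}$ is sent.

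For (1), I would argue both directions explicitly. Given a graded morphism $\phi: R_{\rightarrow \mathbf{u}} \to M$, the grade-preserving property forces $\phi(e^{\mathbf{u}}) \in M_{\mathbf{u}}$, giving a homogeneous element of grade $\mathbf{u}$. Conversely, given homogeneous $m \in M_{\mathbf{u}}$, define $\phi(r):= r \cdot m$ for $r \in R_{\rightarrow \mathbf{u}}$. This is $R$-linear by the module axioms on $M$, and it preserves grades because the shifted piece $(R_{\rightarrow \mathbf{u}})_{\mathbf{v}}$ is spanned by $\mathbf{t}^{\mathbf{v}-\mathbf{u}}$ (for $\mathbf{v} \geq \mathbf{u}$), which the ring action on $M$ sends into $M_{\mathbf{v}}$. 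The two constructions are mutually inverse. Part (2) then follows immediately from the universal property of the direct sum of graded modules: a morphism out of $\bigoplus_{i=1}^{n} R_{\rightarrow \mathbf{u}_i}$ is uniquely determined by its restriction to each summand, and by (1) each restriction corresponds to a homogeneous element of the prescribed grade.

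For (3), I would observe that under the correspondence in (2), the image of the morphism $\phi: \bigoplus_i R_{\rightarrow \mathbf{u}_i} \to M$ consists precisely of $R$-linear combinations of $\{\phi(e^{\mathbf{u}_i})\}$. Hence $\phi$ is surjective if and only if this set generates $M$. For (4), starting from (3), any basis of $M$ is in particular a homogeneous generating set of the right cardinality, yielding a surjective graded morphism $\phi$. The additional content of being a basis---$R$-linear independence---translates, under the correspondence, to $\sum_i r_i \phi(e^{\mathbf{u}_i}) = 0 \Rightarrow$ each $r_i = 0$, which is exactly $\ker \phi = 0$; hence $\phi$ is an isomorphism. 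Conversely, any graded isomorphism $\bigoplus_i R_{\rightarrow \mathbf{u}_i} \xrightarrow{\sim} M$ transports the standard basis to a basis of $M$.

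The only real obstacle, though minor, is bookkeeping around the grading: one must verify that each correspondence respects the grading and the ring action throughout, and in (4) that the set-theoretic inverse of a bijective graded morphism is automatically a graded morphism itself. Both checks are routine since morphisms of graded modules are defined grade-wise, so a grade-preserving bijection between direct sums of $\mathbf{k}$-vector spaces automatically has a grade-preserving inverse. The argument therefore reduces to standard universal-property reasoning adapted to the graded setting.
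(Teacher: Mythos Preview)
Your proof is correct and follows the standard universal-property argument one would expect. The paper itself does not prove this statement: it records it as a \emph{Fact}, noting only that it is ``similar to Proposition~(1.3) in Chapter~6'' of Cox's \emph{Using Algebraic Geometry}, so there is no paper-side proof to compare against. Your write-up supplies exactly the routine verification the paper elides.
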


\subsection{Multiparameter filtration, zero-dimensional homology}
In this case $p=0$ and $d>0$. This special case corresponds to determining clusters in the multiparameter setting. Importance of clusters obtained by classical one-parameter persistence has already been recognized in the literature~\cite{persis_clustering_carlsson2008persistent,liu2017visualizing}. Our algorithm computes such clusters in a multiparameter setting. In this case, we obtain a presentation matrix straightforwardly with the observation that the module $Z_0$ of cycle spaces coincides with the module $C_0$ of chain spaces.
\begin{itemize}
  \item  Presentation:
    \begin{tikzcd}
    C_1 \arrow[r, "\partial_1"] & C_0 \arrow[r, "\cok \partial_1", two heads] & H_0
    \end{tikzcd}

    \item Presentation matrix = $[\partial_1]$ is given as part of the input.
 \end{itemize}
 \noindent
 \textbf{Justification}.
 For $p=0$, the cycle module $Z_0=C_0$ is a free module. So we have the presentation of $H$ as follows:
\begin{center}
\begin{tikzcd}
                                                                            & B_0 \arrow[rd, hook] &                                             &     \\
C_1 \arrow[rr, "\partial_1" description] \arrow[ru, "\Img \partial_1" description, two heads] &                      & C_0 \arrow[r, "\cok \partial_1", two heads] & H_0
\end{tikzcd}
\end{center}
It is easy to check that $\partial_1: C_1\rightarrow C_0$ is a presentation of $H_0$ since both $C_1$ and $C_0$ are free modules. With standard basis of chain modules $C_p$'s, we have a presentation matrix $[\partial_1]$ as the valid input to our decomposition algorithm.

The $0^{th}$ homology in our working example~\ref{ex:working_example} corresponds to this case. The presentation matrix is the same as the matrix of boundary morphism $\partial_1$.



For convenience, we introduce a compact description of a presentation $f^1:F^1\rightarrow F^0$ of a module $H$. 
We write $H=<g_1,\cdots, g_n: s_1, \cdots, s_m>$ where $\{g_i\}$ is a chosen basis of $F^0$ and $\{s_j\}$ is a chosen generating set of $\Img f^1\subseteq F^0$ of $F^0$. In the working example~\ref{ex:working_example}, we can write $H_0=<v_b^{(0,1)}, v_r^{(1,0)}, v_g^{(1,1)}: \partial_1(e_r^{(1,1)}), \partial_1 (e_b^{(1,2)}), \partial_1 (e_g^{(2,1)})>$.

\subsection{2-parameter filtration, multi-dimensional homology}
\label{sec:2param}
 In this case, $d=2$ and $p\geq 0$.
 Lesnick and Wright~\cite{Lesnick_compute_minimal_present_19} have presented an algorithm to compute a presentation, in fact a minimal presentation, for this case. We restate some of their observations for completeness here.
 When $d=2$, by Hilbert Syzygy Theorem~\cite{hilbert1890ueber}, the kernel of a morphism between two free graded modules is always free.
 This implies that the canonical surjective map $Z_p\twoheadrightarrow H_p$ from free module $Z_p$ can be naturally chosen as a generating map in the presentation of $H_p$.
 In this case we have:
 \begin{itemize}
   \item Presentation:
    \begin{tikzcd}
    C_{p+1} \arrow[r, "\bar{\partial}_{p+1}"] & Z_p \arrow[r, "\cok \bar{\partial}_{p+1}", two heads] & H_p
    \end{tikzcd}
    where $\bar{\partial}_{p+1}$ is the induced map from the diagram:
    \begin{center}
    \begin{tikzcd}
                                                                                                                                                              & B_p \arrow[r, hook] & \underline{Z_p} \arrow[rd, "\ker \partial_p" description, hook] \arrow[r, two heads] & \underline{H_p} \\
    \underline{C_{p+1}} \arrow[rrr, "\partial_{p+1}"'] \arrow[ru, "\Img{\partial_{p+1}}" description, two heads] \arrow[rru, "\bar{\partial}_{p+1}"', dashed] &                     &                                                                                      & C_p
    \end{tikzcd}
    \end{center}
    \item Presentation matrix = $[\bar{\partial}_{p+1}]$ is constructed as follows:
    \begin{enumerate}
        \item Compute a basis $G(Z_p)$ for the free module $Z_p$ where $G(Z_p)$ is presented as a set of generators in the basis of $C_p$. This can be done by an algorithm in~\cite{Lesnick_compute_minimal_present_19}.
        Take $G(Z_p)$ as the row basis of the presentation matrix $[\bar{\partial}_{p+1}]$.
        \item Present $\Img\partial_{p+1}$ in the basis of $G(Z_p)$ to get the presentation matrix $[\bar\partial_{p+1}]$ of the induced map as follows. Originally, $\Img\partial_{p+1}$ is presented in the basis of $C_p$ through the given matrix $[\partial_{p+1}]$. One needs to rewrite each column of $[\partial_{p+1}]$ in the basis $G(Z_{p})$ computed in the previous step. This can be done as follows. Let $[G(Z_p)]$ denote the matrix presenting basis elements in $G(Z_p)$ in the basis of $C_p$. Let $c$ be any column vector in $[\partial_{p+1}]$. We reduce $c$ to zero vector by the matrix
        $[G(Z_p)]$ and note the columns that are added to $c$. These columns provide the necessary presentation of $c$ in the basis $G(Z_p)$.
        This reduction can be done through the traditional persistent algorithm~\cite{edelsbrunner2010computational}.
    \end{enumerate}
 \end{itemize}
\noindent
{\bf Justification}.
Unlike $p=0$ case, for $p>0$, we just know $Z_p$ is a (proper) submodule of $C_p$, which means that $Z_p$ is not necessarily equal to the free module $C_p$. However, fortunately for $d=2$, the module $Z_p$ is free, and we have an efficient algorithm to compute a basis of $Z_p$ as the kernel of the boundary map $\partial_p: C_p\rightarrow C_{p-1}$. Then, we can construct the following presentation of $H_p$:
\begin{center}
\begin{tikzcd}
           &                                                                                    & B_p \arrow[rd, hook] &                                             &               &   \\
 \arrow[r] & C_{p+1} \arrow[rr, "\bar{\partial}_{p+1}" description] \arrow[ru, "\Img \partial_{p+1}" description, two heads] &                      & Z_p \arrow[r, "\cok \bar{\partial}_{p+1}", two heads] & H_p \arrow[r] & 0
\end{tikzcd}
\end{center}
Here the $\bar{\partial}_{p+1}$ is an induced map from $\partial_{p+1}$. With a fixed basis on $Z_p$ and standard basis of $C_{p+1}$, we rewrite the presentation matrix $[{\partial}_{p+1}]$ to get $[\bar{\partial}_{p+1}]$, which constitutes a valid input to our decomposition algorithm.

\begin{figure}
    \centering
    \includegraphics[page=2, width=0.5\textwidth]{./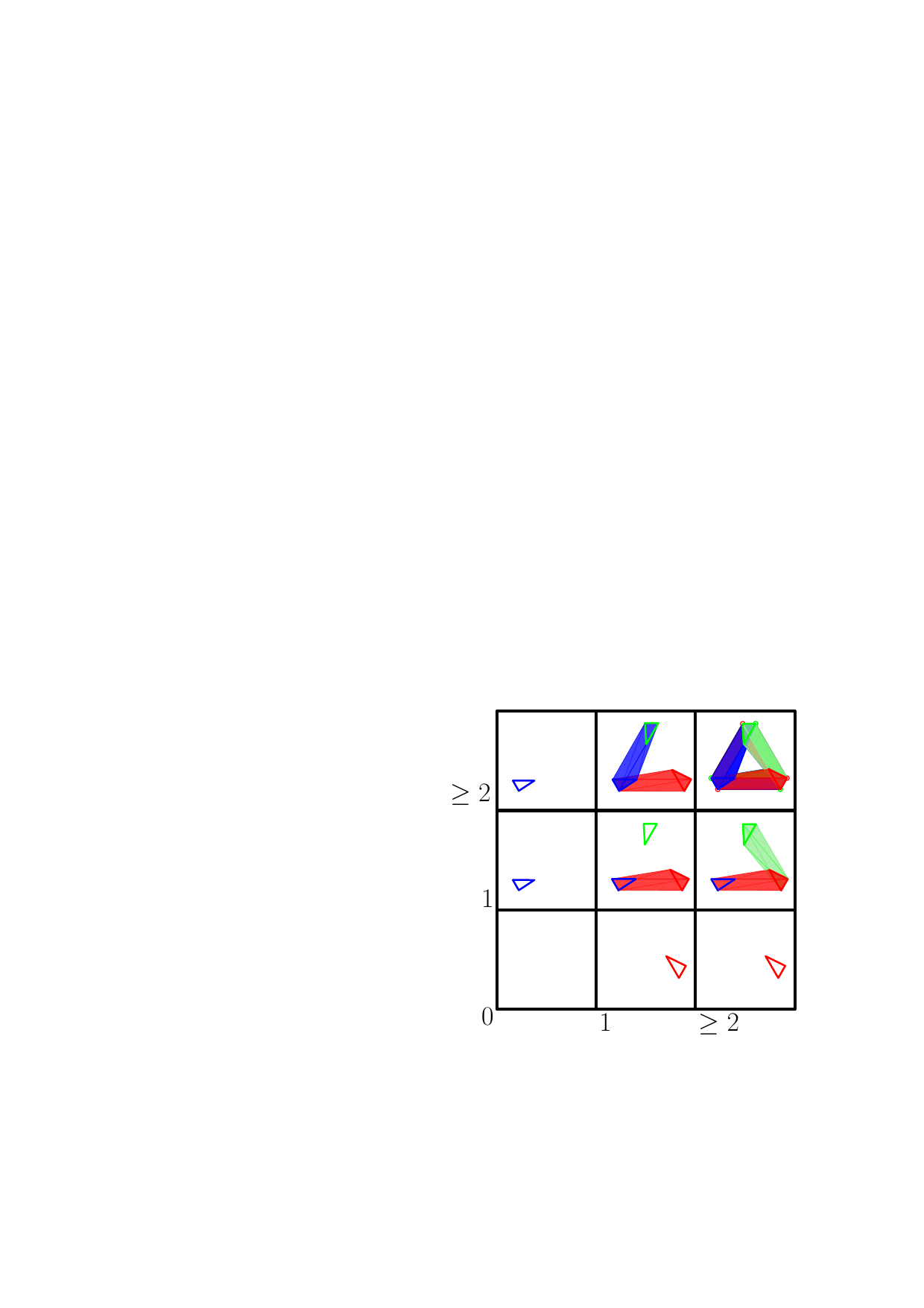}
    \caption{An example of $2$-parameter simplicial filtrations. Each square box indicates what is the current (filtered) simplical complex at the grade of the box. This example has one nontrivial cycle in $1$st homology groups at grades except $(0,0), (1,1), (2,2)$, and has two nontrivial cycles at grades $(1,1)$ and $(2,2)$. Note that all tunnels connecting triangles are hollow.}
    \label{fig:example2}
\end{figure}

\begin{example}
Consider the simplicial complex described in Figure~\ref{fig:example2}.
This is a hollow torus consisting of three empty triangles on three corners and each pair of triangles is connected by a hollow tunnel. This example is quite similar to the working example if we view the red, blue, green triangles as three generators in the $H_1$ persistence homology and three tunnels as relations connecting them. Then, we get an almost same presentation except that at grade $(2,2)$, the triangular torus introduces a new cycle which is different from any previous generators. For fixed bases of $Z_1$ and $B_1$, we can build the presentation matrix of $\bar\partial_2$. After doing some basic reduction, it can be shown that this presentation matrix is equivalent to:

$$
\bordermatrix{[\bar{\partial}_2]  &s_r^{(1,1)}            &   s_b^{(1,2)}         &  s_g^{(2,1)}          \cr
                g_b^{(0,1)} & \mathbf{t}^{(1,0)}    &   \mathbf{t}^{(1,1)}   & 0                    \cr
                g_r^{(1,0)} & \mathbf{t}^{(0,1)}    &   0                   & \mathbf{t}^{(1,1)}    \cr
                g_g^{(1,1)} & 0                     &   \mathbf{t}^{(0,1)}   & \mathbf{t}^{(1,0)}   \cr
                g_{\infty}^{(2,2)} & 0                     &   0 & 0               }
$$

where $ g_r^{(0,1)},  g_b^{(1,0)},  g_r^{(1,1)} $ represent the three triangles at the corners and $g_\infty^{(2,2)}$ represents the new cycle generated by the torus; images of $s_r^{(1,1)}         ,  s_b^{(1,2)}, s_g^{(2,1)}$ under $\bar\partial_{2}$  represent the boundaries of three tunnels.
\end{example}
 \subsection{Multiparameter filtration, multi-dimensional homology}
Now we consider the most general case where $p>0$ and $d>0$. The issue is that now $Z_p$ is not free. So, it cannot be chosen as the $0$th free module $F^0$ in the presentation of $H_p$. In what follows, we drop the index $p$ from all modules for simplicity. We propose the following procedure to construct the presentation of $H_p$. Here we use lower indices for morphisms $f_0$ and $f_1$ between free modules in presentations instead of upper indices as in $f^0$ and $f^1$ in order to write the inverse $f_i^{-1}$ of a map $f_i$ more clearly.
 \begin{itemize}
    \item

    Presentation is constructed as follows:
    \begin{enumerate}
        \item Construct a minimal presentation of $Z$ with $1^{st}$ syzygy module $Y^1$:
        \begin{center}
        \begin{tikzcd}
                                                                            & Y^1 \arrow[rd, "\ker f_0" description, hook] &                                 &   \\
        F^1 \arrow[rr, "f_1"] \arrow[ru, "\Img f_1" description, two heads] &                                              & F^0 \arrow[r, "f_0", two heads] & Z
        \end{tikzcd}
         \end{center}
        \item With the short exact sequence
        \begin{tikzcd}
        B \arrow[r, hook] & Z \arrow[r, "\pi", two heads] & H
        \end{tikzcd},
       construct the presentation of $H$:
       \begin{center}
        \begin{tikzcd}
                                                                                                                                              & f_0^{-1}(B) \arrow[rd, "\ker (\pi\circ f_0)" description, hook] &                                          &   \\
         F^1\oplus C \arrow[rr, "{\ker(\pi\circ f_0)\circ(\Img f_1+\Img\partial)}"] \arrow[ru, "{\Img f_1+\Img\partial}" description, two heads] &                                                                   & F^0 \arrow[r, "\pi\circ f_0", two heads] & H
        \end{tikzcd}
        \end{center}

        where $\pi\circ f_0$ is the composition of surjective morphisms
        \begin{tikzcd}
        F^0 \arrow[r, "f_0", two heads] & Z \arrow[r, "\pi", two heads] & H
        \end{tikzcd};
        the inclusion map $f_0^{-1}(B) \hookrightarrow F^0$
        is given by the kernel map $\ker(\pi\circ f_0)$; the surjective map $\Img f_1+\Img\partial: F^1\oplus C\twoheadrightarrow f_0^{-1}(B)$ is induced by the following diagram:
        \begin{center}
        \begin{tikzcd}
        0 \arrow[r] & F^1 \arrow[d, "\Img f_1"', two heads] \arrow[r, hook] & F^1\oplus C \arrow[r, two heads] \arrow[d, "{\exists \Img f_1+\Img\partial}" description, two heads, dashed] & C \arrow[d, "\Img\partial", two heads] \arrow[r] & 0 \\
        0 \arrow[r] & Y^1 \arrow[r, hook]                                   & f_0^{-1}(B) \arrow[r, two heads]                                                                       & B \arrow[r]                                        & 0
        \end{tikzcd}
        \end{center}
        where $\Img\partial:C\twoheadrightarrow B$ is the canonical surjective map induced from boundary map $\partial$.

        And finally, the presentation map $ F^1\oplus C\rightarrow F^0$ is just the composition $\ker(\pi\circ f_0)\circ(\Img f_1 + \Img\partial)$.
    \end{enumerate}

    Presentation matrix = $[\ker(\pi\circ f_0)\circ(\Img f_1 +\Img\partial)]$ is constructed as follows:

    \begin{enumerate}
        \item Construct a presentation matrix $[\bar{\partial}]$ the same way as in the previous case.
        \item Compute for $Y^1$ a minimal generating set $G(Y^1)$ in the basis of $G(Z)$. Let $[G(Y^1)]$ be the resulting matrix. Combine $[\bar{\partial}]$ with $[G(Y^1)]$ from right to get a larger matrix $[G(Y^1) \mid \bar{\partial}]$.
    \end{enumerate}
  \end{itemize}

\noindent
\textbf{Justification}.
First, we take a presentation of $Z$,
\begin{center}
\begin{tikzcd}
                                            & Y^1 \arrow[rd, hook] &                          &   \\
F^1 \arrow[rr, "f_1"] \arrow[ru, two heads] &                      & F^0 \arrow[r,"f_0", two heads] & Z
\end{tikzcd}
\end{center}
Here $Y^1$ is the 1st syzygy module of $Z$.
Combining it with the short exact sequence $B\hookrightarrow Z\twoheadrightarrow H$, we have,
\begin{center}
\begin{tikzcd}
f_0^{-1}(B) \arrow[r, dashed, hook] \arrow[d, two heads, dashed] & F^0 \arrow[d, "f_0", two heads] \arrow[rd, "\bar{f_0}=\pi\circ f_0", two heads, dashed] &   \\
B \arrow[r, hook]                                              & Z \arrow[r, "\pi", two heads]                                      & H
\end{tikzcd}
\end{center}
The map $\bar{f_0}=\pi\circ f_0$ is a composition of surjections and thus is a surjection from a free module $F^0$ to $H$, which is a valid candidate for the 0th free module of a presentation of $H$. Observe that the 1st syzygy module of $H$, $\ker\bar{f_0}=\ker(\pi\circ f_0)=f_0^{-1}(\ker\pi)=f_0^{-1}(B)$, and that $f_0^{-1}(B)$ can be constructed as the pullback of the maps from $B,F^0$ to $Z$. The left square commutative diagram preserves the inclusion and surjection in parallel.

Now the only thing left is to find a surjection from a free module to $f_0^{-1}(B)$. First, by the property of pullback, we know that $\ker f_0=\ker (f_0^{-1}(B)\rightarrow B)$ in a commutative way. It implies that the following diagram commutes.
\begin{center}
\begin{tikzcd}
Y^1 \arrow[d, "\ker g" description] \arrow[rd, "\ker f_0" description] &                                 \\
f_0^{-1}(B) \arrow[r, hook] \arrow[d, "g"', two heads]                 & F^0 \arrow[d, "f_0", two heads] \\
B \arrow[r, hook]                                                      & Z
\end{tikzcd}
\end{center}
Now focus on the left vertical line of the above commutative diagram. We have a short exact sequence $Y^1\hookrightarrow f_0^{-1}(B)\twoheadrightarrow B$. By the horseshoe lemma (see lemma 2.2.8 in~\cite{weibel1995introduction} for details), we can build the generating set of $f_0^{-1}(B)$ as illustrated in the following diagram:
        \begin{center}
        \begin{tikzcd}
        0 \arrow[r] & F^1 \arrow[d, "\Img f_1"', two heads] \arrow[r, hook] & F^1\oplus C \arrow[r, two heads] \arrow[d, "{\exists \Img f_1+\Img\partial}" description, two heads, dashed] & C \arrow[d, "\Img\partial", two heads] \arrow[r] & 0 \\
        0 \arrow[r] & Y^1 \arrow[r, hook]                                   & f_0^{-1}(B) \arrow[r, two heads]                                                                       & B \arrow[r]                                        & 0
        \end{tikzcd}
        \end{center}
The left projection $F^1\twoheadrightarrow Y^1$ comes from the previous presentation of $Z$. The $C\twoheadrightarrow B$ is the image map induced from boundary map $\partial:C_{p+1}\rightarrow C_p$. We take the direct sum of $F^1\oplus C$ and the horseshoe lemma indicates that there exists a projection $F^1\oplus C \twoheadrightarrow f_0^{-1}(B)$ making the whole diagram commute.
So finally, we have the valid presentation of $F^1\oplus C \rightarrow F^0\twoheadrightarrow H$.

Now we identify a generating set of $f^{-1}_0(B)$ that helps us constructing a matrix for the presentation of $H$. From the surjection $F^1\oplus C \rightarrow f^{-1}_0(B)$ in the above commutative diagram, one can see that the combination of generators from $B=\Img\partial$ and $Y^1 = \Img f_1$ forms a generating set of $ f^{-1}_0(B)$. The generators from $B=\Img\partial$ can be computed as in the previous case, which results in the matrix $[\bar\partial]$. The generators $G(Y^1)$ from $Y^1=\Img f_1$ are obtained as a result of computing the presentation of $Z$, which can be done by an algorithm presented in Skryzalin's thesis~\cite{skryzalin}. Combining these two together, we get the presentation matrix $[\bar{\partial},G(Y^1)]$ of $H$  as desired. So, now we have the solutions for all general cases. 

The above construction of presentation matrix can be understood as follows. The issue caused by non-free $Z$ is that, if we use the same presentation matrix as we did in the previous case with free $Z$, we may lose some relations coming from the inner relations of a generating set of $Z$. We fix this problem by adding these inner relations into the presentation matrix.

Figure \ref{fig:example_3} shows a simple example of a filtration of simplicial complex whose persistence module $H$ for $p=1$ is a quotient module of non-free module $Z$. The module $H$ is generated by three $1$-cycles presented as $g_1^{(0,1,1)}, g_2^{(1,0,1)}, g_3^{(1,1,0)}$. But when they appear together in $(1,1,1)$, there is a relation between these three: $\mathbf{t}^{(1,0,0)} g_1^{(0,1,1)}+ \mathbf{t}^{(0,1,0)} g_2^{(1,0,1)} + \mathbf{t}^{(0,0,1)} g_3^{(1,1,0)}=0$. Although $\Img \partial_1=0$, we still have a nontrivial relation from $Z$. So, we have
$H=<g_1^{(0,1,1)}, g_2^{(1,0,1)}, g_3^{(1,1,0)}:   s^{(1,1,1)}= \mathbf{t}^{(1,0,0)} g_1^{(0,1,1)}+ \mathbf{t}^{(0,1,0)} g_2^{(1,0,1)} + \mathbf{t}^{(0,0,1)} g_3^{(1,1,0)}>$. The presentation matrix turns out to be the following:

$$
\bordermatrix{
 & s^{(1,1,1)} \cr
g_1^{(0,1,1)} & \mathbf{t}^{(1,0,0)} \cr
g_2^{(1,0,1)} & \mathbf{t}^{(0,1,0)} \cr
g_3^{(1,1,0)} & \mathbf{t}^{(0,0,1)}
}
$$

\begin{figure}
 \centerline{\includegraphics[page=13, width=.5\linewidth]{./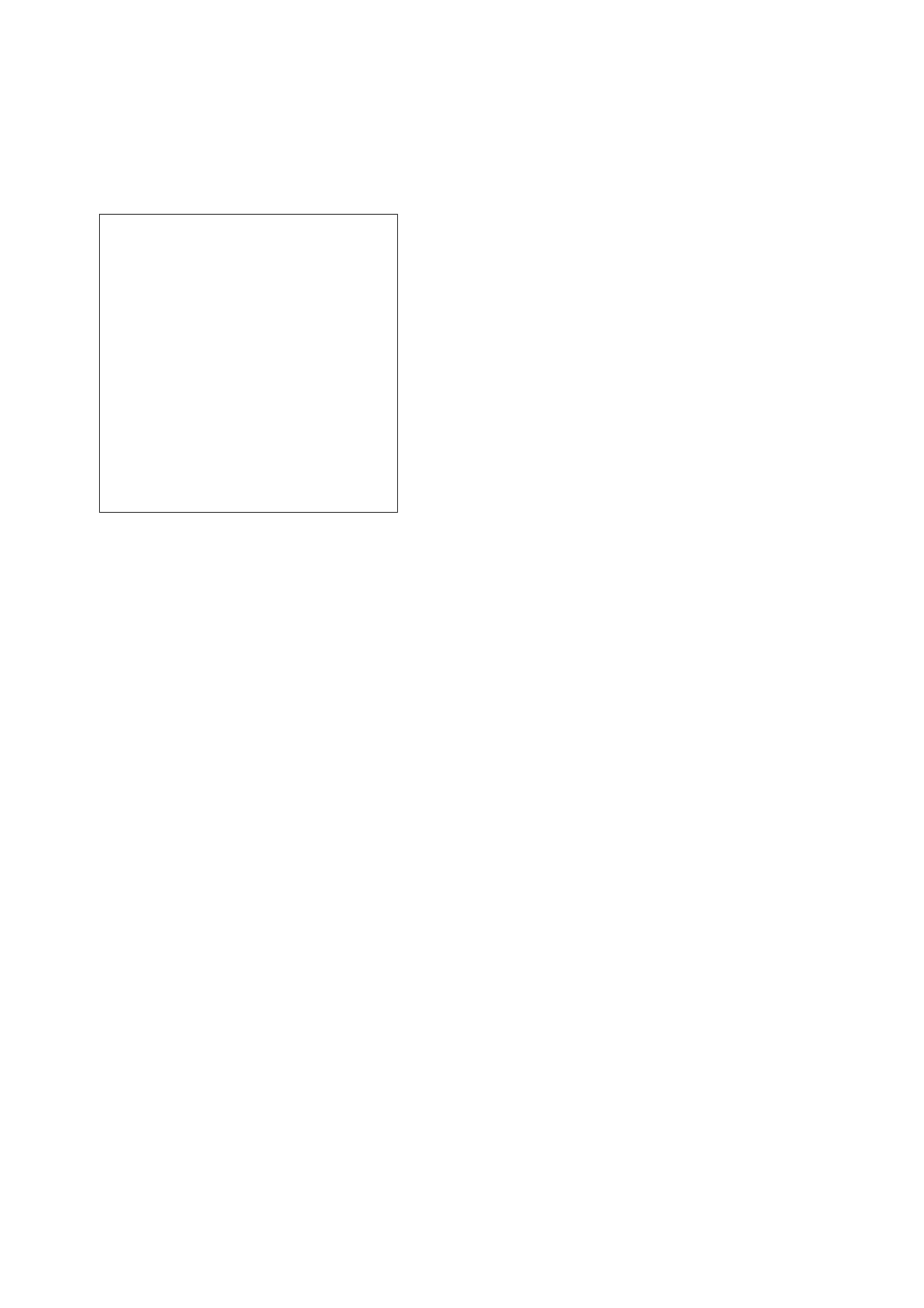}}
 \caption{An example of a filtration of simplicial complex for $d=3$ with non-free $Z$ when $p=1$. The three red circles are three generators in $Z_1$. However, at grading $(1,1,1)$, the earliest time these three red circles exist simultaneously, there is a relation among these three generators.}
 \label{fig:example_3}
\end{figure}

\subsection{Time complexity}
Now we consider the time complexity for computing presentation and decomposition together.
Let $n$ be the size of the input filtration, that is, total number of simplices obtained by counting new simplices added to the filtration (at most one new simplex at a grid point of $\mathbb{Z}^d$).
We consider three different cases as before:

\vspace{0.1in}
\noindent
\textbf{Multiparameter, $0$-dimensional homology}: In this case,
the presentation matrix $[\partial_1]$ where $\partial_1:C_1\rightarrow C_0$
has size $O(n)\times O(n)$. So, we can take $N=O(n)$ in Eqn.~\eqref{eq:time_complexity_matrix_reduction} for the time complexity analysis
of decomposition. Therefore, the total time complexity for this case is $O(n^{2\omega+1})$.

\vspace{0.1in}
\noindent
\textbf{$2$-parameter, multi-dimenisonal homology}: In this case, as described in section~\ref{sec:2param}, first we compute a basis $G(Z_p)$ that is presented in the basis of $C_p$. This is done by the algorithm of  Lesnick and Wright~\cite{Lesnick_compute_minimal_present_19} (henceforth called LW-algorithm) which runs in $O(n^3)$ time.
Using $[G(Z_p)]$, we compute the presentation matrix $[\bar\partial_{p+1}]$ as described in section~\ref{sec:2param}. This can be done in $O(n^3)$ time assuming that $G(Z_p)$ has at most $n$ elements. The presentation matrix is decomposed with {\sc TotDiagonalize} as in the previous case. However, to claim that
it runs in $O(n^{2\omega+1})$ time, one needs to ensure that the basis $G(Z_p)$ has $O(n)$ elements. This follows from the fact that $Z_p$ is a free submodule of the free module $C_p$ which has rank $n$.

\vspace{0.1in}
\noindent
\textbf{Multiparameter, multi-dimenisonal homology}:
In this case, we need to compute a generating set $G(Z)$ for $Z=Z_p$ and then a generating set $G(Y^1)$ for the 1st Syzygy module $Y^1$. Both of these generating sets can be computed by the algorithm of Skryzalin~\cite{skryzalin}(Theorem 2.6.4). The algorithm considers $O(n^{d-2})$ 
slices of $2$-parameter modules and generates the basis for each of them in
$O(n^{d+1})$ total time. This also implies that the size of the basis the algorithm computes is at most $O(n\cdot n^{d-2})=O(n^{d-1})$.

Next, we compute a generating set $G(Y^1)$ for the syzygy module $Y^1$. Recall that $Y^1\stackrel{\ker f_0}{\hookrightarrow}Z_p$ where $F_0\stackrel{f_0}{\rightarrow} Z_p$. Taking the generating set computed in the previous step as a basis for $F_0$ and observing that $\ker f_0=\ker \bar{f}_0$ where $F_0\stackrel{f_0}{\rightarrow} Z_p\stackrel{i}{\hookrightarrow} C_p$ and $\bar f_0=i\circ f_0$ we can compute a generating set $G(Y^1)$ in terms of a basis of $G(Z_p)$ using the algorithm of Skryzalin again. Again, each of
the $O(n^{d-2})$ $2$-parameter slices will generate at most $O(n)$ basis elements giving a total of $O(n^{d-1})$ basis elements. 

The matrix $[G(Y^1)]$ appended with the matrix $[\bar{\delta}_p]$, becomes the presentation matrix for $H_p$ of size $O(n^{d-1})\times O(n^{d-1})$. The decomposition algorithm on such a matrix takes at most $O(n^{(d-1)(2\omega+1)})$.

In summary, we have the following time complexity:
\begin{itemize}
    \item $d$-parameter 0-dimensional case:
        $O(n^{2\omega+1})$.
    \item $d$-parameter multi-dimensional case(general case):
       $O(n^{d+1}) + O(n^{(d-1)(2\omega+1)})=O(n^{(d-1)(2\omega+1)})$.
\end{itemize}

\section{Persistent graded Betti numbers and blockcodes} \label{sec:Persistent graded Betti numbers and blockcodes}
For $1$-parameter persistence modules, the traditional persistence algorithm computes
a complete invariant called the persistence diagram~\cite{edelsbrunner2010computational} which also has an alternative representation called barcodes~\cite{Carlsson2009}.
As a generalization of the traditional persistence algorithm, it is expected that the result of our algorithm should also lead to similar invariants. We propose two interpretations of our result as two different invariants, \emph{persistent graded Betti numbers} as a generalization of persistence diagrams and \emph{blockcodes} as a generalization of barcodes.

Both of them depend on the ideas of free resolution and graded Betti numbers which are well studied in commutative algebra and are first introduced in TDA by
Knudson~\cite{knudson2007refinement}. A brief introduction to free resolutions and their construction are given in Appendix~\ref{sec:free_resoln_graded_betti}. Here, we focus more on the two invariants mentioned above. In a nutshell, a free resolution is an extension of free presentation. Consider a free presentation of $M$ as depicted below.

\begin{center}
\begin{tikzcd}
                                                                    & Y^1 \arrow[rd, "\ker f^0" description, hook] &                                                       &  &   \\
F^1 \arrow[rr, "f^1"] \arrow[ru, "\Img f^1" description, two heads] &                                              & F^0 \arrow[rr, "f^0=\cok f^1", two heads] &  & M
\end{tikzcd}
\end{center}

If the presentation map  $f^1$ has nontrivial kernel, we can find a nontrivial map $f^2: F^2\rightarrow F^1$ with $\Img f^2=\ker f^1$, which implies $\cok f^2\cong\Img f^1=\ker f^0=Y^1$. Therefore, $f^2$ is in fact a presentation map of the first syzygy module $Y^1$ of $M$. We can keep doing this to get $f^3, f^4, \dots$ by constructing presentation maps on higher order syzygy modules $Y^2, Y^3, \dots$ of $M$, which results in a diagram depicted below, which is called a free resolution of $M$.

\begin{tikzcd}
                 &                                                                     & Y^3 \arrow[rd, "\ker f^2" description] &                                                                     & Y^2 \arrow[rd, "\ker f^1" description, hook] &                                                                     & Y^1 \arrow[rd, "\ker f^0" description, hook] &                                           &  &   \\
\cdots \arrow[r] & F^3 \arrow[rr, "f^3"] \arrow[ru, "\Img f^3" description, two heads] &                                        & F^2 \arrow[rr, "f^2"] \arrow[ru, "\Img f^2" description, two heads] &                                              & F^1 \arrow[rr, "f^1"] \arrow[ru, "\Img f^1" description, two heads] &                                              & F^0 \arrow[rr, "f^0=\cok f^1", two heads] &  & M
\end{tikzcd}

Free resolution is not unique. However, there exists an essentially unique minimal free resolution in the sense that any free resolution can be obtained by summing the minimal free resolution with a free resolution of a trivial module. 
For a graded module $M$, consider the multiset consisting of the grades of homogeneous basis elements for each $F^j$ in the minimal free resolution of $M$. We record the multiplicity of each grade $\uu\in\Int^d$ in this multiset, denoted as $\beta_{j, \uu}^M$.
Then, the mapping $\beta^M_{(-,-)}:\Int_{\geq 0}\times\Int^d \rightarrow \Int_{\geq 0}$ can be viewed as an invariant of graded module $M$, which is called the graded Betti numbers of $M$.
By applying the decomposition of module $M\simeq \bigoplus M^i$, we have for each indecomposable $M^i$, the refined graded Betti numbers $ {\beta}^{M^i}=\{\beta^{M^i}_{j,\mathbf{u}}\mid j\in \mathbb{N}, \mathbf{u}\in \Int^d\}$. We call the set $\mathcal{PB}(M):=\{\beta^{M^i}\}$ {\em persistent graded Betti numbers} of $M$. For the working example~\ref{ex:working_example}, the persistent graded Betti numbers are given in two tables listed in Table~\ref{tb:persistence_grades}.



One way to summarize the information of graded Betti numbers is to use the Hilbert function, which is also called dimension function~\cite{DX18} in TDA defined as:
$$
\dm M:\Int^d\rightarrow \Int_{\geq 0}\quad \dm M(\mathbf{u}) = \dim(M_{\mathbf{u}})
$$

\begin{fact}\label{fact:equation_dm_bettis}
There is a relation between the graded Betti numbers and dimension function of a persistence module as follows:
$$
\forall \mathbf{u}\in \Int^d, \; \dm M(\mathbf{u})=\sum_{\mathbf{v}\leq \mathbf{u}}\sum_j (-1)^j \beta_{j,\mathbf{v}}
$$
\end{fact}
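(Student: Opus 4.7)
The plan is to derive the identity by taking the Euler characteristic of the minimal free resolution of $M$ grade-by-grade. Let
$$\cdots \to F^2 \xrightarrow{f^2} F^1 \xrightarrow{f^1} F^0 \to M \to 0$$
be the minimal free resolution of $M$. By the definition of graded Betti numbers, each $F^j$ decomposes (up to isomorphism) as $F^j \simeq \bigoplus_{\mathbf{v}\in \Int^d} R_{\rightarrow \mathbf{v}}^{\beta_{j,\mathbf{v}}}$. Under the standing finiteness assumption on $M$, together with Hilbert's syzygy theorem for the polynomial ring $R=\field{k}[t_1,\ldots,t_d]$, this resolution has finite length (at most $d$) and each $\beta_{j,\mathbf{v}}$ is a finite nonnegative integer, so all sums below are finite.

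Next, I would restrict the resolution to a fixed grade $\mathbf{u}\in\Int^d$. As noted in Section 2, kernels, images and cokernels of morphisms of graded modules are computed grade-wise, so exactness of the resolution is inherited component-wise. This yields an exact sequence of finite-dimensional $\field{k}$-vector spaces
$$\cdots \to F^2_{\mathbf{u}} \to F^1_{\mathbf{u}} \to F^0_{\mathbf{u}} \to M_{\mathbf{u}} \to 0.$$
The classical Euler characteristic identity for a bounded exact sequence of finite-dimensional vector spaces then gives
$$\dim_{\field{k}} M_{\mathbf{u}} \;=\; \sum_{j\geq 0}(-1)^j \dim_{\field{k}} F^j_{\mathbf{u}}.$$

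To finish, I would compute $\dim_{\field{k}} F^j_{\mathbf{u}}$ from the direct-sum decomposition. By definition of the shift, $(R_{\rightarrow \mathbf{v}})_{\mathbf{u}} = R_{\mathbf{u}-\mathbf{v}}$, which is one-dimensional when $\mathbf{v}\leq \mathbf{u}$ and zero otherwise. Therefore
$$\dim_{\field{k}} F^j_{\mathbf{u}} \;=\; \sum_{\mathbf{v}\leq \mathbf{u}} \beta_{j,\mathbf{v}}.$$
Substituting this into the Euler identity and exchanging the two finite summations gives
$$\dm M(\mathbf{u}) \;=\; \sum_{j}(-1)^j \sum_{\mathbf{v}\leq \mathbf{u}} \beta_{j,\mathbf{v}} \;=\; \sum_{\mathbf{v}\leq \mathbf{u}}\sum_{j}(-1)^j \beta_{j,\mathbf{v}},$$
which is the claimed identity.

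The substantive content is essentially packaged into two standard ingredients: grade-wise exactness of a graded free resolution, and finiteness of the resolution (so that the alternating sum makes sense and may be rearranged). Neither step should present a genuine obstacle, but if one wished to avoid invoking Hilbert's syzygy theorem, an alternative would be to truncate at any length at which the resolution stabilizes and argue that the identity is independent of the truncation because $\Img f^{j+1} = \ker f^j$ contributes cancelling dimensions to consecutive terms.
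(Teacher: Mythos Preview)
Your argument is correct and is precisely the standard derivation: restrict the minimal free resolution to grade $\mathbf{u}$, use exactness grade-wise to obtain a bounded exact sequence of finite-dimensional vector spaces, take the alternating sum of dimensions, and evaluate $\dim_{\field{k}}(R_{\rightarrow\mathbf{v}})_{\mathbf{u}}$ as the indicator of $\mathbf{v}\leq\mathbf{u}$.

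Note that the paper itself does not supply a proof of this statement; it is recorded as a \emph{Fact} and left unproved, presumably because it is a well-known identity relating the Hilbert function to graded Betti numbers in commutative algebra. So there is no ``paper's own proof'' to compare against; your write-up is simply a valid justification of a fact the authors chose to cite without argument.
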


Then for each indecomposable $M^i$, we have the dimension function $\dm M^i$. We call the set of dimension functions $\mathcal{B}_{\dm}(M):=\{\dm M^i\}$ the {\em blockcode} of $M$.

For our working example, the dimension functions of indecomposable summands $M^1$ and $M^2$ are:
\begin{equation}
    \dm M^1(\mathbf{u})=
    \begin{cases}
        1 & \text{if } \mathbf{u}\geq (1,0) \text{ or } \mathbf{u}\geq (0,1) \\
        0 & \text{otherwise}
    \end{cases}\qquad
    \dm M^2(\mathbf{u})=
    \begin{cases}
        1 & \text{if } \mathbf{u}=(1,1 )\\
        0 & \text{otherwise}
    \end{cases}
\end{equation}
They can be visualized
as in Figure~\ref{Fig:dm_1_2}.

The information which can be read out from graded Betti numbers and dimension functions are similar. We take the dimension functions of our working example as an example.
For $\dm M^1$, two connected components are born at the two left-bottom corners of the purple region. They are merged together immediately when they meet at grade $(1,1)$. After that, they persist forever as one connected component.
For $\dm M^2$, one connected component born at the left-bottom corner of the square green region. Later at the grades of left-top corner and right-bottom corner of the green region, it is merged with some other connected component with smaller grades of birth. Therefore, it only persists within this green region.



\begin{remark*}
    In general, both persistent graded Betti numbers and blockcodes are not sufficient to classify multiparameter persistence modules, which means they are not complete invariants. As indicated in~\cite{carlsson2009topology}, there is no complete discrete invariant for multiparameter persistence modules. However, interestingly, these two invariants are indeed complete invariants for interval decomposable modules like this example,
    which are recently studied in~\cite{botnan2016algebraic,bjerkevik2016stability,DX18}.
\end{remark*}

\begin{table}[]
    \centering
    \begin{tabular}{l||l|l|l|l|l|l|l}
               \hline\hline $\beta^{M^1}$  & (1,0) & (0,1) & (1,1) & (2,1) & (1,2) & (2,2) & $\cdots$ \\[0.5ex]  \hline\hline
    $\beta_{0}$ & 1     & 1     &      &       &   &    &    \\\hline
    $\beta_{1}$ &       &       & 1     &      &   &    & \\\hline
    $\beta_{\geq 2}$ &       &       &       &       &  & &  \\[0.5ex] \hline\hline
                $\beta^{M^2}$  & (1,0) & (0,1) & (1,1) & (2,1) & (1,2) & (2,2) & $\cdots$ \\ \hline\hline
    $\beta_{0}$ &      &      & 1     &       &   &  &  \\\hline
    $\beta_{1}$ &       &       &      &     1 & 1  & &  \\\hline
    $\beta_{2}$ &       &       &       &       &  & 1 & \\\hline
    $\beta_{\geq 3}$ &       &       &       &       &  &  &
    \end{tabular}
    \caption{Persistence grades $\mathcal{PB}(M)=\{ {\beta}^{M^1},  {\beta}^{M^2}$\}. All nonzero entries are listed in this table. Blank boxes indicate 0 entries.}
    \label{tb:persistence_grades}
\end{table}

\begin{figure}[!htb]
    \centering
     \includegraphics[width=.7\linewidth]{./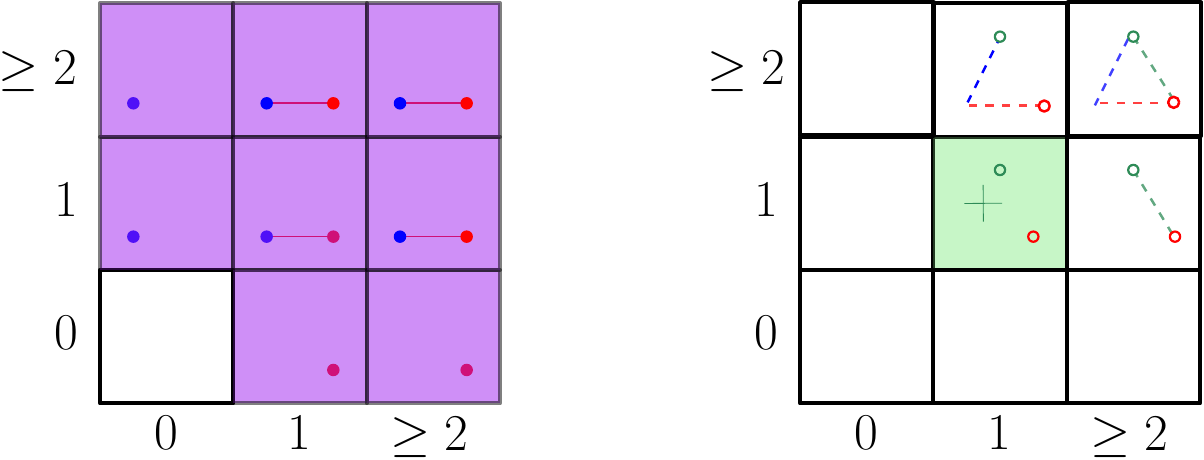}
     \caption{$\dm M^1$ and $\dm M^2$. Each colored square represents an 1-dimensional vector space $\field{k}$ and each white square represents a 0-dimensional vector space. In the left picture, $M^1$ is generated by $v_b^{0,1 }, v_r^{1,0}$ which are drawn as a blue dot and a red dot respectively. They are merged at $(1,1)$ by the red edge $e_r$. In the right picture, $M^2$ is generated by $v_g^{(1,1)}+\mathbf{t}^{(0,1)}v_r^{1,0}$ which is represented by the combination of the green circle and the red circle together at $(1,1)$. After this point $(1,1)$, the generator is mod out to be zero by relation of $e_g$ starting at $(2,1)$, represented by the green dashed line segment, and by relation of $e_b+\mathbf{t}^{(0,1)}e_r$ starting at $(1,2)$, represented by the blue dashed line segment connected with the red dashed line segment.
     }\label{Fig:dm_1_2}
\end{figure}

\subsection{Analogy with 1-parameter persistence modules}
In this section, we draw an analogy between the well known invariants, persistent diagrams and barcodes, in 1-parameter persistence modules and the invariants which we called the persistent graded Betti numbers and blockcodes respectively.

We first give an illustration of the decomposition of an $1$-parameter persistence module with a simple example.

Consider the $0^{th}$ persistence module induced by the 1-parameter simplicial filtration shown in Figure~\ref{fig:barcode}.
The $0^{th}$ homology group encodes the connected components.
From the simplicial filtration, first we can see that the number of connected components from grades 1 to 5 are $(1,2,2,1,1)$. This corresponds to the dimensions of homology vector space at each grade, which is also called the dimension function of the persistence module.
Three vertices in blue, red, and green constitute three generators denoted as $g_1,g_2,g_3$ for homology groups introduced at grades 1, 2, and 3 respectively.
In the filtration, $g_2$ is merged with $g_1$ at grade 3, and $g_3$ is merged with $g_2$ (hence also $g_1$) at grade 4.


The persistence algorithm computes the decomposition of this persistence module which results in a decomposition consisting of three indecomposable components. Each one of them corresponds to one generator. The persistence diagram summarizes the result as three pairings of grades: $ (2,3), (3,4), \mbox{ and } (1, \infty)$.
The explanations are:
\begin{enumerate}
    \item[(2,3):] The generator $g_2$ born at grade 2 is merged with a generator born earlier than it at grade 3.\\
    \item[(3,4):] The generator $g_3$ born at grade 3 is merged with a generator born earlier than it at grade 4.\\
    \item[(1, $\infty$):] The generator $g_1$ born at grade 1 is never merged with some other generator.
\end{enumerate}
The barcode represents the graph of dimension functions of each component in the decomposition. From the barcode of the example illustrated in figure~\ref{fig:barcode}, we can track directly when each generator gets born, merges (dies), and persists during its lifetime.

\begin{figure}[htb!]
    \centering
    \includegraphics[width=0.8\textwidth]{./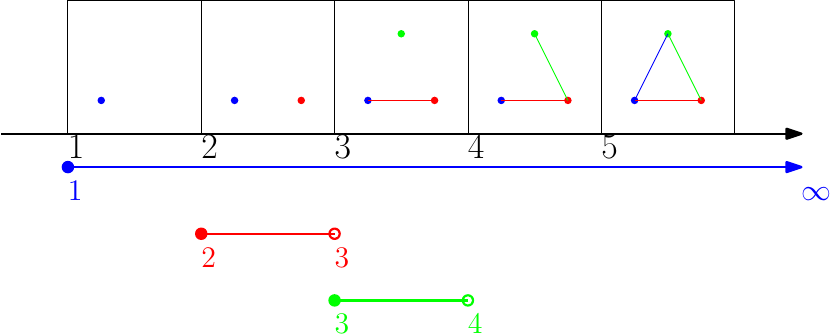}
    \caption{An example of 1-parameter simplicial filtration and its barcode.}
    \label{fig:barcode}
\end{figure}

For multiparameter persistence, we aim to compute a summary which encodes similar information as in the 1-parameter case.
Consider the simplicial bi-filtration for the working example~\ref{ex:working_example}. Similar to our example in the $1$-parameter case, we have three generators $g_{0,1}, g_{1,0}, g_{1,1}$ which are born at grades $(0,1), (1,0),(1,1)$ respectively.

As shown in~\ref{sec:algorithm_run}, the decomposition consists of two indecomposable component. One corresponds to $g_{0,1}, g_{1,0}$, and the other corresponds to $g_{1,1}$. Roughly speaking, the reason we cannot decompose $g_{0,1}$ and $g_{1,0}$ further is that their birth time are incomparable based on standard partial order of grades in $\Int^2$. When they merge together at grade $(1,1)$, neither one of them could be claimed to be merged with the other one having an earlier grade. So we have to keep them together in the same indecomposable component. However, for $g_{1,1}$, when it is merged with $g_{0,1}\mbox{ and } g_{1,0}$ at grades $(2,1)$ and $(1,2)$ respectively, both $g_{0,1}\mbox{ and } g_{1,0}$ have earlier grades than $g_{1,1}$.

Note that this explanation of decomposability based on the comparability of grades of generators does not always work as in 1-parameter case. That is why the decomposition in multiparameter case is much more complicated. But it is interesting to ask when this rule works in multiparameter case.


If we check the blockcode illustrated in Figure~\ref{Fig:dm_1_2}, we can see that for the first component, two generators are born at the two left-bottom corner of the purple region, which are grades $(0,1), (1,0)$. They are merged immediately at grade $(1,1)$. After that, none of them is merged with anything else. Therefore, the merged generator persists forever. For the second component, one generator gets born at the left-bottom corner of the green region, which is grade $(1,1)$. It persists in the green region. It is stopped by something else at grades $(1,2)$ and $(2,1)$. Therefore, it cannot persist beyond the green region.

\section{Concluding remarks}
In this paper, we propose an algorithm that generalizes the traditional persistence algorithm to the general case of multiparameter persistence. Even if its utility was clear, its design was illusive.
The results of this algorithm are interpreted as invariants we call persistent graded Betti numbers and blockcode, which can be viewed as generalizations of the persistent diagram and the barcode computed with the traditional persistence algorithm. Specifically, our algorithm can be applied to determine whether a persistence module is interval decomposable or block decomposable, which plays important roles in the computation of bottleneck distances and interleaving distances in multiparameter cases~\cite{Emerson_18_interval_decompose,botnan2016algebraic,bjerkevik2016stability,DX18}.

The assumption that no two columns nor rows have same grades is necessary for our current algorithm. 
If we consider the persistence modules induced from filtration functions in the space of all real valued functions, our assumption of distinct grading is a generic property meaning that almost all induced persistence modules satisfy the assumption. However, it is still possible that in practice the induced persistence module does not satisfy the assumption.
Without this assumption, our algorithm computes a (not necessarily total) decomposition which represents a total decomposition of some persistence module $M'$ which can be viewed as a perturbed version of the original persistence module $M$ by an arbitrarily small amount (considering $\Int^d\subseteq \Real^d$). That means, the interleaving distance between this $M'$ and $M$ is arbitrarily small. The computed decomposition $M'=\oplus M_j'$ depends on the order with which we break the ties.
How useful is this proposed strategy in practice? This question  essentially relates to the question of "stability" of decomposition structures for
which there is no satisfactory answer yet in the literature.
Currently there is no universal definition about the stability of the decomposition structure of persistence modules. A simple way to address it is to find a matching with  minimal bottleneck distance between two decomposition structures of two persistence modules with some cost function chosen for each paired indecomposable components. The most common cost function used so far is the interleaving distance. The stability-like property under this setting is an active area of recent research. 
There are some results of stability-like property on some special cases of multiparameter persistence modules, such as rectangle decomposable, triangle decomposable, block decomposable module and some other special interval decomposable modules~\cite{botnan2016algebraic}. For general multiparameter persistence modules, we know that the bottleneck distance, defined as we described above, can be much larger than interleaving distance~\cite{bjerkevik2016stability}. 
One possible solution is that, based on the stability-like property of rectangle decomposable modules and special interval decomposable modules, one can approximate the original persistence modules with rectangle or interval decomposable modules with a similar decomposition structure which may provide some stability like property~
\cite{DeyXin2021RecApprox_arXiv}.
 


We believe the two invariants that we discussed are interesting summaries containing rich information about the multiparameter persistence modules. It motivates some interesting questions for future work. What kind of new meaningful pseudo-metrics on the space of persistence modules can be constructed and computed based on these invariants, and what are the relations between the new pseudo-metrics and the existing pseudo-metrics like interleaving distance, bottleneck distance, multi-matching distance, and so on? How stable will these pseudo-metrics be?

The time complexity of our algorithm is more than $O(n^{4})$ in the $2$-parameter case. An interesting question is if one can  apply approximation techniques such as those in~\cite{dey2016simba,sheehy2013linear} to design an approximation algorithm with time complexity $o(n^4)$. We also believe that most of the techniques for speeding up computation in the traditional persistence algorithm, like those in~\cite{bauer2014distributed,bauer2017phat}, can be applied to our algorithm.

\section*{Acknowledgments.} This research is supported partially by the NSF grants CCF-1740761, CCF-2049010 and DMS-1547357. We acknowledge the influence of the
BIRS Oaxaca workshop on Multiparameter Persistence which partially seeded this work. 

\bibliographystyle{abbrv}
\bibliography{ref}




\begin{appendices}

\section{Free resolution and graded Betti numbers}\label{sec:free_resoln_graded_betti}

Here we introduce free resolutions and graded Betti numbers of graded modules. Based on these tools, we give a proof of our Theorem~\ref{thm:decomposition_correspondence}.

\begin{definition}
    For a graded module $M$, a free resolution $\mathcal{F}\rightarrow M$ is an exact sequence:

    \begin{tikzcd}
    \cdots \arrow[r] & F^2 \arrow[r, "f^2"] & F^1 \arrow[r, "f^1"] & F^0 \arrow[r, "f^0", two heads] & M \arrow[r] & 0
    \end{tikzcd}
    where each $F^i$ is a free graded $R$-module.
\end{definition}

We say two free resolutions $\mathcal{F}, \mathcal{G}$ of $M$ are isomorphic, denoted as $\mathcal{F}\simeq \mathcal{G}$, if there exists a collection of isomorphisms $\{h^i:F^i\rightarrow G^i\}_{i=0,1,\dots}$ which commutes with $f^i$'s and $g^i$'s. That is,for all $i=0,1,\dots$, $g^i \circ h^i = h^{i-1} \circ f^{i}$ where $h^{-1}$ is the identity map on $M$. See the following commutative diagram as an illustration. 

\begin{tikzcd}
    \cdots \arrow[r] & F^1 \arrow[r, "f^1"] \arrow[d, "\overset{h^1}{\simeq}"] & F^0 \arrow[r, "f^0"] \arrow[d, "\overset{h^0}{\simeq}"] & M \arrow[r] \arrow[d, "1"] & 0 \\
    \cdots \arrow[r] & G^1 \arrow[r, "g^1"]                                    & G^0 \arrow[r, "g^0"]                                    & M \arrow[r]                & 0
\end{tikzcd}

For two free resolutions $\mathcal{F}\rightarrow M$ and $\mathcal{G}\rightarrow N$, by taking direct sums of free modules $F^i\oplus G^i$ and morphisms $f^i\oplus g^i$, we get a free resolution of $M\oplus N$, denoted as $\mathcal{F}\oplus\mathcal{G}$.

Note that a presentation of $M$ can be viewed as the tail part
\begin{tikzcd}
F^1 \arrow[r, "f^1"] & F^0 \arrow[r, "f^0", two heads] & M \arrow[r] & 0
\end{tikzcd}
of a free resolution $\mathcal{F}\rightarrow M$. Free resolutions and presentations are not unique. But there exists a unique minimal free resolution in the following sense:

\begin{fact}
For a graded module $M$, there exists a unique free resolution such that $\forall i \geq 0, \Img f_{i+1}\subseteq \mathfrak{m}F_{i}$, where $\mathfrak{m}=(x_1,\cdots, x_d)$ is the unique maximal ideal of the graded ring $R=\field{k}[x_1,\cdots, x_d]$.
\end{fact}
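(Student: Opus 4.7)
The plan is to prove existence by an inductive construction that picks minimal generating sets at every stage, and to prove uniqueness by combining the comparison (lifting) theorem for free resolutions with a graded version of Nakayama's lemma.

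For existence, I would use the fact (graded Nakayama) that a finitely generated graded module $N$ over $R=\field{k}[x_1,\dots,x_d]$ admits a minimal homogeneous generating set obtained by lifting any $\field{k}$-basis of $N/\mathfrak{m}N$. Apply this to $M$ to get grades $\{\uu_j^0\}$, set $F^0=\bigoplus_j R_{\to \uu_j^0}$, and let $f^0:F^0\twoheadrightarrow M$ send the standard basis to the chosen generators. Let $K_0=\ker f^0$, which is still finitely generated and graded; iterate on $K_0$ to build $F^1$ and $f^1:F^1\to F^0$ with $\Img f^1=K_0$. Continuing inductively defines the whole resolution. The minimality condition $\Img f^{i+1}\subseteq \mathfrak{m}F^i$ will drop out by tensoring the short exact sequence $0\to K_i\to F^i\to K_{i-1}\to 0$ with $\field{k}=R/\mathfrak{m}$: the right-exact sequence $K_i/\mathfrak{m}K_i\to F^i/\mathfrak{m}F^i\to K_{i-1}/\mathfrak{m}K_{i-1}\to 0$ has the right-hand map sending a basis to a basis (by construction), hence it is an isomorphism, so the image of $K_i=\Img f^{i+1}$ in $F^i/\mathfrak{m}F^i$ is zero.

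For uniqueness, let $\mathcal{F}$ and $\mathcal{G}$ be two resolutions satisfying the minimality condition. The comparison theorem for free resolutions provides chain maps $h:\mathcal{F}\to\mathcal{G}$ and $k:\mathcal{G}\to\mathcal{F}$ that lift $\mathrm{id}_M$, and both $kh$ and $hk$ are chain-homotopic to the identity. Reducing modulo $\mathfrak{m}$, minimality forces every differential in $\mathcal{F}\otimes_R \field{k}$ and $\mathcal{G}\otimes_R \field{k}$ to vanish, so these reduced complexes compute $\Tor_*^R(M,\field{k})$ without further cancellation and $h\otimes\field{k}$, $k\otimes\field{k}$ are mutually inverse in each degree. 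Thus $h^i\otimes\field{k}$ is a $\field{k}$-linear isomorphism $F^i/\mathfrak{m}F^i\to G^i/\mathfrak{m}G^i$; by graded Nakayama this forces $h^i$ to be surjective, and symmetrically for $k^i$. Since $h^i$ and $k^i$ are grade-preserving surjections between free modules of matching rank and graded Betti data, they must be isomorphisms, giving $\mathcal{F}\simeq \mathcal{G}$.

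The main obstacle I expect is the uniqueness half, specifically pinning down rigorously that minimality makes $\mathcal{F}\otimes_R \field{k}$ have zero differentials so that $\Tor$ is read off directly in each degree, and then using graded Nakayama correctly to promote an isomorphism mod $\mathfrak{m}$ to an isomorphism of free graded modules. The existence half is essentially bookkeeping once graded Nakayama and the standard lifting of $\field{k}$-bases are in hand.
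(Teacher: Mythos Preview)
Your proposal is correct and follows the standard textbook argument. However, the paper does not actually prove this statement: it is stated as a \emph{Fact} and immediately followed by the remark ``Here we briefly state the construction of the unique free resolution without formal proof. More details can be found in~[Bruns--Herzog; R\"omer].'' The paper then gives only the existence construction (Construction~\ref{construction:appendix:free_resoln}), which is exactly your inductive procedure of choosing minimal homogeneous generating sets at each stage; it says nothing about uniqueness beyond citing the references.

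So your existence argument coincides with the paper's sketch, while your uniqueness argument (comparison theorem plus graded Nakayama to show that a lift of $\mathrm{id}_M$ reduces mod $\mathfrak{m}$ to an isomorphism and hence is itself an isomorphism) supplies precisely what the paper defers to the literature. One small point worth tightening: after establishing that $h^i\otimes\field{k}$ is an isomorphism and invoking Nakayama to get surjectivity of $h^i$, the cleanest way to conclude that $h^i$ is an isomorphism is to note that a surjective endomorphism-like map between finitely generated free graded modules of the same graded rank has free kernel which, after tensoring with $\field{k}$, must vanish---rather than appealing separately to $k^i$. But either route works, and your outline is sound.
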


\begin{definition}
In a minimal free resolution $\mathcal{F}\rightarrow M$,
the tail part
\begin{tikzcd}[row sep=small, column sep = small]
F^1 \arrow[r, "f^1"] & F^0 \arrow[r, "f^0", two heads] & M \arrow[r] & 0
\end{tikzcd}
is called the \emph{minimal presentation} of $M$ and $f^1$ is called the \emph{minimal presentation map} of $M$.
\end{definition}

Here we briefly state the construction of the unique free resolution without formal proof. More details can be found in~\cite{CM_rings_bruns1998,on_minimal_graded_free_resoln_romer2001dissertation}:

\begin{construction}\label{construction:appendix:free_resoln}
Choose a minimal set of homogeneous generators $g_1, \cdots, g_n$ of $M$. Let $F^0=\bigoplus_{i=1}^{n} R_{\rightarrow \gr(g_i)}$ with standard basis $e_1^{\gr(g_1)}, \cdots, e_n^{\gr(g_n)}$ of $F^0$. The homogeneous $R$-map $f^0: F^0 \rightarrow M$ is determined by $f^0(e_i)=g_i$. Now the 1st syzygy module of $M$,
\begin{tikzcd}
S_1 \arrow[r, "\ker f^0", hook] & F^0
\end{tikzcd},
is again a finitely generated graded $R$-module. We choose a minimal set of homogeneous generators $s_1, \cdots, s_m$ of $S_1$ and let $F^1=\bigoplus_{j=1}^{m} R_{\rightarrow \gr(s_j)}$ with standard basis $e_1'^{\gr(s_1)}, \cdots, e_m'^{\gr(s_m)}$ of $F^1$. The homogeneous $R$-map $f^1: F^1 \rightarrow F^0$ is determined by $f^1(e_j')=s_j$.
By repeating this procedure for $S_2=\ker f^1$ and moving backward further, one gets a graded free resolution of $M$.
\end{construction}

\begin{fact}\label{fact:minimal_free_resolution}
Any free resolution of $M$ can be obtained (up to isomorphism) from the minimal free resolution by summing it with free resolutions of trivial modules, each with the following form

\begin{tikzcd}
\cdots \arrow[r] & 0 \arrow[r] & F^{i+1} \arrow[r, "f^{i+1}=\Id"] & F^i \arrow[r] & 0 \arrow[r] & \cdots \arrow[r, two heads] & N=0 \arrow[r] & 0
\end{tikzcd}

Note that the only nontrivial morphism 
\begin{tikzcd}
F^{i+1} \arrow[r, "f^{i+1}=\Id"] & F^i
\end{tikzcd}
is the identity map $\Id$.

\end{fact}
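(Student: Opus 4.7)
The plan is to compare an arbitrary free resolution $\mathcal{G} \to M$ with the minimal free resolution $\mathcal{F} \to M$ and show that $\mathcal{G} \simeq \mathcal{F} \oplus \mathcal{T}$, where $\mathcal{T}$ decomposes further as a direct sum of two-term identity complexes of the stated form.

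First I would invoke the standard comparison theorem for free (hence projective) resolutions: since both $\mathcal{F}$ and $\mathcal{G}$ resolve $M$, one can inductively lift $\mathrm{id}_M$ to chain maps $\varphi : \mathcal{F}\to\mathcal{G}$ and $\psi : \mathcal{G}\to\mathcal{F}$ over the identity on $M$, and any two such lifts are chain-homotopic. In particular, $\psi\circ\varphi$ is chain-homotopic to $\mathrm{id}_{\mathcal{F}}$.

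Second, I would prove the key lemma: every chain endomorphism $\eta:\mathcal{F}\to\mathcal{F}$ that lifts $\mathrm{id}_M$ is an isomorphism of chain complexes. For each $i$, working in the homogeneous basis of $F^i$ from Construction~\ref{construction:appendix:free_resoln}, I would reduce modulo the maximal graded ideal $\mathfrak{m}=(t_1,\ldots,t_d)$. The minimality condition $\Img f^{i+1} \subseteq \mathfrak{m}F^i$ makes all differentials in $\mathcal{F}\otimes_R (R/\mathfrak{m})$ vanish, so $H_i(\mathcal{F}\otimes R/\mathfrak{m}) = F^i/\mathfrak{m}F^i$, and because $\eta$ lifts the identity on $M$, its reduction $\eta^i\bmod\mathfrak{m}$ is the identity on this vector space. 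Graded Nakayama then implies $\eta^i$ is surjective, and since $F^i$ is finitely generated free, surjectivity gives bijectivity. Applying this to $\eta:=\psi\circ\varphi$ shows $\varphi$ is a split monomorphism of chain complexes, so $\mathcal{G}\simeq\varphi(\mathcal{F})\oplus\mathcal{T}$ with $\mathcal{T}:=\ker\psi$, and $\mathcal{T}$ is an exact complex of finitely generated graded free modules (a free resolution of the zero module).

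Third, I would decompose $\mathcal{T}$ into trivial pieces. Working from the right, exactness of $\mathcal{T}$ at $T^0$ gives a surjection $T^1\twoheadrightarrow T^0$; since $T^0$ is free this splits, so $T^1\cong T^0\oplus K^1$ with $K^1=\ker(T^1\to T^0)$, and the summand $T^0\xrightarrow{\mathrm{id}}T^0$ peels off as a trivial two-term complex of exactly the form stated. The remaining tail $\cdots\to T^2\to K^1\to 0$ is again an exact complex of finitely generated graded free modules (the kernel $K^1$ is a direct summand of the graded free $T^1$, hence projective, hence free by the graded Nakayama/Quillen--Suslin-type argument for $R=\field{k}[t_1,\ldots,t_d]$), so we recurse.

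The main obstacle will be the Nakayama step: carefully justifying that minimality forces $\eta^i\bmod\mathfrak{m}$ to be the identity on $F^i/\mathfrak{m}F^i$ (rather than some other automorphism), and that graded Nakayama upgrades a mod-$\mathfrak{m}$ identity to a genuine isomorphism in the $\mathbb{Z}^d$-graded setting. A secondary, but routine, point is verifying that each kernel $K^i$ that appears in the peeling-off procedure remains graded free so that the recursion produces only trivial complexes of the stated shape.
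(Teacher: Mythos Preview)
The paper does not prove this statement at all: it is recorded as a \emph{Fact} with a pointer to the commutative-algebra literature (\cite{CM_rings_bruns1998, on_minimal_graded_free_resoln_romer2001dissertation}) and is used as a black box. Your proposal supplies the standard textbook argument---comparison theorem, minimality plus graded Nakayama to force $\psi\varphi$ to be an isomorphism, then splitting off and inductively peeling trivial two-term complexes from the acyclic complement---and it is correct.

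One small sharpening for the step you flagged as the main obstacle: the reason $\eta^i\bmod\mathfrak{m}$ equals the identity is not merely that $\eta$ lifts $\mathrm{id}_M$, but that $\eta$ is chain-homotopic to $\mathrm{id}_{\mathcal{F}}$; writing $\eta^i-\mathrm{id}=s^{i-1}f^i+f^{i+1}s^i$, minimality gives $\Img f^i\subseteq\mathfrak{m}F^{i-1}$ and $\Img f^{i+1}\subseteq\mathfrak{m}F^i$, and $R$-linearity of $s^{i-1}$ then forces both summands into $\mathfrak{m}F^i$. You have all the pieces for this in your outline, just make the dependence on the homotopy explicit when you write it out.
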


From the above constructions, it is not hard to see that this unique free resolution is a minimal one in the sense that each free module $F^j$ has smallest possible size of basis.

For this unique free resolution, for each $j$, we can write $F^j\simeq \bigoplus_{\mathbf{u}\in \Int^d} \bigoplus^{\beta^{M}_{j,\mathbf{u}}} R_{\rightarrow \mathbf{u}}$ (the notation $\bigoplus^{\beta^{M}_{j,\mathbf{u}}} R_{\rightarrow \mathbf{u}}$ means the direct sum of ${\beta^{M}_{j,\mathbf{u}}}$ copies of $R_{\rightarrow \mathbf{u}}$).
The set $\{\beta^{M}_{j,\mathbf{u}}\mid j\in \mathbb{N}, \mathbf{u}\in \Int^d\}$
is called \emph{the graded Betti numbers} of $M$. When $M$ is clear, we might omit the upper index in Betti number.
For example, the graded Betti number of the persistence module for our working example~\ref{ex:working_example} is listed as Table~\ref{tb:graded_betti_numbers}.

\begin{table}[h!]
\centering
\begin{tabular}{l||l|l|l|l|l|l|l}
            $\beta^M$  & (1,0) & (0,1) & (1,1) & (2,1) & (1,2) & (2,2) & $\cdots$ \\ \hline\hline
$\beta_{0}$ & 1     & 1     & 1     &       &   &  &   \\\hline
$\beta_{1}$ &       &       & 1     & 1     & 1  &  & \\\hline
$\beta_{2}$ &       &       &       &       &  & 1 &  \\\hline
$\beta_{\geq 3}$ &       &       &       &       &  &  &
\end{tabular}
\caption{All the nonzero graded Betti numbers $\beta_{i,\mathbf{u}}$ are listed in the table. Empty items are all zeros.}
\label{tb:graded_betti_numbers}
\end{table}

Note that the graded Betti number of a module is uniquely determined by the unique minimal free resolution. On the other hand, if a free resolution $\mathcal{G}\rightarrow M$ with $G^j\simeq \bigoplus_{\mathbf{u}\in \Int^d} \bigoplus^{\gamma^{M}_{j,\mathbf{u}}} R_{\rightarrow \mathbf{u}}$ satisfies ${\gamma^{M}_{j,\mathbf{u}}} = {\beta^{M}_{j,\mathbf{u}}} $ everywhere, then $\mathcal{G}\simeq \mathcal{F}$
is also a minimal free resolution of $M$. 

\begin{fact}\label{fact:graded_betti_numbers_decomposition}
$\beta^{M\oplus N}_{*,*}=\beta^{M}_{*,*}+ \beta^{N}_{*,*}$
\end{fact}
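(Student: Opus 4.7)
The plan is to prove additivity by exhibiting a particular minimal free resolution of $M \oplus N$ constructed from minimal free resolutions of $M$ and $N$, and then reading off Betti numbers from ranks of free summands at each grade.

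First, I would let $\mathcal{F} \to M$ and $\mathcal{G} \to N$ be the minimal free resolutions whose existence and uniqueness are recorded in the fact preceding the definition of minimal presentation. Taking componentwise direct sums, one obtains a sequence $\mathcal{F} \oplus \mathcal{G}$ with free modules $F^j \oplus G^j$ and differentials $f^j \oplus g^j$. Exactness in each degree follows from exactness of $\mathcal{F}$ and $\mathcal{G}$ separately, since $\ker(f^j \oplus g^j) = \ker f^j \oplus \ker g^j$ and $\Img(f^{j+1} \oplus g^{j+1}) = \Img f^{j+1} \oplus \Img g^{j+1}$; similarly $\cok(f^0 \oplus g^0) \simeq M \oplus N$. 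So $\mathcal{F} \oplus \mathcal{G}$ is a free resolution of $M \oplus N$.

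Next, I would verify that this resolution is minimal by checking the characterization $\Img (f^{j+1} \oplus g^{j+1}) \subseteq \mathfrak{m}(F^j \oplus G^j)$. By minimality of $\mathcal{F}$ and $\mathcal{G}$ we have $\Img f^{j+1} \subseteq \mathfrak{m} F^j$ and $\Img g^{j+1} \subseteq \mathfrak{m} G^j$, and these combine directly to give the needed containment since $\mathfrak{m}(F^j \oplus G^j) = \mathfrak{m} F^j \oplus \mathfrak{m} G^j$. Thus $\mathcal{F} \oplus \mathcal{G}$ is a minimal free resolution of $M \oplus N$.

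Finally, by the uniqueness of the minimal free resolution (up to isomorphism) and the fact that graded Betti numbers are determined by it, I would decompose each summand as $F^j \simeq \bigoplus_{\mathbf{u}} \bigoplus^{\beta^M_{j,\mathbf{u}}} R_{\rightarrow \mathbf{u}}$ and $G^j \simeq \bigoplus_{\mathbf{u}} \bigoplus^{\beta^N_{j,\mathbf{u}}} R_{\rightarrow \mathbf{u}}$, so that $F^j \oplus G^j \simeq \bigoplus_{\mathbf{u}} \bigoplus^{\beta^M_{j,\mathbf{u}} + \beta^N_{j,\mathbf{u}}} R_{\rightarrow \mathbf{u}}$. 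Comparing with the expression $F^j \oplus G^j \simeq \bigoplus_{\mathbf{u}} \bigoplus^{\beta^{M\oplus N}_{j,\mathbf{u}}} R_{\rightarrow \mathbf{u}}$ coming from the Betti numbers of $M \oplus N$, the desired equality $\beta^{M \oplus N}_{j,\mathbf{u}} = \beta^M_{j,\mathbf{u}} + \beta^N_{j,\mathbf{u}}$ follows for every $j$ and $\mathbf{u}$.

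The main obstacle is the minimality check, since additivity of Betti numbers would fail if one only used the direct sum of arbitrary (not necessarily minimal) free resolutions; the delicate point is that $\mathfrak{m}$ distributes over direct sums of graded modules, which keeps the minimality condition stable under direct sum. Everything else is a formal manipulation using exactness and the uniqueness of minimal resolutions.
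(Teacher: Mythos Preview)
Your proof is correct. Note that the paper actually states this as a \emph{Fact} without proof, treating it as a standard result from commutative algebra, so there is no ``paper's own proof'' to compare against. Your argument in fact simultaneously establishes Proposition~\ref{prop:minimal_resoln_decomposition} (that the minimal resolution of $M^1\oplus M^2$ is the direct sum of the minimal resolutions of the summands), which the paper then derives \emph{from} this Fact; you have effectively reversed the logical order, proving the resolution-level statement directly via the minimality criterion $\Img f^{j+1}\subseteq \mathfrak{m}F^j$ and reading off the Betti-number identity as a consequence. Both orderings are valid since the two statements are essentially equivalent.
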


\begin{proposition}\label{prop:minimal_resoln_decomposition}
Given a graded module $M$ with a decomposition $M\simeq M^1\oplus M^2$, let $\mathcal{F}\rightarrow M$ be the minimal resolution of $M$, and $\mathcal{G}\rightarrow M^1$ and $\mathcal{H}\rightarrow M^2$ be the minimal resolution of $M^1$ and $M^2$ respectively, then $\mathcal{F}\simeq \mathcal{G}\oplus \mathcal{H}$.
\end{proposition}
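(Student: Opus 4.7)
The plan is to show that $\mathcal{G}\oplus\mathcal{H}$ is itself a minimal free resolution of $M$ and then invoke the essential uniqueness of minimal free resolutions to conclude $\mathcal{F}\simeq \mathcal{G}\oplus\mathcal{H}$. Concretely, I would proceed in three steps: (i) verify that $\mathcal{G}\oplus\mathcal{H}$ is a free resolution of $M^1\oplus M^2$; (ii) verify that this resolution is minimal in the sense of the characterization just after Construction~\ref{construction:appendix:free_resoln}; (iii) apply uniqueness of the minimal free resolution together with the given isomorphism $M\simeq M^1\oplus M^2$.

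For step (i), each module $G^i\oplus H^i$ is a direct sum of free graded modules, hence free graded. The complex $\mathcal{G}\oplus\mathcal{H}$ with differentials $g^i\oplus h^i$ is exact at every stage because $\ker(g^i\oplus h^i)=\ker g^i \oplus \ker h^i$ and $\Img(g^{i+1}\oplus h^{i+1})=\Img g^{i+1}\oplus \Img h^{i+1}$, and the exactness of $\mathcal{G}$ and $\mathcal{H}$ forces these to coincide componentwise. The augmentation map $g^0\oplus h^0: G^0\oplus H^0\twoheadrightarrow M^1\oplus M^2$ is surjective, and we identify $M^1\oplus M^2$ with $M$ through the given isomorphism.

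For step (ii), recall that $\mathcal{G}$ minimal means $\Img g^{i+1}\subseteq \mathfrak{m}G^i$ for all $i\geq 0$, and similarly for $\mathcal{H}$. Since $\mathfrak{m}(G^i\oplus H^i)=\mathfrak{m}G^i\oplus \mathfrak{m}H^i$, we obtain
\[
\Img(g^{i+1}\oplus h^{i+1})=\Img g^{i+1}\oplus \Img h^{i+1}\subseteq \mathfrak{m}G^i\oplus\mathfrak{m}H^i=\mathfrak{m}(G^i\oplus H^i),
\]
so $\mathcal{G}\oplus\mathcal{H}$ is minimal. Equivalently, one can argue via graded Betti numbers: by Fact~\ref{fact:graded_betti_numbers_decomposition}, $\beta^{M}_{j,\uu}=\beta^{M^1}_{j,\uu}+\beta^{M^2}_{j,\uu}$ for all $j,\uu$, so the free module in each homological degree of $\mathcal{G}\oplus\mathcal{H}$ has precisely the rank prescribed by the graded Betti numbers of $M$.

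For step (iii), both $\mathcal{F}\to M$ and $\mathcal{G}\oplus\mathcal{H}\to M$ are minimal free resolutions of the same graded module $M$. By the uniqueness of the minimal free resolution stated just before Construction~\ref{construction:appendix:free_resoln}, they are isomorphic as free resolutions, yielding $\mathcal{F}\simeq \mathcal{G}\oplus\mathcal{H}$. The only delicate point in the argument is step (ii), namely the verification that minimality is preserved under direct sums; the equality $\mathfrak{m}(G^i\oplus H^i)=\mathfrak{m}G^i\oplus \mathfrak{m}H^i$ disposes of it, and the rest is essentially bookkeeping about exact functors on graded modules.
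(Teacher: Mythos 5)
Your proof is correct, and its skeleton matches the paper's: show that $\mathcal{G}\oplus\mathcal{H}$ is itself a minimal free resolution of $M$ and then invoke essential uniqueness of minimal resolutions. The difference lies in how minimality is verified. The paper argues via graded Betti numbers: it appeals to Fact~\ref{fact:graded_betti_numbers_decomposition} ($\beta^{M^1\oplus M^2}=\beta^{M^1}+\beta^{M^2}$) together with the observation that a free resolution whose ranks agree gradewise with the Betti numbers of $M$ is automatically the minimal one. You instead check the defining condition of minimality directly, $\Img(g^{i+1}\oplus h^{i+1})\subseteq \mathfrak{m}(G^i\oplus H^i)$, using $\mathfrak{m}(G^i\oplus H^i)=\mathfrak{m}G^i\oplus\mathfrak{m}H^i$, and only mention the Betti-number route as an aside. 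Your version is slightly more self-contained: the additivity of graded Betti numbers under direct sums is itself most naturally deduced from the statement being proved (or from exactly the $\mathfrak{m}$-adic computation you perform), so checking the $\mathfrak{m}$-condition directly sidesteps any appearance of circularity, at the cost of nothing. The paper's route is shorter on the page because it delegates the work to a quoted fact. Both arguments also tacitly use that exactness and the augmentation are preserved under direct sums, which you spell out explicitly in step (i); that bookkeeping is fine as written.
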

\begin{proof}
$\mathcal{G}\oplus\mathcal{H}\rightarrow M$ is a free resolution. We need to show it is a minimal free resolution. By previous argument, we just need to show that the graded Betti numbers of $\mathcal{G}\oplus\mathcal{H}\rightarrow M^1\oplus M^2$ coincide with graded Betti numbers of $\mathcal{F}\rightarrow M$. This is true by the fact~\ref{fact:graded_betti_numbers_decomposition}.
\end{proof}

Note that the free resolution is an extension of free presentation.
So the above proposition applies to free presentation, which immediately results in the following Corollary.

\begin{corollary}\label{cor:minimal_present_decomposition}
Given a graded module $M$ with a decomposition $M\simeq M^1\oplus M^2$, let $f$ be its minimal presentation map, and $g$, $h$ be the minimal presentation maps of $M^1, M^2$ respectively, then $f\simeq g\oplus h$.
\end{corollary}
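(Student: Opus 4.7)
The plan is to derive this corollary almost directly from Proposition~\ref{prop:minimal_resoln_decomposition} by extracting the tail (degree $0$ and $1$) of the minimal free resolutions involved. Concretely, I would let $\mathcal{F}\to M$, $\mathcal{G}\to M^1$, and $\mathcal{H}\to M^2$ denote the minimal free resolutions, so that by definition the minimal presentation maps $f$, $g$, $h$ are precisely the maps $f^1\colon F^1\to F^0$, $g^1\colon G^1\to G^0$, and $h^1\colon H^1\to H^0$ appearing in these resolutions.

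Next I would invoke Proposition~\ref{prop:minimal_resoln_decomposition} applied to the given decomposition $M\simeq M^1\oplus M^2$ to conclude $\mathcal{F}\simeq \mathcal{G}\oplus\mathcal{H}$ as free resolutions of $M$. Unpacking this isomorphism of resolutions, there exist isomorphisms $\varphi^i\colon F^i \xrightarrow{\sim} G^i\oplus H^i$ for every $i\geq 0$ that commute with the differentials, i.e., $(g^{i}\oplus h^{i})\circ \varphi^i = \varphi^{i-1}\circ f^i$ for all $i\geq 1$, and $(g^0\oplus h^0)=\mathrm{id}_M\circ (g^0\oplus h^0)\circ\varphi^0$.

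Restricting attention to $i=0,1$, the commutative square
\[
\begin{tikzcd}
F^1 \arrow[r, "f^1"] \arrow[d, "\varphi^1"', "\simeq"] & F^0 \arrow[d, "\varphi^0", "\simeq"'] \\
G^1\oplus H^1 \arrow[r, "g^1\oplus h^1"]                & G^0\oplus H^0
\end{tikzcd}
\]
gives exactly the data required by the definition of isomorphic morphisms between free modules. Since $f=f^1$ and $g\oplus h = g^1\oplus h^1$ by construction, this square certifies $f\simeq g\oplus h$.

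I don't foresee a real obstacle here: the work is entirely done by Proposition~\ref{prop:minimal_resoln_decomposition}, and the only thing one needs to be careful about is that the notion of \emph{isomorphism of presentations} used in the statement (two commuting vertical isomorphisms on source and target) is the same data one obtains by truncating an isomorphism of free resolutions to its last two terms. This is immediate from the definitions recalled in the excerpt, so the corollary follows by a one-line truncation argument.
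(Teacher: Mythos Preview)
Your proposal is correct and takes essentially the same approach as the paper: the paper simply remarks that a free resolution extends a free presentation, so Proposition~\ref{prop:minimal_resoln_decomposition} immediately yields the corollary, and your truncation of the isomorphism $\mathcal{F}\simeq\mathcal{G}\oplus\mathcal{H}$ to degrees $0$ and $1$ is exactly the explicit unpacking of that one-line remark.
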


We also have the following fact relating morphisms:
\begin{fact}
$\ker (f^1\oplus f^2)=\ker f^1\oplus \ker f^2$;
$\cok (f^1\oplus f^2)=\cok f^1\oplus \cok f^2$.
\end{fact}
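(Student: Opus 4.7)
The plan is to prove both identities by a direct element-chase that exploits the grade-wise definitions of kernel and cokernel recalled earlier in the paper. Since graded-module isomorphisms can be checked grade-by-grade, and the paper states $(\ker f)_\mathbf{u} = \ker f_\mathbf{u}$, $(\Img f)_\mathbf{u} = \Img f_\mathbf{u}$, and $(\cok f)_\mathbf{u} = \cok f_\mathbf{u}$, it suffices to prove the statement at every grade $\mathbf{u}$. At each fixed grade the problem reduces to the familiar vector-space assertion that kernels and cokernels commute with finite direct sums, which I will invoke.

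For the kernel part, I would spell out the argument once at a fixed grade $\mathbf{u}$: given $(m_1,m_2)\in (M^1\oplus M^2)_\mathbf{u}=M^1_\mathbf{u}\oplus M^2_\mathbf{u}$, by the definition of the direct sum of morphisms one has $(f^1\oplus f^2)_\mathbf{u}(m_1,m_2)=(f^1_\mathbf{u}(m_1),f^2_\mathbf{u}(m_2))$, and this is zero iff $f^1_\mathbf{u}(m_1)=0$ and $f^2_\mathbf{u}(m_2)=0$, i.e.\ iff $(m_1,m_2)\in \ker f^1_\mathbf{u}\oplus \ker f^2_\mathbf{u}=(\ker f^1\oplus \ker f^2)_\mathbf{u}$. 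Assembling these isomorphisms across all $\mathbf{u}$ yields a morphism of graded modules, and one checks that it intertwines the ring action (each $t_i\bullet$ acts component-wise on $M^1\oplus M^2$, so it preserves the decomposition of the kernel).

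For the cokernel part, I would first establish the analogous image identity $\Img(f^1\oplus f^2)=\Img f^1\oplus \Img f^2$ by the same grade-wise element-chase. Then I invoke the standard isomorphism $(N^1\oplus N^2)/(\Img f^1\oplus \Img f^2)\simeq (N^1/\Img f^1)\oplus (N^2/\Img f^2)=\cok f^1\oplus \cok f^2$, which again is checked one grade at a time using only linear algebra. I would note in passing that these isomorphisms are canonical and preserve the grading, so they are legitimate graded module isomorphisms.

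I do not expect any real obstacle here; the content is essentially categorical (kernels and cokernels commute with finite biproducts in any abelian category, and the category of $\mathbb{Z}^d$-graded $R$-modules is abelian). The only mildly delicate point worth articulating is the need to verify that the candidate isomorphisms are \emph{graded} morphisms, i.e.\ that they commute with each $t_i\bullet$. This is immediate from the component-wise definition of the ring action on $M^1\oplus M^2$ and $N^1\oplus N^2$, so the entire proof is a few lines once the grade-wise reduction has been made explicit.
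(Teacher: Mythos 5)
Your argument is correct: the grade-wise reduction via $(\ker f)_\mathbf{u}=\ker f_\mathbf{u}$ and $(\cok f)_\mathbf{u}=\cok f_\mathbf{u}$, the component-wise action of $f^1\oplus f^2$, and the check that the resulting isomorphisms respect the $t_i\bullet$ action is exactly the routine verification intended here. The paper states this Fact without proof (it is the standard statement that kernels and cokernels commute with finite biproducts in the abelian category of graded modules), so your write-up simply supplies the canonical justification the paper implicitly assumes.
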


Based on the above statements, now we can prove Theorem~\ref{thm:decomposition_correspondence}


\begin{proof}[proof of Theorem~\ref{thm:decomposition_correspondence}]

With the obvious correspondence $[f_i]\leftrightarrow [f]_i$, 
($2\leftrightarrow3$) easily follows from our arguments about matrix diagonalization in the main context.

($1\rightarrow 2$) Given $H\simeq \bigoplus H^i$ with the minimal presentation maps $f$ of $H$: For each $H^i$, there exists a minimal presentation map $f_i$. By Corollary~\ref{cor:minimal_present_decomposition}, we have $f\simeq \bigoplus f_i$.

($2\rightarrow 1$) Given $f\simeq \bigoplus f_i$: Since $H=\cok f= \cok(\bigoplus f_i)=\bigoplus \cok f_i$, let $H^i=\cok f_i$, we have the decomposition $H\simeq \bigoplus H^i$.

It follows that the above two constructions together give the desired 1-1 correspondence.
\end{proof}

    

\begin{proof}[proof of Proposition~\ref{prop:decomposition_correspondence}]
    We start with (2). 
    Consider the total decomposition $f\simeq \bigoplus f^i$. By Remark~\ref{rmk:presentation_and_minimal}, any presentation is isomorphic to a direct sum of the minimal presentation and some trivial presentations. Let $f\simeq g\oplus h$ with $g$ being the minimal presentation. So $\cok h=0$. Let $g\simeq \bigoplus g^j$ and $h\simeq \bigoplus h^k$ be the total decomposition of $g$ and $h$. Note that $\forall k, \cok h^k=0$.
    Now we have $\cok f\simeq \bigoplus \cok f^i$ with $\cok f^i$ being either $\cok g^j$ or $0$, by the essentially uniqueness of total decomposition. 
    With $H\simeq \bigoplus \cok g^j$ being a total decomposition of $H$ by Remark~\ref{rmk:total_decomposition_correspondence}, and $\bigoplus \cok f^i=\bigoplus \cok g^j \bigoplus 0$, we can say that $H\simeq \bigoplus \cok f^i$ is also a total decomposition. 

    
    Now for (1). For any decomposition $H\simeq \bigoplus H^i$, it is not hard to see that each $H^i$ can be written as a direct sum of a subset of $H_*^j$'s with $H\simeq \bigoplus H_*^j$ being the total decomposition of $H$. One just need to combine the  $f^i$'s correspondingly in the total decomposition of $f\simeq \bigoplus f^i$ to get the desired decomposition of $f$.
\end{proof}

\section{Missing proofs in Section~\ref{sec:computing_decomposition}}\label{sec:missing_proofs}

\begin{proposition*}[\ref{prop:block_reduction_linearization}]
    The target block $\A|_{{T}}$ can be reduced to 0 while preserving the prior if and only if $\A|_{{T}}$ can be written as a linear combination of independent operations. That is,
    \begin{equation}
        \A|_{{T}}=
    \sum_{\mathclap{\substack{l\notin\row(T)\\ k\in\row({T}) }}}\alpha_{k,l} \X^{k,l}|_{{T}}+\sum_{\mathclap{\substack{i\notin\col(T)\\ j\in \col({T}) }}} \beta_{i,j}\Y^{i,j}|_{{T}} 
    \label{eq:ops_appendix}
    \end{equation}
    where 
    $\alpha_{k,l}$'s and $\beta_{i,j}$'s are coefficient in $\mathbb{k}=\mathbb{F}_2$. 
\end{proposition*}

\begin{proof}

Everything in the statement of the proposition is restricted to ${T}$.
For simplicity of notations, we omit the lower script ${\leq t}$ by assuming $\A_{\leq t}=\A$, i.e., $t=m$ is the last column index. 
It can be verified that this omission does not affect the proof.
The simple reason is that because of the admissible rules of column operations,  entries beyond column $t$ carried by any admissible operations will never affect entires in $\A_{\leq t}$.

Recall that 
$\Y^{i,j}=\A\dotr[\delta_{i,j}]$ for some $(i,j)\in \colop$ and $\X^{k,l}=[\delta_{k,l}]\dotr\A$ for some $(l,k)\in \rowop$ where
\[
\colop=\{(i,j)\mid c_i\rightarrow c_j \mbox{ \textrm{ is an admissible column operation}} \}\subseteq \col(\A)\times \col(\A)  \mbox{ and }
\]
\[
\rowop=\{(l,k)\mid r_l\rightarrow r_k \mbox{ \textrm{ is an admissible row operation}} \}\subseteq \row(\A)\times \row(\A)
\]

Let $\I$ be the identity matrix. We say a matrix $\LL$ is an admissible left multiplication matrix if $\LL=\I+\sum_{\rowop} \alpha_{k,l}[\delta_{k,l}]$ for some $(l,k)\in\rowop, \alpha_{k,l}\in \{0,1\}$. Similarly, we say a matrix $\RR$ is an admissible right multiplication matrix if $\RR=\I+\sum_{\mathcal{\colop}} \beta_{i,j}[\delta_{i,j}]$ for some $(i,j)\in\colop, \beta_{i,j}\in \{0,1\}$.
In short, we just say $\LL$ and $\RR$ are admissible.

It is not difficult to observe the following properties of admissible matrices:

\begin{fact}
Matrix $\A'\sim \A$ is an equivalent matrix transformed from $\A$ by a sequence of admissible operations if and only if $\A'=\LL\A\RR$ for some admissible $\LL$ and $\RR$.
\end{fact}

\begin{fact} \label{fact:admissible_inverse_and_multiplication}
 Admissible matrices are closed under multiplication and taking inverse.
\end{fact}

\begin{fact}\label{fact:admissible_submatrix_invertible}
For any admissible $\LL$, let $S\subseteq \row(\LL)$ be any subset of row indices. Then $\LL|_{S\times S}$ is invertible.
\end{fact}
For the last fact, observe that the matrix $\LL|_{S\times S}$ can be embedded as a block of an admissible matrix $\LL'$ constructed by making all off-diagonal entries of $\LL$ whose indices are not in $S\times S$ to be zero.
The matrix $\LL'$ is obviously admissible. So by the second fact, it is invertible. Also, $\LL'$ can be written in block diagonal form with two blocks $\LL'|_{S\times S} \mbox{ and } \LL'|_{\bar{S}\times \bar{S}}=\I$ where $\bar{S}=\row(\LL')-S$.  Therefore, if $\LL'$ is invertible, so is $\LL|_{S\times S}= \LL'|_{S\times S}$.

\begin{figure}[!htb]
        \centering
         \includegraphics[width=\linewidth, page=2]{./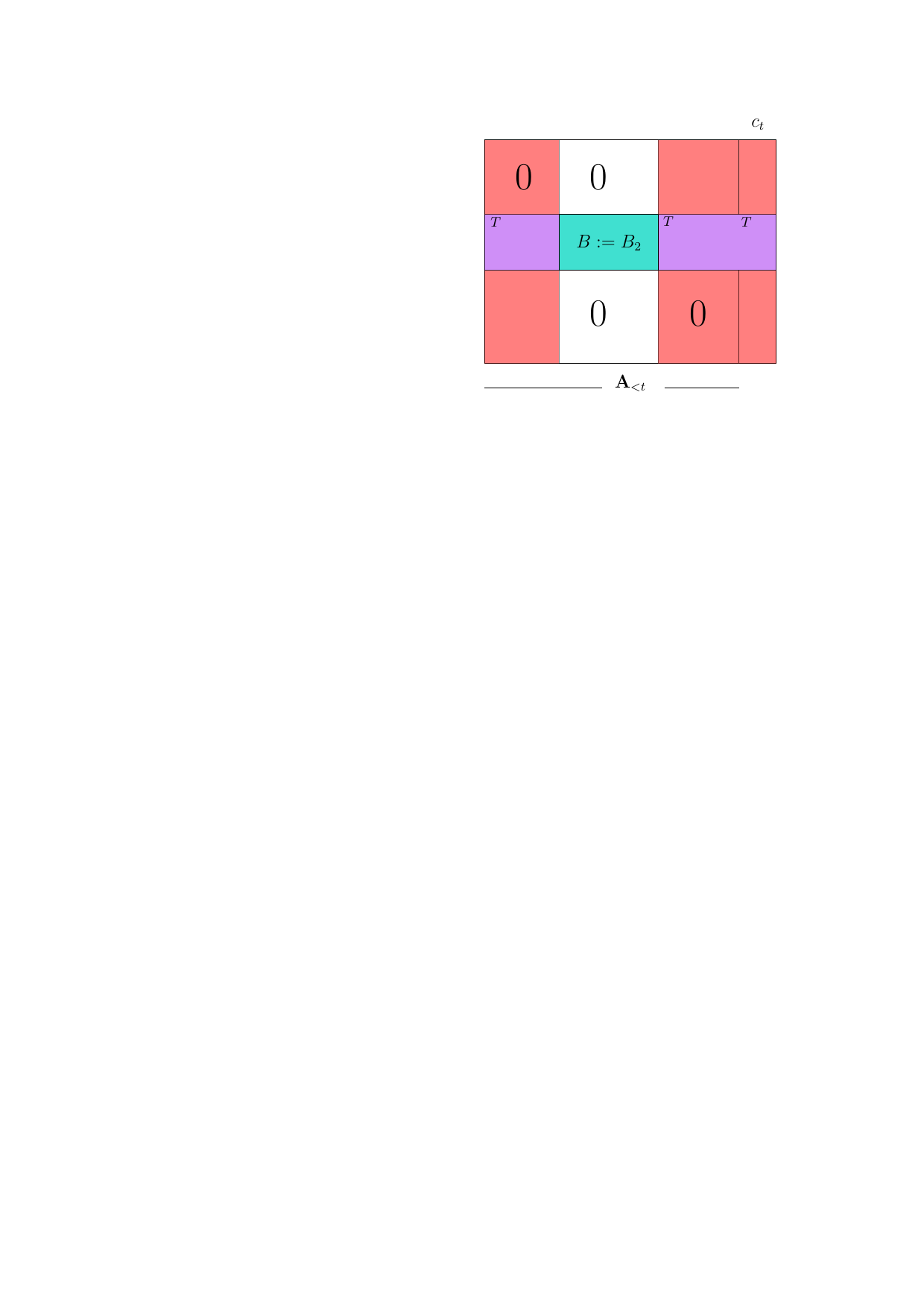}
         \caption{ (Left) $\A$ at iteration $t$ during reduction of the sub-column 
            $c_t|_{\row(B)}$ for the block $B=B_2$. (Right) Target block ${T}$ shown in magenta includes the sub-column of $c_t$. It does not include $B:=B_2$. All rows external to $T$ have zeros in the columns external to ${T}$. All columns external to ${T}$ have zeros in the rows external to ${T}$. Red regions combined form $R$.
         }\label{Fig:incremental}
\end{figure}
We write the matrix $\A$ in the following block forms with respect to $B$ and $T$ with necessary reordering of rows and columns (See Figure~\ref{Fig:incremental} for a simple illustration without reordering rows and columns):

\[
\A =
\left[
\begin{array}{c|c}
R & 0 \\
\hline
T & B
\end{array}
\right]
\]

Here we abuse the notations of block and index block to make the expression more legible. In the above block forms of $\A$, for example, $T$ represents the entries of $\A$ on the index block $T$, that is the block $\A|_{T}$, which is the target block we want to reduce. Note that
\begin{equation*}
\begin{split}
R&=\big[\row(\A)\setminus\row(B),\, \col(\A_{\leq t})\setminus\col(B))\big]\\
&=[\bigoplus_{B_i\neq B} B_i]\cup \big[\row(\A)\setminus\row(B),\, \{t\}\big]
\end{split}
\end{equation*}
which is the block obtained by merging all other previous index blocks together with the sub-column of $t$ excluding entries on $\row(B)$.
The right top block is zero since it belongs to the intersections of rows and columns from different  blocks.


Observe that, the target block $T$ can be reduced to 0 in $\A$ with prior preserved if and only if

\begin{equation}\label{eq:LAR_inverse}
\LL\A \RR:=
    \LL
    \dotr
    \left[
\begin{array}{c|c}
R & 0 \\
\hline
T & B
\end{array}
\right]
\dotr
\RR=
    \left[
\begin{array}{c|c}
R & 0 \\
\hline
0 & B
\end{array}
\right]
\end{equation}
for some admissible $\LL$ and $\RR$.

For $\Leftarrow$ direction, consider 
$\LL= \I+\sum \alpha_{k,l}[\delta_{k,l}]$ and 
$\RR= \I+\sum \beta_{i,j}[\delta_{i,j}]$ 
with binary coefficients $\alpha_{k,l}$'s and $\beta_{i,j}$'s given in Equation~\ref{eq:ops_appendix}. Then, we have
\begin{align}
    \LL\A\RR &= (\I+\sum \alpha_{k,l}[\delta_{k,l}]) \A (\I+\sum \beta_{i,j}[\delta_{i,j}])\\ 
    &= \A + \sum \alpha_{k,l}[\delta_{k,l}]\A + \sum \beta_{i,j}\A[\delta_{i,j}] + 
    \sum \sum \alpha_{k,l}\beta_{i,j}[\delta_{k,l}]\A[\delta_{i,j}] \\ 
    &= \A + \sum \alpha_{k,l}[\delta_{k,l}]\A + \sum \beta_{i,j}\A[\delta_{i,j}] \label{eq:third_eq}\\
    &=\A + \sum \alpha_{k,l}\X^{k,l} + \sum \beta_{i,j}\Y^{i,j} \label{eq:obs2}
\end{align}
The third Equation~(\ref{eq:third_eq}) follows from Observations~\ref{prop:cross_term_0}. After restriction to $T$, by the assumption that $\sum \alpha_{k,l}\X^{k,l} + \sum \beta_{i,j}\Y^{i,j}=\A|_T$, we get $\LL\A\RR|_T=0$. By the definition of independent operations and Observation~\ref{obs:prior}, one can see that our $\LL, \RR$ solves Equation~\ref{eq:LAR_inverse}. 

For $\Rightarrow$, 
we will show that if the above equation is solvable, then there always exist solutions $\LL'$ and $\RR'$ in a simpler forms as stated in the following proposition.

\begin{proposition}\label{lm:linear_comb}
Equation~(\ref{eq:LAR_inverse}) is solvable for some admissible $\LL$ and $\RR$ if and only if it is solvable for some admissible $\LL'$ and $\RR'$ in the following form:

\begin{equation}\label{eq:LLRRsimpleform}
\LL'=
    \left[
\begin{array}{c|c}
I & 0 \\
\hline
U & I
\end{array}
\right]
\mbox{ and }
\RR'=
    \left[
\begin{array}{c|c}
I & 0 \\
\hline
V & I
\end{array}
\right]
\end{equation}

\end{proposition}

Before we prove Proposition~\ref{lm:linear_comb}, we show how one can prove the $\Rightarrow$ direction in Proposition~\ref{prop:block_reduction_linearization} from it. Based on the equivalent condition Equation~\ref{eq:LAR_inverse} and Proposition~\ref{lm:linear_comb}, 
we can write $\LL'$ and $\RR'$ in formula~\ref{eq:LLRRsimpleform} as

\begin{equation*}
\LL'=\I+\sum_{\mathclap{\substack{(l,k)\in\\ \rowop_{R\rightarrow T}}}} \alpha_{k,l}[\delta_{k,l}]
\;\quad
\RR'=\I+\sum_{\mathclap{\substack{(i,j)\in \\ \colop_{B\rightarrow T}}}} \beta_{i,j}[\delta_{i,j}]
\end{equation*}
where $\rowop_{R\rightarrow T}=\{(l,k)\in \rowop\mid (l,k)\in \row(R)\times \row(T)\}$ and $\colop_{B\rightarrow T}=\{(i,j)\in \colop\mid (i,j)\in \col(B)\times \col(T)\}$, and  $\alpha_{k,l}, \beta_{i,j}\in \{0,1\}$. Then, similar to Equation~\ref{eq:obs2}, we get 
\begin{equation*}
\begin{split}
\LL'\A\RR'&=(\I+\sum_{\mathclap{\substack{(l,k)\in\\ \rowop_{R\rightarrow T}}}} \alpha_{k,l}[\delta_{k,l}])\dotr\A\dotr(\I+\sum_{\mathclap{\substack{(i,j)\in \\ \colop_{B\rightarrow T}}}} \beta_{i,j}[\delta_{i,j}])\\
 &=\A+\sum_{\mathclap{\substack{(l,k)\in\\ \rowop_{R\rightarrow T}}}}\alpha_{k,l} [\delta_{k,l}]\dotr \A+\sum_{\mathclap{\substack{(i,j)\in \\ \colop_{B\rightarrow T}}}} \beta_{i,j}\A\dotr [\delta_{i,j}] + \sum_{{\substack{(l,k)\in\\ \rowop_{R\rightarrow T}}}}\sum_{{\substack{(i,j)\in \\ \colop_{B\rightarrow T}}}} \alpha_{k,l}\beta_{i,j} [\delta_{k,l}]\dotr \A\dotr [\delta_{i,j}]\\
 &=\A+\sum_{\mathclap{\substack{(l,k)\in\\ \rowop_{R\rightarrow T}}}}\alpha_{k,l} [\delta_{k,l}]\dotr \A+\sum_{\mathclap{\substack{(i,j)\in \\ \colop_{B\rightarrow T}}}} \beta_{i,j}\A\dotr [\delta_{i,j}]\\
 &=\A+\sum_{\mathclap{\substack{(l,k)\in\\ \rowop_{R\rightarrow T}}}}\alpha_{ k,l }\X^{k,l}+\sum_{\mathclap{\substack{(i,j)\in \\ \colop_{B\rightarrow T}}}} \beta_{i,j}\Y^{i,j}
\end{split}
\end{equation*}



By restriction on $T$ we have
\begin{equation}\label{eq:after_restriction_T}
    \LL'\A\RR'|_{T}=
    =\A|_{T}+\sum_{\mathclap{\substack{(l,k)\in\\ \rowop_{R\rightarrow T}}}}\alpha_{ k,l }\X^{k,l}|_{T}+\sum_{\mathclap{\substack{(i,j)\in \\ \colop_{B\rightarrow T}}}} \beta_{i,j}\Y^{i,j}|_{T}
\end{equation}
With $\LL'\A\RR'|_{T}=0$ by our assumption, we get
\begin{equation*}
    \A|_{T}=\sum_{\mathclap{\substack{(l,k)\in\\ \rowop_{R\rightarrow T}}}}\alpha_{ k,l } \X^{ k,l }|_T+\sum_{\mathclap{\substack{(i,j)\in \\ \colop_{B\rightarrow T}}}} \beta_{i,j}\Y^{i,j}|_T
\end{equation*}
This is exactly what we want
\begin{equation}
    \A|_{{T}}=
\sum_{\mathclap{\substack{l\notin\row(T)\\ k\in\row({T}) }}}\alpha_{k,l} \X^{k,l}|_{{T}}+\sum_{\mathclap{\substack{i\notin\col(T)\\ j\in \col({T}) }}} \beta_{i,j}\Y^{i,j}|_{{T}} 
\end{equation}





\end{proof}

Now we give the proof of Proposition~\ref{lm:linear_comb}.
\begin{proof}[Proof of Proposition~\ref{lm:linear_comb}]
The $\Leftarrow$ direction is trivial.  
For the $\Rightarrow$ direction, 
we want to show that, if Equation~(\ref{eq:LAR_inverse}) is solvable for some admissible $\LL$ and $\RR$, then there exists admissible $\LL'$ and $\RR'$ so that
\[
\LL'=
\left[
\begin{array}{c|c}
I & 0 \\
\hline
U & I
\end{array}
\right],
\RR'=
    \left[
\begin{array}{c|c}
I & 0 \\
\hline
V & I
\end{array}
\right],
\mbox{ and }
\LL^{'}\dotr
    \left[
\begin{array}{c|c}
R & 0 \\
\hline
T & B
\end{array}
\right]
\dotr
\RR^{'}=
 \left[
\begin{array}{c|c}
R & 0 \\
\hline
UR+BV+T & B
\end{array}
\right]=
\left[
\begin{array}{c|c}
R & 0 \\
\hline
0 & B
\end{array}
\right]
\]

We write $\LL$ and $\RR$ in corresponding block forms as follows:
\begin{equation}
\LL=
 \left[
\begin{array}{c|c}
P_1 & P_2 \\
\hline
P_3 & P_4
\end{array}
\right]
\mbox{ and }
\RR=
 \left[
\begin{array}{c|c}
Q_1 & Q_2 \\
\hline
Q_3 & Q_4
\end{array}
\right]
\end{equation}

From Equation~(\ref{eq:LAR_inverse}) one can get a set of equations
\begin{equation}\label{eq:LAR_0}
    P_1 R Q_2 + P_2 B Q_4 = 0
\end{equation}
\begin{equation}\label{eq:LAR_A}
    P_1 R Q_1 + P_2 B Q_3 = R
\end{equation}
\begin{equation}\label{eq:LAR_C}
    P_3 R Q_2 + P_4 B Q_4 = B
\end{equation}
\begin{equation}\label{eq:LAR_B}
    P_3 R Q_1 + P_4 B Q_3 = T
\end{equation}
From Fact~\ref{fact:admissible_submatrix_invertible}, we know that $P_1, P_4, Q_1, Q_4$ are invertible.
By left multiplication with $P_1^{-1}$ and right multiplication with $Q_4^{-1}$ on both sides of Equation~(\ref{eq:LAR_0}), one can get :
\begin{equation} \label{eq:LAR_01}
    P_1^{-1}P_1 R Q_2 Q_4^{-1} + P_1^{-1} P_2 B Q_4 Q_4^{-1} = R Q_2 Q_4^{-1} + P_1^{-1} P_2 B = 0 \implies - R Q_2 Q_4^{-1} =  P_1^{-1} P_2 B
\end{equation}
Similarly, by left multiplication with $P_1^{-1}$ on both sides of Equation~(\ref{eq:LAR_A}) and by right multiplication with $Q_4^{-1}$ on both sides of Equation~(\ref{eq:LAR_C}), one can get the following equations:
\begin{equation}\label{eq:LAR_A1}
        P_1 R Q_1 + P_2 B Q_3 = R \implies   R Q_1  = P_1^{-1}R - P_1^{-1}P_2 B Q_3
\end{equation}
\begin{equation}\label{eq:LAR_C1}
        P_3 R Q_2 + P_4 B Q_4 = B \implies
         P_4 B = B Q_4^{-1} - P_3 R Q_2  Q_4^{-1}
\end{equation}
Now from Equation~\ref{eq:LAR_B}, we have:
\begin{equation*}
\begin{split}
                    &  T  =P_3 R Q_1 + P_4 B Q_3  \\
\xRightarrow[]{\mbox{Equation~\ref{eq:LAR_A1} and ~\ref{eq:LAR_C1}}} \qquad  & T  = P_3 (P_1^{-1}R - P_1^{-1}P_2 B Q_3) + (B Q_4^{-1} - P_3 R Q_2  Q_4^{-1})Q_3\\
& = P_3 P_1^{-1}R + B Q_4^{-1}Q_3 - P_3 P_1^{-1}P_2 B Q_3 - P_3 R Q_2  Q_4^{-1}Q_3 \\
\xRightarrow[]{\mbox{Equation~\ref{eq:LAR_01}}} \qquad   & T =P_3 P_1^{-1}R + B Q_4^{-1}Q_3 - P_3 P_1^{-1}P_2 B Q_3 + P_3 P_1^{-1} P_2 B Q_3 \\
& = P_3 P_1^{-1}R + B Q_4^{-1}Q_3
\end{split}
\end{equation*}
Letting $U=P_3 P_1^{-1}$ and $V=Q_4^{-1}Q_3$, we get the desired equation.
Now we just need to show that $\LL', \RR'$ are both admissible. We prove it for $\RR'$. Similar proof holds for $\LL'$.
We want to show that for any $(i,j)\in \row(V)\times\col(V)$, if $\RR'_{i,j}=1$, then $(i,j)\in \colop$.
From equality, $V=Q_4^{-1}Q_3$, which implies $\RR'_{i,j}=\sum_k (Q_4^{-1})_{i,k} \dotr (Q_3)_{k,j}=1$, we know that $(Q_4^{-1})_{i,k}= (Q_3)_{k,j}=1$ for some $k$. Since $Q_4^{-1} \mbox{ and } Q_3$ are both blocks in the admissible matrix $\RR$, by the definition of admissible left multiplication matrix, we have $(i,k), (k,j)\in \colop$. Note that $\colop$ is closed under transitive relation by Proposition~\ref{prop:transitive relation}. So we have $(i,j)\in \colop$.

\end{proof}

\end{appendices}

\end{document}